\pgfplotsset{compat=1.18}
\theoremstyle{definition}
\newtheorem{theorem}{Theorem}[section]
\newtheorem{proposition}[theorem]{Proposition}
\newtheorem{corollary}[theorem]{Corollary}
\newtheorem{lemma}[theorem]{Lemma}
\newtheorem{question}[theorem]{Question}
\newtheorem{definition}[theorem]{Definition}
\newtheorem{example}[theorem]{Example}
\newtheorem{remark}[theorem]{Remark}
\newtheorem{observation}[theorem]{Observation}
\newcommand{\NN}{\mathbb{N}}
\newcommand{\ZZ}{\mathbb{Z}}
\newcommand{\RR}{\mathbb{R}}
\newcommand{\CC}{\mathbb{C}}
\newcommand{\KK}{\mathbb{K}}
\renewcommand{\leq}{\leqslant}
\newcommand{\YT}{\Tab}
\renewcommand{\Re}{\operatorname{Re}}
\renewcommand{\Im}{\operatorname{Im}}
\newcommand{\setn}[1]{\left\{#1\right\}}
\newcommand{\unlhc}%
{%
  \mathrel{%
    \begin{tikzpicture}[baseline]
      \node (n) at (0,0.1) {$\unlhd$};
      \draw ($(n.south west)+(2mm,0.7mm)$) -- ($(n.north east)+(-2mm,-3.3mm)$);
    \end{tikzpicture}%
  }%
}
\definecolor{cyan}{RGB}{65, 138, 127}
\newcommand{\unrhc}%
{%
  \mathrel{%
    \begin{tikzpicture}[baseline]
      \node (n) at (0,0.1) {$\unrhd$};
      \draw ($(n.south west)+(2mm,0.8mm)$) -- ($(n.north east)+(-2mm,-3.2mm)$);
    \end{tikzpicture}%
  }%
}
\tikzstyle{labeledvertex} = [draw=black, thick, fill=white, inner sep=0.25mm, circle]
\tikzstyle{vertex} = [draw=black, fill=black, inner sep=0.6mm, circle]
\tikzstyle{edge} = [black, thick,-Stealth]
\DeclareMathOperator{\len}{len}
\DeclareMathOperator{\spe}{sp}
\DeclareMathOperator{\sgn}{sgn}
\DeclareMathOperator{\Tab}{Tab}
\begin{document}
\ytableausetup{centertableaux}
\title[]
{
Posets for Specht ideals\\ of essential real reflection groups
}
\author{Sebastian Debus}
\address{Technische Universität Chemnitz, Fakultät für Mathematik, 09107 Chemnitz, Germany}
\email{sebastian.debus@mathematik.tu-chemnitz.de}
\author{Kurt Klement Gottwald}
\address{Technische Universität Chemnitz, Fakultät für Mathematik, 09107 Chemnitz, Germany}
\email{kurt-klement.gottwald@mathematik.tu-chemnitz.de}

\begin{abstract}
Specht ideals are symmetric ideals in the polynomial ring generated by Specht polynomials associated with group representations. These ideals were previously studied for reflection groups of types \( A \) and \( B \), where their inclusion relations and their varieties reflect rich combinatorial structures. In this paper, we extend this theory to type $D$ and the dihedral groups.  
Our results complete the combinatorial study of Specht ideals across all infinite families of essential real reflection groups.
\end{abstract}
\maketitle
\markboth{}{}

\section{Introduction and notation}
A classical construction of the irreducible representations of the symmetric group \( S_n \) in characteristic zero is based on Specht polynomials, which were introduced by Wilhelm Specht~\cite{specht1937Adarstellungstheorie}. 
The ideals generated by the Specht polynomials corresponding to a given irreducible representation are known as Specht ideals (also referred to as Garnir ideals). These ideals are symmetric, i.e. invariant under the group action, and exhibit rich combinatorial, algebraic and geometric properties. 
Notably, Specht ideals are radical \cite{Murai2022Specht, woo2005ideals} and contain the entire isotypic component of the associated irreducible representation in the polynomial ring \cite{moustrou2021symmetric, woo2005ideals}. 
The varieties defined by Specht ideals are unions of intersections of reflection hyperplanes, i.e. unions of elements from the braid arrangement, and whether a point lies in a variety is completely determined by the stabilizer subgroup of the point. \\
Beyond their representation theoretic motivation, Specht ideals have proven to be useful in applications such as solving symmetric systems of polynomial equations, where they help bound the possible symmetries of solutions via orbit types \cite{moustrou2021symmetric}. 
Specht ideals also appear in commutative algebra and algebraic geometry. 
The collection of all Specht polynomials contained in a given Specht ideal forms a universal Gröbner basis \cite{Murai2022Specht, woo2005ideals}, and the ideals have connections to the weak Lefschetz property \cite{McDaniel}. 
Moreover, Specht ideals are used in graph theory to study the independence number of graphs \cite{LiLi,Lovasz} and in the study of symmetric Hilbert schemes of points to classify symmetric ideals \cite{debus2024symmetric}. \\
In type \(A\), corresponding to the symmetric group, Specht ideals exhibit a particularly elegant combinatorial structure. Irreducible representations are labeled by integer partitions, and three naturally arising partially ordered sets turn out to be equivalent: the poset of partitions ordered by dominance, the poset of Specht ideals ordered by inclusion, and the poset of Specht varieties ordered by inclusion as well \cite{moustrou2021symmetric,woo2005ideals}. \\
This remarkable correspondence inspired further investigation into Specht ideals for other reflection groups. 
In type \( B \), the hyperoctahedral group, Specht polynomials were also introduced by Wilhelm Specht in his subsequent work~\cite{specht1937Bdarstellungstheorie}. 
Although the underlying combinatorics are more intricate than in type \( A \), analogous combinatorial and structural properties persist. The Specht ideals in type \( B \) exhibit similar inclusion behavior, and their associated varieties are again determined by orbit types \cite{debus_moustrou_riener_verdure}. \\
These developments naturally raise the question of whether similar algebraic and combinatorial structures exist for Specht ideals of other real reflection groups. 
The essential real reflection groups consist of the four infinite families \( A_{n}, B_n, D_n, I_2(n) \) and six exceptional groups \( H_3, H_4, F_4, E_6, E_7, E_8 \) \cite[Chapter~2]{humphreys1992reflection}. 
This paper focuses on the two remaining previously unexplored infinite families: the dihedral groups \( I_2(n) \) and the even signed symmetric groups \( D_n \).
Our results discover new combinatorial phenomena in reflection group settings and our methods combine combinatorial techniques and algebraic geometry. \\
Our main contributions are the following: For a dihedral group we define the Specht ideal of an irreducible representation such that it contains the respective isotypic component in the polynomial ring (\cref{thm:main2dihedral}). 
This is the desired property in analogy with types $A$ and $B$. 
We completely characterize the inclusions (\cref{thm: main1dihedral}), the decomposition of the varieties as unions of reflection hyperplanes and the few cases of radical Specht ideals (\cref{thm:main2dihedral}).
The largest part of the paper is dedicated to analyzing type $D$ Specht ideals. 
We introduce a new combinatorial partial order on dipartitions (\cref{def:DnPoset}), which encodes inclusion relations among Specht ideals and their varieties (\cref{thm:main}).
Additionally, we characterize the Specht varieties by reducing to type $B$ Specht varieties and orbit types (\cref{prop:Specht varieties I}). 
Moreover, we prove that a uniform description of the Specht varieties in terms of orbit types does not exist (\cref{thm:nounionanalogue}).
Thus, not all strong combinatorial properties of Specht ideals and varieties, which hold in type $A$ and $B$, transfer to the remaining essential real reflection groups.

\subsection*{Outline of the Paper}
In \cref{sec:dihedral groups} we investigate Specht ideals for the dihedral groups. 
We introduce notation for tableaux combinatorics and review Specht ideals of types \( A \) and \( B \) in \cref{sec:pre. for D_n case}. We proceed with defining a partial order on dipartitions adapted to the structure of type \( D \) Specht ideals in \cref{sec:Partial order}.
Then \cref{sec:PosetD_n} is devoted to extending the poset combinatorics to type \( D\) Specht ideals.
Finally, in \cref{sec:conclusion remarks} we present brief concluding remarks and open questions.
In the \hyperref[appendix]{Appendix} we present details that we omitted in the main paper. 
We also include code to compute Specht ideals for types \( A, B \) and \( D \).

\subsection*{Notational convention}
We denote by $\NN=\{0,1,2,\ldots\}$ the natural numbers and for $0\neq n \in \NN$ we write $[n]\coloneqq \{1,2,\ldots,n\}$ for the discrete interval from $1$ to $n$. For a complex number $z=a+\mathrm{i}b$, with $a,b \in \RR$ we write $\Re(z)=a$ for its real part and $\Im(z)=b$ for its imaginary part. For a point $x = (x_1,\ldots,x_n)$ in a vector space we denote by $x^2 := (x_1^2,\ldots,x_n^2)$ the coordinatewise square of $x$.

\section{Dihedral Groups}\label{sec:dihedral groups}
In this section we fix $n \in \NN$. 
The \emph{dihedral group} $I_2(n) \subseteq \operatorname{GL}_2(\RR)$ is the symmetry group of a regular $n$-gon and is generated by the reflection $s$
and the rotation
$r$, where
\[ s =  \begin{pmatrix}
   1  & 0 \\
   0  & -1
\end{pmatrix} ,~ r = \begin{pmatrix}
  \cos(\frac{2\pi}{n})   & -\sin (\frac{2\pi}{n}) \\
  \sin (\frac{2\pi}{n})   & \cos(\frac{2\pi}{n})
\end{pmatrix} . \] 
We refer to \cite[Chapter~1-3]{kane} for background on dihedral groups.
The action of $I_2(n)$ on $\RR^2$ induces an action on the polynomial ring $\RR[x,y]$, where
\[s \cdot x = x, s \cdot y = -y \text{ and } r \cdot x = \cos\left(-\frac{2 \pi }{n}\right)x+\sin\left(-\frac{2 \pi}{n}\right)y, r \cdot y = - \sin\left(-\frac{2 \pi}{n}\right)x + \cos\left(-\frac{2 \pi }{n}\right)y.\] 
It is sometimes useful to consider the actions of $s$ and $r$ on $\CC[x,y] \cong \CC \otimes_\RR \RR[x,y]$, so that
$\sigma \cdot \Re (f(x,y)+\mathrm{i}\cdot g(x,y)) = \sigma \cdot f(x,y)$ and $\sigma \cdot \Im (f(x,y)+ \mathrm{i}\cdot g(x,y)) = \sigma \cdot g(x,y)$ for $f,g \in \RR[x,y]$.
Note that
\begin{align*}
    s\cdot (x+\mathrm{i}y) &= x-\mathrm{i}y,\\
    r \cdot (x+\mathrm{i}y)&= \cos\left(-\frac{2\pi}{n}\right)x-\mathrm{i}\cdot\sin\left(-\frac{2\pi}{n}\right)x+ \sin \left(-\frac{2\pi}{n}\right)y+\mathrm{i}\cdot\cos \left(-\frac{2\pi}{n}\right)y = \mathrm{e}^{\mathrm{i}\frac{2\pi}{n}}x+\mathrm{e}^{\mathrm{i} (\frac{2\pi}{n}+\frac{\pi}{2})}y 
\end{align*}
Thus, for every $k \in \NN$ we have
\begin{align}
r \cdot (x+\mathrm{i}y)^k  =
  (\mathrm{e}^{\mathrm{i}\frac{2\pi}{n}}x+\mathrm{e}^{\mathrm{i} (\frac{2\pi}{n}+\frac{\pi}{2})}y)^k & = \sum_{j=0}^k \mathrm{e}^{\mathrm{i} \frac{2\pi}{n}(k-j)+\mathrm{i}(\frac{2\pi}{n}+\frac{\pi}{2})j}{k \choose j} y^j x^{k-j}  \label{eq: x+iy} \nonumber \\
  & = \sum_{j=0}^k \mathrm{e}^{\mathrm{i} \frac{2\pi}{n}k}{k \choose j} \mathrm{i}^jy^j x^{k-j}  \\
  &  = \mathrm{e}^{\mathrm{i} \frac{2\pi}{n}k}(x+\mathrm{i}y)^k. \nonumber
\end{align}

\begin{remark}
    There is also an action of $I_2(n)$ on $\CC^2$ where the group in $\operatorname{GL}_2(\CC)$ is generated by the reflection $s'$ and the rotation $r'$, where
\[ s' = \begin{pmatrix}
    0 & 1  \\
    1  &  0 
\end{pmatrix},~ r' = \begin{pmatrix}
    \mathrm{e}^{\frac{2\pi}{n}} & 0  \\
    0  &  \mathrm{e}^{-\frac{2\pi}{n}} 
\end{pmatrix}. \]
The diagonal coinvariant space of this action has been investigated in \cite{boij2010notes}.
\end{remark}
Recall that for odd $n$ the dihedral group $I_2(n)$ has $2$ one-dimensional irreducible representations and $ \frac{n-1}{2}$ two-dimensional irreducible representations. 
For even $n$ the group $I_2(n)$ has $4$ one-dimensional irreducible representations and $\frac{n-2}{2}$ two-dimensional irreducible representations (see e.g. \cite[Chapter~5.3]{serre1977linear}). \\
In the following, we first work towards finding an explicit realization of each irreducible representation in $\RR[x,y]$. 
We do so by computing the isotypic decomposition of the coinvariant space of the dihedral group $I_2(n)$.
Recall that the \emph{coinvariant space} of $\RR[x,y]$ for the dihedral group $I_2(n)$ is the quotient space $\RR[x,y]/(\RR[x,y]^{I_2(n)})$, where $(\RR[x,y]^{I_2(n)})$ denotes the ideal generated by the \emph{invariant ring}.
Since $\RR[x,y] = \RR[x,y]^{I_2(n)} \otimes_\RR \RR[x,y]/(\RR[x,y]^{I_2(n)})$ we can obtain an isotypic decomposition of $\RR[x,y]$ by tensoring the coinvariant space with trivial representations from the invariant ring.
We refer to \cite[Chapter~8]{bergeron} for background on coinvariant spaces for reflection groups. \\
Recall that for a reflection group $G$ acting induced on an $n$-variate polynomial ring the invariant ring is generated by $n$ algebraically independent polynomials by the Shepard-Todd-Chevalley theorem \cite[Chapter~18-1]{kane}. 
Such $n$ polynomials are called \emph{fundamental invariants} of $G$.
The fundamental invariants are not unique.
For the dihedral group the fundamental invariants are known.\footnote{See e.g. https://planetmath.org/dihedralgroup (last accessed on 05.06.2025).} 
We include a proof for completeness.

\begin{lemma}\label{lem:fundamental invariants}
 The polynomials $\psi_1(x,y)= x^2+y^2$ and $\psi_2 (x,y) = (x+\mathrm{i}\cdot y)^n+(x-\mathrm{i}\cdot y)^n = 2 \Re (x+\mathrm{i}\cdot y)^n$ are fundamental invariants of $I_2(n)$. 
\end{lemma}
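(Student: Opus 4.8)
The plan is to verify directly that $\psi_1$ and $\psi_2$ are invariant, and then show they are fundamental by a degree count combined with algebraic independence. For invariance of $\psi_1 = x^2 + y^2$: it is the standard quadratic form on $\RR^2$, and $I_2(n) \subseteq O_2(\RR)$ acts orthogonally, so $\psi_1$ is $I_2(n)$-invariant. Concretely, $s$ fixes $x$ and negates $y$, leaving $x^2+y^2$ unchanged; and $r$ acts as a planar rotation, which preserves $x^2+y^2$. For invariance of $\psi_2 = (x+\mathrm{i}y)^n + (x-\mathrm{i}y)^n$: using the computation \eqref{eq: x+iy}, we have $r \cdot (x+\mathrm{i}y)^n = \mathrm{e}^{\mathrm{i}\frac{2\pi}{n} n}(x+\mathrm{i}y)^n = (x+\mathrm{i}y)^n$, and taking complex conjugates (or applying the analogous formula) $r \cdot (x-\mathrm{i}y)^n = (x-\mathrm{i}y)^n$, so $r$ fixes $\psi_2$. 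Also $s \cdot (x+\mathrm{i}y) = x - \mathrm{i}y$, so $s$ swaps the two summands of $\psi_2$ and hence fixes $\psi_2$. Since $s$ and $r$ generate $I_2(n)$, both polynomials lie in $\RR[x,y]^{I_2(n)}$.

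Next I would argue that $\psi_1, \psi_2$ are fundamental invariants. By the Shephard–Todd–Chevalley theorem, $\RR[x,y]^{I_2(n)}$ is a polynomial algebra on two homogeneous generators whose degrees $d_1 \le d_2$ satisfy $d_1 d_2 = |I_2(n)| = 2n$ (the product of the degrees equals the group order). The degrees of $I_2(n)$ are known to be $2$ and $n$ (see \cite[Chapter~17--18]{kane} or the tables in \cite{humphreys1992reflection}); in any case $d_1 = 2$ is forced since $I_2(n)$ has no nonzero linear invariant (it acts without nonzero fixed vectors on $\RR^2$, as it is essential), and then $d_2 = n$. Since $\deg \psi_1 = 2 = d_1$ and $\deg \psi_2 = n = d_2$, it suffices by a standard argument to check that $\psi_1$ and $\psi_2$ are algebraically independent: two homogeneous invariants of the correct degrees that are algebraically independent must generate the whole invariant ring (they span the appropriate graded pieces because a generating set in those degrees is unique up to scalar, and algebraic independence rules out $\psi_2$ being a scalar multiple of $\psi_1^{n/2}$ when $n$ is even, etc.).

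To prove algebraic independence I would compute the Jacobian determinant of $(\psi_1, \psi_2)$ and show it is not identically zero. We have $\partial \psi_1/\partial x = 2x$, $\partial \psi_1/\partial y = 2y$, and writing $\psi_2 = 2\Re(x+\mathrm{i}y)^n$ gives $\partial \psi_2/\partial x = 2n\,\Re(x+\mathrm{i}y)^{n-1}$ and $\partial \psi_2/\partial y = 2n\,\Re\big(\mathrm{i}(x+\mathrm{i}y)^{n-1}\big) = -2n\,\Im(x+\mathrm{i}y)^{n-1}$. Hence the Jacobian equals $-4n\big(x\,\Im(x+\mathrm{i}y)^{n-1} + y\,\Re(x+\mathrm{i}y)^{n-1}\big) = -4n\,\Im\big((x+\mathrm{i}y)(x+\mathrm{i}y)^{n-1}\big) = -4n\,\Im(x+\mathrm{i}y)^{n}$, which is a nonzero polynomial (e.g. it does not vanish at a generic point such as $(x,y)=(1,1)$ for $n \ge 1$). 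A nonvanishing Jacobian forces $\psi_1, \psi_2$ to be algebraically independent over $\RR$, completing the proof.

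The only real subtlety — the ``main obstacle'' — is making the degree bookkeeping airtight: one must know (or cite) that the exponents of $I_2(n)$ are $1$ and $n-1$, equivalently that the invariant degrees are $2$ and $n$, and that the product-of-degrees formula $d_1 d_2 = |G|$ holds. Both are classical facts about the dihedral group as a Coxeter group and are available in \cite{kane, humphreys1992reflection}; alternatively, the Molien series of $I_2(n)$ can be computed directly and shown to equal $\big((1-t^2)(1-t^n)\big)^{-1}$, which pins down the degrees without external input. Everything else is the routine Jacobian calculation sketched above plus the elementary verification of invariance.
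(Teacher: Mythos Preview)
Your proof is correct and follows essentially the same approach as the paper: verify invariance on the generators $s,r$, compute the Jacobian determinant to be $-4n\,\Im(x+\mathrm{i}y)^n$ for algebraic independence, and cite the known invariant degrees $2$ and $n$. The only difference is that you add a bit more justification for why matching the degrees suffices, which the paper leaves implicit.
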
 
\begin{proof}
First, we verify that $\psi_1,\psi_2$ are indeed invariant: 
\begin{align*}
    s \cdot \psi_1 & = x^2+(-y)^2  = \psi_1,  s \cdot \psi_2  = \Re ((x-\mathrm{i}y)^n) = \Re ((x+\mathrm{i}y)^n) = \psi_2, \\
    r \cdot \psi_1 & = (\cos(\frac{-2\pi}{n})x+\sin(\frac{-2\pi}{n})y)^2+(-\sin(\frac{-2\pi}{n})x+\cos(\frac{-2\pi}{n})y)^2  = x^2+y^2 = \psi_1 \\
    r \cdot \psi_2 & = \Re ( \mathrm{e}^{\mathrm{i}2\pi} (x+\mathrm{i}y)^n) = \Re ((x+\mathrm{i}y)^n) = \psi_2.
\end{align*}   
Second, $\psi_1$ and $\psi_2$ are algebraically independent since the determinant of the Jacobian matrix of $\Psi : \RR^2 \to \RR^2, (x,y) \mapsto (\psi_1(x,y),\psi_2(x,y)), $ which equals
\[ \det \begin{pmatrix}
 2x    & 2y \\
 n(x+\mathrm{i}\cdot y)^{n-1} + n(x-\mathrm{i}\cdot y)^{n-1}    &  \mathrm{i}  n(x+\mathrm{i}\cdot y)^{n-1} - \mathrm{i} n(x-\mathrm{i}\cdot y)^{n-1} 
\end{pmatrix}  = -4n \Im (x+\mathrm{i}y)^n\]
does not vanish (see e.g. \cite[Page~12]{Lefschetz} for the Jacobian criterion).\\
Third, the fundamental invariants of the dihedral group $I_2(n)$ have degrees $2$ and $n$ (\cite[Page~9]{humphreys1992reflection}) which shows that $\psi_1, \psi_2$ are indeed fundamental invariants.
\end{proof}

Let $\Delta(x,y) \coloneqq \Im ((x+\mathrm{i}y)^n) \in \RR[x,y]$, which equals, up to a scalar, the determinant of the Jacobian matrix of $(\psi_1,\psi_2)$ from \cref{lem:fundamental invariants}. 
The coinvariant space of $I_2(n)$, i.e. the vector space $\RR[x,y]/(\psi_1,\psi_2)$, is isomorphic to the regular representation of $I_2(n)$.
Note that the coinvariant space is the quotient space of two vector spaces and we realize the coinvariant space as the \emph{space of harmonics} (see \cite[Chapter~8.2]{bergeron}). 
Then, the vector space $\RR[x,y]/(\psi_1,\psi_2)$ can be realized as the space spanned by all (higher order) partial derivatives of $\Delta$. \\
Although the space of harmonics of the dihedral group $I_2(n)$ depends on $n$, we just write $\mathcal{H}$ for the space of harmonics of $I_2(n)$. 
We do so, because we work with a fixed natural number $n$ in this section.

\begin{lemma}
The space of harmonics $\mathcal{H}$ has the vector space basis 
\[\{1, \Delta, \Re((x+\mathrm{i}y)^k), \Im ((x+\mathrm{i}y)^k) : 1 \leq k \leq n-1\}.\]
\end{lemma}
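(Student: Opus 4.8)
\emph{Proof proposal.} The plan is to run a graded dimension count against the Hilbert series of the coinvariant space. By the Shephard--Todd--Chevalley theorem and \cref{lem:fundamental invariants}, the fundamental invariants of $I_2(n)$ have degrees $2$ and $n$, so the coinvariant space $\RR[x,y]/(\psi_1,\psi_2)\cong\mathcal{H}$ has Hilbert series $(1+t)(1+t+\cdots+t^{n-1})$; that is, $\dim\mathcal{H}_0=1$, $\dim\mathcal{H}_k=2$ for $1\le k\le n-1$, $\dim\mathcal{H}_n=1$, and $\dim\mathcal{H}=2n=|I_2(n)|$ (consistent with $\mathcal{H}$ carrying the regular representation). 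The proposed set consists of exactly one degree-$0$ element ($1$), exactly two elements of each degree $k$ with $1\le k\le n-1$ (namely $\Re((x+\mathrm{i}y)^k)$ and $\Im((x+\mathrm{i}y)^k)$), and exactly one degree-$n$ element ($\Delta$). Hence it suffices to show (i) each listed polynomial lies in $\mathcal{H}$, and (ii) the listed polynomials are linearly independent; then being a linearly independent subset of $\mathcal{H}$ of cardinality $\dim\mathcal{H}$, it is a basis.

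For (i), recall from the paragraph preceding the statement that $\mathcal{H}$ is spanned by all partial derivatives of $\Delta=\Im((x+\mathrm{i}y)^n)$. Since $\partial_x(x+\mathrm{i}y)^n=n(x+\mathrm{i}y)^{n-1}$ and $\partial_y(x+\mathrm{i}y)^n=\mathrm{i}n(x+\mathrm{i}y)^{n-1}$, iteration gives, for all $a,b\ge 0$ with $a+b\le n$,
\[\partial_x^{a}\partial_y^{b}\,\Delta=\Im\!\big(\partial_x^{a}\partial_y^{b}(x+\mathrm{i}y)^n\big)=\frac{n!}{(n-a-b)!}\,\Im\!\big(\mathrm{i}^{\,b}\,(x+\mathrm{i}y)^{\,n-a-b}\big).\]
Taking $(a,b)=(0,0)$ recovers $\Delta$; taking $(a,b)=(n-k,0)$ for $1\le k\le n-1$ gives a nonzero multiple of $\Im((x+\mathrm{i}y)^k)$; taking $(a,b)=(n-k-1,1)$ for $1\le k\le n-1$ gives $\tfrac{n!}{k!}\Im(\mathrm{i}(x+\mathrm{i}y)^k)=\tfrac{n!}{k!}\Re((x+\mathrm{i}y)^k)$; and taking $(a,b)=(n-1,1)$ gives $\Im(\mathrm{i}\cdot n!)=n!$, so the constant $1$ also lies in $\mathcal{H}$. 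This accounts for all $2n$ listed polynomials.

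For (ii), the listed polynomials are homogeneous of pairwise distinct degrees, except within each pair $\{\Re((x+\mathrm{i}y)^k),\Im((x+\mathrm{i}y)^k)\}$ for $1\le k\le n-1$. But $\Re((x+\mathrm{i}y)^k)$ involves only even powers of $y$ and $\Im((x+\mathrm{i}y)^k)$ only odd powers of $y$ (and both are nonzero, containing the monomials $x^k$ and $kx^{k-1}y$ respectively), so each such pair is linearly independent; hence the whole family is. I do not expect a serious obstacle here: the only content is the derivative identity and its comparison with the graded Hilbert series. The one point requiring a little care is that the constant $1$ must be produced by a \emph{mixed} derivative $\partial_x^{n-1}\partial_y\Delta$, since $\partial_x^{n}\Delta=\Im(n!)=0$.
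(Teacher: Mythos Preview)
Your proof is correct and follows essentially the same approach as the paper: both argue by a dimension count ($\dim\mathcal{H}=2n$) together with showing that the listed polynomials lie in $\mathcal{H}$ via partial derivatives of $\Delta$. The paper computes the first-order derivatives $\partial_x\Delta=n\Im((x+\mathrm{i}y)^{n-1})$ and $\partial_y\Delta=n\Re((x+\mathrm{i}y)^{n-1})$ and then iterates, whereas you write down the general formula $\partial_x^a\partial_y^b\Delta$ at once; your explicit linear-independence argument (even versus odd powers of $y$) fills in a step the paper simply asserts.
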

\begin{proof}
The dimension of $\RR[I_2(n)]\cong \mathcal{H}$ is $2n$ which is also the dimension of the space $\mathcal{L}$ spanned by $\{1, \Delta, \Re((x+\mathrm{i}y)^k), \Im ((x+\mathrm{i}y)^k) : 1 \leq k \leq n-1\}$. 
It is clear that $1,\Delta \in \mathcal{H}$.
Moreover, we find
\begin{align*}
    \frac{\partial \Delta}{\partial x} & = -\frac{\mathrm{i}n}{2}  ((x+\mathrm{i}y)^{n-1}-(x-\mathrm{i}y)^{n-1})  = n\Im ((x+\mathrm{i}y)^{n-1}) \\
    \frac{\partial \Delta}{\partial y} & = \frac{n}{2} ((x+\mathrm{i}y)^{n-1}+ (x-\mathrm{i}y)^{n-1}) = n \Re ((x+\mathrm{i}y)^{n-1}).
\end{align*}
which, iteratively applied, shows that we have $\mathcal{L} \subseteq \mathcal{H}$ and thus $\mathcal{L} = \mathcal{H}$.
\end{proof}
We write $\mathcal{H}_k $ for the intersection of the homogeneous part of degree $k$ in $\RR[x,y]$ with $\mathcal{H}$. 
Since we consider a linear action, which preserves the degrees of polynomials and $\mathcal{H}_k$ is of dimension $2$, it follows that every two-dimensional irreducible representation of $I_2(n)$ can be realized as some space $\mathcal{H}_k$ for $1 \leq k \leq n-1$.
If $n$ is even we have $r \cdot (x+\mathrm{i}y)^{\frac{n}{2}} = - (x+\mathrm{i}y)^{\frac{n}{2}}$ which follows from \cref{eq: x+iy} since $\mathrm{e}^{\mathrm{i}\pi}=-1$. 
Thus, the vector spaces $\langle \Re((x+\mathrm{i}y)^{\frac{n}{2}})\rangle$ and $\langle \Im((x+\mathrm{i}y)^{\frac{n}{2}})\rangle$ are closed under the action of $I_2(n)$.
Therefore, $\mathcal{H}_{\frac{n}{2}}$ decomposes into two non-isomorphic one-dimensional irreducible representations.

\begin{lemma}
For $1 \leq k \leq \lfloor \frac{n-1}{2} \rfloor$ the representations $\mathcal{H}_k $ and $\mathcal{H}_{n-k}$ are isomorphic.
\end{lemma}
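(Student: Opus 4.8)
The plan is to exhibit an explicit $I_2(n)$-equivariant linear isomorphism $\mathcal{H}_k \to \mathcal{H}_{n-k}$. First I would pass to the complexifications $\CC\otimes_\RR\mathcal{H}_k$ and $\CC\otimes_\RR\mathcal{H}_{n-k}$, in which $(x\pm\mathrm{i}y)^k$ and $(x\pm\mathrm{i}y)^{n-k}$ form eigenbases for the rotation. By \eqref{eq: x+iy} we have $r\cdot(x+\mathrm{i}y)^k=\mathrm{e}^{\mathrm{i}\frac{2\pi}{n}k}(x+\mathrm{i}y)^k$, and applying complex conjugation (which commutes with the $I_2(n)$-action since that action is defined over $\RR$) gives $r\cdot(x-\mathrm{i}y)^k=\mathrm{e}^{-\mathrm{i}\frac{2\pi}{n}k}(x-\mathrm{i}y)^k$; moreover $s\cdot(x+\mathrm{i}y)^k=(x-\mathrm{i}y)^k$. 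The key numerical observation is that $\mathrm{e}^{\mathrm{i}\frac{2\pi}{n}(n-k)}=\mathrm{e}^{-\mathrm{i}\frac{2\pi}{n}k}$, so on $\CC\otimes_\RR\mathcal{H}_{n-k}$ the rotation $r$ acts on $(x+\mathrm{i}y)^{n-k}$ by $\mathrm{e}^{-\mathrm{i}\frac{2\pi}{n}k}$ and on $(x-\mathrm{i}y)^{n-k}$ by $\mathrm{e}^{\mathrm{i}\frac{2\pi}{n}k}$ — the same pair of scalars that occurs for $\mathcal{H}_k$, but attached to the oppositely-signed eigenvectors.

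Next I would define $\varphi\colon\CC\otimes_\RR\mathcal{H}_k\to\CC\otimes_\RR\mathcal{H}_{n-k}$ on these bases by $(x+\mathrm{i}y)^k\mapsto(x-\mathrm{i}y)^{n-k}$ and $(x-\mathrm{i}y)^k\mapsto(x+\mathrm{i}y)^{n-k}$, i.e. matching each $r$-eigenline of $\mathcal{H}_k$ with the $r$-eigenline of $\mathcal{H}_{n-k}$ for the \emph{same} eigenvalue while keeping track of the $s$-swap. Two verifications remain. First, $\varphi$ commutes with complex conjugation (conjugation swaps $(x+\mathrm{i}y)^m\leftrightarrow(x-\mathrm{i}y)^m$ for $m\in\{k,n-k\}$, and one checks both ways around the square agree), hence $\varphi$ restricts to a real linear isomorphism $\mathcal{H}_k\to\mathcal{H}_{n-k}$; concretely it sends $\Re((x+\mathrm{i}y)^k)\mapsto\Re((x+\mathrm{i}y)^{n-k})$ and $\Im((x+\mathrm{i}y)^k)\mapsto-\Im((x+\mathrm{i}y)^{n-k})$. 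Second, $\varphi$ is equivariant: since $I_2(n)=\langle r,s\rangle$ it suffices to check $\varphi(r\cdot v)=r\cdot\varphi(v)$ and $\varphi(s\cdot v)=s\cdot\varphi(v)$ for $v$ in the eigenbasis, which is immediate from the scalars and swaps recorded in the previous paragraph.

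As an alternative worth a one-line remark: for $1\le k\le\lfloor\frac{n-1}{2}\rfloor$ one has $2k\neq n\neq 2(n-k)$, so both $\mathcal{H}_k$ and $\mathcal{H}_{n-k}$ are $2$-dimensional irreducible representations (neither splits off one-dimensional pieces as $\mathcal{H}_{n/2}$ does), and a $2$-dimensional irreducible representation of $I_2(n)$ is determined up to isomorphism by $\operatorname{tr}(r)=2\cos\!\big(\tfrac{2\pi k}{n}\big)=2\cos\!\big(\tfrac{2\pi(n-k)}{n}\big)$; this yields the isomorphism without naming a map.

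I do not anticipate a genuine obstacle; the only point needing a little care is the bookkeeping with the real structure, namely confirming that the complex map $\varphi$ descends to a morphism of real representations.
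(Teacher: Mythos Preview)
Your proposal is correct and essentially the same as the paper's argument: both exhibit the explicit equivariant isomorphism $\Re((x+\mathrm{i}y)^k)\mapsto\Re((x+\mathrm{i}y)^{n-k})$, $\Im((x+\mathrm{i}y)^k)\mapsto-\Im((x+\mathrm{i}y)^{n-k})$ by verifying that $r$ and $s$ act by identical matrices on suitably chosen bases, the only cosmetic difference being that you diagonalize $r$ over the complexification first while the paper stays with the real basis $(\Re,\Im)$ throughout. Your character-theoretic alternative is a valid shortcut the paper does not use.
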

\begin{proof}
Let $p_k = \Re ((x+\mathrm{i}y)^k), q_k = -\Im ((x+\mathrm{i}y)^k)$ and $p_{n-k} = \Re ((x+\mathrm{i}y)^{n-k}), q_{n-k} = \Im ((x+\mathrm{i}y)^{n-k})$.
If $\mathrm{e}^{\mathrm{i}\frac{2\pi}{n}(n-k)} = a+ \mathrm{i} b$ is a unit root, its inverse equals $\mathrm{e}^{-\mathrm{i}\frac{2\pi}{n}(n-k)} = a- \mathrm{i} b$ and we find, using \cref{eq: x+iy},
\[r \cdot (x+\mathrm{i}y)^k = (a+ \mathrm{i} b)(x+\mathrm{i}y)^k \text{ and } r \cdot (x+\mathrm{i}y)^{n-k} = (a- \mathrm{i} b)(x+\mathrm{i}y)^{n-k} .\]
Moreover, $s \cdot p_k = p_k, s \cdot q_k = -q_k, s \cdot p_{n-k} =  p_{n-k}, s \cdot q_{n-k} =  -q_{n-k}$ and 
\begin{align*}
    r \cdot p_k & = \Re ((a+\mathrm{i}b)(x+\mathrm{i}y)^k) && = ap_k+bq_k, \\
    r \cdot p_{n-k}  & =  \Re ((a-\mathrm{i}b)(x+\mathrm{i}y)^{n-k})   && = ap_{n-k}+bq_{n-k}, \\
    r \cdot q_k & = \Im ((a+\mathrm{i}b)(x+\mathrm{i}y)^k) && = -bp_k+aq_k, \\
    r \cdot q_{n-k} & = \Im ((a-\mathrm{i}b)(x+\mathrm{i}y)^{n-k}) && = -bp_{n-k}+aq_{n-k},
\end{align*}
which shows that the map $\mathcal{H}_k \longrightarrow \mathcal{H}_{n-k}, p_k \mapsto p_{n-k}, q_k \mapsto q_{n-k}$ is $I_2(n)$-equivariant and thus an isomorphism of irreducible representations.
\end{proof}

    We denote the irreducible representations of $I_2(n)$ from now on by $V_0, V_1, \ldots, V_{\lfloor \frac{n-1}{2} \rfloor}, V_n$ and (if $n$ is even) also by $\Re V_{\frac{n}{2}}, \Im V_{\frac{n}{2}}$.

\begin{definition}
For $k \in \{0,1,\ldots,\lfloor \frac{n-1}{2} \rfloor, n\}$ and $V_k$, or $\Re V_{\frac{n}{2}}$/$\Im V_{\frac{n}{2}}$ an irreducible representation, the associated $I_2(n)$-\emph{Specht ideal} is the ideal $\mathbf{I}_k$ or $\mathbf{I}_{\frac{n}{2}}^{\Re}$/$\mathbf{I}_{\frac{n}{2}}^{\Im}$ in $\RR[x,y]$ generated by the vector space $\mathcal{H}_k$ or $\Re((x+\mathrm{i}y)^{\frac{n}{2}})$/$\Im((x+\mathrm{i}y)^{\frac{n}{2}})$. 
The associated $I_2(n)$-Specht variety is the associated variety of the Specht ideal in $\RR^2$.
\end{definition}
By a result of Steinberg \cite{steinberg1960invariants}, the polynomial $\Delta$ factors into a product of $n$ homogeneous linear polynomials whose zero sets are the reflection hyperplanes of $I_2(n)$ (see also \cite[Chapter~21-1]{kane}).
For instance, for $n=3$ we have \[\Delta = \Im((x+\mathrm{i}y)^3) = y(\sqrt{3}x-y)(\sqrt{3}x+y).\]
In particular, $V(\mathbf{I}_n)$ is the union of all reflection hyperplanes of $I_2(n)$ (see \cref{fig:reflection hyerplanes for I_2(n)} for $n=3$ and $n=8$).

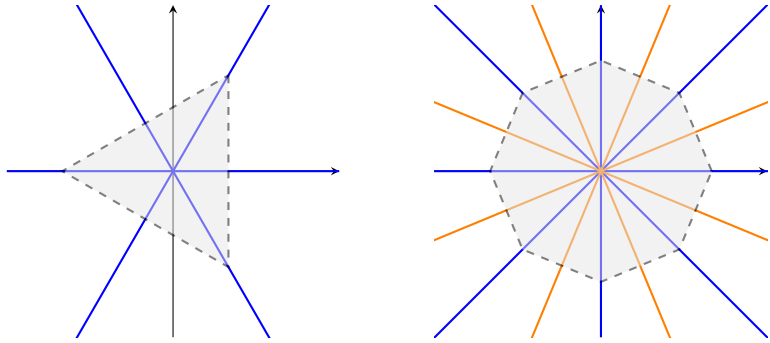
\begin{figure}[h!] 
\centering

\begin{tikzpicture}
\begin{axis}[
    axis lines=middle,
    xmin=-3, xmax=3,
    ymin=-3, ymax=3,
    width=6cm,
    height=6cm,
    ticks=none,
]
\addplot[domain=-3:3, blue, thick] {0};
\addplot[domain=-3:3, blue, thick] {sqrt(3)*x};
\addplot[domain=-3:3, blue, thick] {-sqrt(3)*x};

\addplot[draw=black, dashed, thick, fill=gray!20, opacity=0.5] coordinates {
    (1,{sqrt(3)}) (1,-{sqrt(3)}) (-2,0) (1,{sqrt(3)})
};
\end{axis}
\end{tikzpicture}
\hspace{1cm}
\begin{tikzpicture}
\begin{axis}[
    axis lines=middle,
    xmin=-3, xmax=3,
    ymin=-3, ymax=3,
    width=6cm,
    height=6cm,
    ticks=none,
]
\addplot[domain=-4:4, blue, thick] {0}; 
\addplot[domain=-4:4, blue, thick] coordinates {(0,-3)(0,3)}; 

\addplot[domain=-4:4, blue, thick] {x}; 
\addplot[domain=-4:4, blue, thick] {-x}; 
\addplot[domain=-4:4, orange, thick] {(1 + sqrt(2))*x};
\addplot[domain=-4:4, orange, thick] {(sqrt(2) - 1)*x};
\addplot[domain=-4:4, orange, thick] {(1 - sqrt(2))*x};
\addplot[domain=-4:4, orange, thick] {-(1 + sqrt(2))*x};

\addplot[draw=black, dashed, thick, fill=gray!20, opacity=0.5] coordinates {
    (2,0) 
    (1.4142, 1.4142) 
    (0,2) 
    (-1.4142, 1.4142) 
    (-2,0) 
    (-1.4142, -1.4142) 
    (0, -2) 
    (1.4142, -1.4142) 
    (2,0)
};
\end{axis}
\end{tikzpicture}

\caption{The reflection hyperplanes for $I_2(3)$ and $I_2(8)$ together with corresponding regular $n$-gons}
\label{fig:reflection hyerplanes for I_2(n)}
\end{figure}
Observe that for even $n = 2m$ the polynomial $\Delta$ factors into $\Delta = 2\Re (x+\mathrm{i}y)^m \cdot \Im (x+\mathrm{i}y)^m$ since $(a+\mathrm{i}b)^2 = a^2-b^2 + 2\mathrm{i}ab$ for real numbers $a,b$. 
This implies that the variety of $\Re ((x+\mathrm{i}y)^k)$ is the union of the reflection hyperplanes of $I_2(2k)$ that are not reflection hyperplanes of $I_2(k)$.
The reflection hyperplanes of $I_2(8)$ which are not reflection hyperplanes of $I_2(4)$ are colored orange in \cref{fig:reflection hyerplanes for I_2(n)}. \\
Thus, we immediately find that the variety of the ideal $\mathbf{I}_k$ equals $\{(0,0)\}$ for $1 \leq k \leq \lfloor \frac{n-1}{2} \rfloor$, since the reflection hyperplanes of $I_2(n)$ intersect only in the origin.

\begin{theorem}\label{thm: main1dihedral}
The Specht variety of the dihedral group $I_2(n)$ is one of the following:
\begin{enumerate}
    \item $\emptyset$ for $V_0$ the trivial representation.
    \item $\{(0,0)\}$ for $V_k$ and $1 \leq k \leq \lfloor \frac{n-1}{2} \rfloor$.
    \item The union of all reflection hyperplanes of $I_2(\frac{n}{2})$ for $\Im V_{\frac{n}{2}}$.
    \item The union of all reflection hyperplanes of $I_2(n)$ which are not reflection hyperplanes of $I_2(\frac{n}{2})$ for $\Re V_{\frac{n}{2}}$.
    \item The union of all reflection hyperplanes of $I_2(n)$ for $V_n$.  
\end{enumerate}
\end{theorem}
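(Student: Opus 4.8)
The plan is a straightforward case-by-case verification. For each of the five cases I first identify the generators of the Specht ideal among the basis of $\mathcal H$ established above, and then read off the variety from the factorization properties of $\Re((x+\mathrm{i}y)^k)$ and $\Im((x+\mathrm{i}y)^k)$ recorded before the theorem.

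Cases (1) and (2) give the ``empty set or origin'' outcomes. Since $\mathcal H_0=\langle 1\rangle$ we have $\mathbf I_0=\RR[x,y]$, whose variety is empty. For $1\leq k\leq\lfloor\frac{n-1}{2}\rfloor$ the ideal $\mathbf I_k$ is generated by $\mathcal H_k=\langle\Re((x+\mathrm{i}y)^k),\Im((x+\mathrm{i}y)^k)\rangle$; both generators are homogeneous of positive degree, so $(0,0)\in V(\mathbf I_k)$. Conversely, $(x-\mathrm{i}y)^k$ is the coefficientwise complex conjugate of $(x+\mathrm{i}y)^k$ and hence equals $\Re((x+\mathrm{i}y)^k)-\mathrm{i}\,\Im((x+\mathrm{i}y)^k)$ in $\RR[x,y]$, so multiplying by $(x+\mathrm{i}y)^k$ gives the identity $\Re((x+\mathrm{i}y)^k)^2+\Im((x+\mathrm{i}y)^k)^2=(x^2+y^2)^k\in\mathbf I_k$; every real point of $V(\mathbf I_k)$ then satisfies $x^2+y^2=0$ and must be the origin. (Working over $\RR^2$ is essential here.)

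For the three ``reflection hyperplane'' cases I invoke Steinberg's theorem for the appropriate group. In case (5), the basis of $\mathcal H$ recorded above contains exactly one element of degree $n$, namely $\Delta=\Im((x+\mathrm{i}y)^n)$, so $\mathcal H_n=\langle\Delta\rangle$ and $\mathbf I_n=(\Delta)$; by Steinberg's theorem $\Delta$ is, up to a nonzero scalar, the product of the $n$ linear forms defining the reflection hyperplanes of $I_2(n)$, and its variety is their union. For (3) and (4) put $n=2m$. The ideal $\mathbf I_{\frac n2}^{\Im}$ is generated by $\Im((x+\mathrm{i}y)^m)$, which, by Steinberg's theorem now applied to $I_2(m)=I_2(\frac n2)$, is up to scalar the product of the $m$ linear forms defining the reflection hyperplanes of $I_2(\frac n2)$; this is case (3). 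For $\mathbf I_{\frac n2}^{\Re}=(\Re((x+\mathrm{i}y)^m))$ I use the factorization $\Delta=2\,\Re((x+\mathrm{i}y)^m)\,\Im((x+\mathrm{i}y)^m)$ recorded before the theorem: since the $n$ linear factors of $\Delta$ are pairwise non-proportional, $\Re((x+\mathrm{i}y)^m)$ is, up to scalar, the product of exactly the $n-m=m$ of them corresponding to hyperplanes of $I_2(n)$ that are not hyperplanes of $I_2(\frac n2)$, which is case (4).

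I do not expect any genuine obstacle: the theorem is essentially an assembly of facts already prepared in the text. The points that need care are purely bookkeeping --- getting the exact generator list of each $\mathbf I_\bullet$ right (in particular that $\mathcal H_n=\langle\Delta\rangle$ is one-dimensional, whereas $\mathcal H_k$ for $1\leq k\leq\lfloor\frac{n-1}{2}\rfloor$ is the full two-dimensional real span), and applying Steinberg's theorem to the correct group so that the factorization into pairwise distinct real linear forms identifies the reflection hyperplanes without ambiguity.
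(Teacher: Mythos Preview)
Your proof is correct and follows essentially the same case-by-case verification the paper carries out in the discussion immediately preceding the theorem (the paper does not give a separate proof environment). The one minor divergence is in case~(2): the paper argues geometrically that $V(\mathbf I_k)=V(\Re((x+\mathrm{i}y)^k))\cap V(\Im((x+\mathrm{i}y)^k))$ is an intersection of two disjoint finite unions of lines through the origin and hence equals $\{(0,0)\}$, whereas you use the cleaner algebraic identity $\Re((x+\mathrm{i}y)^k)^2+\Im((x+\mathrm{i}y)^k)^2=(x^2+y^2)^k$ to force $x^2+y^2=0$ on real points.
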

Moreover, the Specht ideals of the dihedral group are totally ordered with respect to inclusion for odd $n$ and otherwise ``almost'' totally ordered.
\begin{theorem}\label{thm:main2dihedral}
The Specht ideals of the dihedral group $I_2(n)$ are ordered as follows:
\[
    \mathbf{I}_0 \supsetneq \mathbf{I}_1 \supsetneq \ldots \supsetneq \mathbf{I}_{\lfloor \frac{n-1}{2 } \rfloor} \supsetneq  \mathbf{I}_{\frac{n}{2}}^{\Re} / \mathbf{I}_{\frac{n}{2}}^{\Im} \supsetneq \mathbf{I}_n
\]
where $\mathbf{I}_{\frac{n}{2}}^{\Re} $ and $\mathbf{I}_{\frac{n}{2}}^{\Im}$ are incomparable but both contain $\mathbf{I}_n$. 
Moreover, the isotypic component of any irreducible representation in $\RR[x,y]$ is contained in the associated Specht ideal.
The only radical Specht ideals are $\mathbf{I}_0, \mathbf{I}_1, \mathbf{I}_n$ and $\mathbf{I}_{\frac{n}{2}}^{\Re}, \mathbf{I}_{\frac{n}{2}}^{\Im}$ if $n$ is even.
\end{theorem}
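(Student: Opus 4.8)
I first record the Specht ideals explicitly. From the basis of $\mathcal H$ and \cref{lem:fundamental invariants} one reads off $\mathbf{I}_0=(1)=\RR[x,y]$, $\mathbf{I}_1=(x,y)$, and for $1\le k\le\lfloor\frac{n-1}{2}\rfloor$ the ideal $\mathbf{I}_k=(\Re((x+\mathrm{i}y)^k),\Im((x+\mathrm{i}y)^k))$, whose extension to $\CC[x,y]$ is $((x+\mathrm{i}y)^k,(x-\mathrm{i}y)^k)$; likewise $\mathbf{I}_n=(\Delta)=(\Im((x+\mathrm{i}y)^n))$, and for even $n=2m$ we have $\mathbf{I}_{m}^{\Re}=(\Re((x+\mathrm{i}y)^m))$, $\mathbf{I}_{m}^{\Im}=(\Im((x+\mathrm{i}y)^m))$ together with the factorization $\Delta=2\,\Re((x+\mathrm{i}y)^m)\,\Im((x+\mathrm{i}y)^m)$ noted in the text. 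Since all of these ideals are generated by real polynomials, each equals the intersection of $\RR[x,y]$ with its extension to $\CC[x,y]$; this lets me test membership over $\CC$ and conclude that real and imaginary parts of complex members lie back in the real ideal, a device I use several times.

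For the inclusion chain I would use the identity $(x\pm\mathrm{i}y)^{j+1}=(x\pm\mathrm{i}y)(x\pm\mathrm{i}y)^{j}$: it gives $\mathbf{I}_{k+1}\subseteq\mathbf{I}_k$, and $\mathbf{I}_n\subseteq\mathbf{I}_{\lfloor\frac{n-1}{2}\rfloor}$ for odd $n$, and $\mathbf{I}_m^{\Re},\mathbf{I}_m^{\Im}\subseteq\mathbf{I}_{m-1}$ for even $n$; the containments $\mathbf{I}_n\subseteq\mathbf{I}_m^{\Re}\cap\mathbf{I}_m^{\Im}$ follow at once from $\Delta=2\,\Re((x+\mathrm{i}y)^m)\,\Im((x+\mathrm{i}y)^m)$. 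Strictness is a homogeneity/degree comparison: each ideal is graded, $\mathbf{I}_k$ is generated in degree $k$ with nonzero degree-$k$ part $\mathcal H_k$, while the next ideal down the chain is generated in strictly higher degree, so a minimal-degree generator of the larger ideal cannot lie in the smaller one (for $\mathbf{I}_m^{\Re}\supsetneq\mathbf{I}_n$ note $\deg\Re((x+\mathrm{i}y)^m)=m<n$). Finally $\mathbf{I}_m^{\Re}$ and $\mathbf{I}_m^{\Im}$ are incomparable because $\Re((x+\mathrm{i}y)^m)$ and $\Im((x+\mathrm{i}y)^m)$ are nonproportional homogeneous polynomials of equal degree with no common factor (by Steinberg they cut out disjoint sets of lines), so neither divides the other.

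For the isotypic statement I would invoke the decomposition $\RR[x,y]\cong\RR[\psi_1,\psi_2]\otimes_\RR\mathcal H$ from the discussion of harmonics, with $I_2(n)$ acting trivially on the invariant factor; hence the $W$-isotypic component of $\RR[x,y]$ is $\RR[\psi_1,\psi_2]$ times the $W$-isotypic component of the regular representation $\mathcal H$. For each one-dimensional $W$ — the trivial representation, $V_n$, and $\Re V_{n/2},\Im V_{n/2}$ when $n$ is even — that component of $\mathcal H$ is precisely the line spanned by the generator of the associated Specht ideal, so the claim is immediate. For a two-dimensional $V_k$, the preceding lemmas identify the $V_k$-isotypic component of $\mathcal H$ as $\mathcal H_k\oplus\mathcal H_{n-k}$ (the two-dimensional $V_j$ being pairwise non-isomorphic), and it remains to check $\mathcal H_{n-k}\subseteq\mathbf{I}_k$; this holds because $(x\pm\mathrm{i}y)^{n-k}=(x\pm\mathrm{i}y)^{n-2k}(x\pm\mathrm{i}y)^{k}$ with $n-2k>0$, so both real and imaginary parts of $(x+\mathrm{i}y)^{n-k}$ lie in $\mathbf{I}_k$.

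For radicality, $\mathbf{I}_0=(1)$ and $\mathbf{I}_1=(x,y)$ are visibly radical, and $\mathbf{I}_n=(\Delta)$ and (for even $n$) $\mathbf{I}_m^{\Re},\mathbf{I}_m^{\Im}$ are principal ideals whose generators, by Steinberg's factorization of $\Delta$ into the pairwise non-proportional linear forms cutting the reflection hyperplanes together with $\Delta=2\,\Re((x+\mathrm{i}y)^m)\,\Im((x+\mathrm{i}y)^m)$, are squarefree, hence generate radical ideals. For $2\le k\le\lfloor\frac{n-1}{2}\rfloor$ the ideal $\mathbf{I}_k$ is not radical: $(x^2+y^2)^{k}=(x+\mathrm{i}y)^{k}(x-\mathrm{i}y)^{k}$ lies in $\mathbf{I}_k$ (it is real and lies in the complex extension), while $x^2+y^2\notin\mathbf{I}_k$ — for $k>2$ because $\mathbf{I}_k$ has vanishing degree-$2$ part, and for $k=2$ because $\mathcal H_2=\langle x^2-y^2,xy\rangle$ does not contain $x^2+y^2$. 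I expect the only genuine friction to be bookkeeping: pinning down which representation occupies which graded piece of $\mathcal H$ (including the split at degree $n/2$ for even $n$), keeping the odd/even cases and the harmless label collisions for small $n$ straight, and citing Steinberg's factorization of $\Delta$, which is the one external input doing real work.
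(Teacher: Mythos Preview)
Your proof is correct and follows the same conceptual path as the paper, but with a tidier packaging. The paper proves $\mathbf{I}_{k+1}\subseteq\mathbf{I}_k$ by explicitly verifying, via binomial-coefficient manipulations, that $x\,\Re((x+\mathrm{i}y)^k)-y\,\Im((x+\mathrm{i}y)^k)=\Re((x+\mathrm{i}y)^{k+1})$ and $y\,\Re((x+\mathrm{i}y)^k)+x\,\Im((x+\mathrm{i}y)^k)=\Im((x+\mathrm{i}y)^{k+1})$; your complexification device $(x\pm\mathrm{i}y)^{k+1}=(x\pm\mathrm{i}y)(x\pm\mathrm{i}y)^k$ together with $I=I^e\cap\RR[x,y]$ encodes precisely these identities without the expansion. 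For non-radicality the paper appeals to \cref{thm: main1dihedral} (the variety is $\{0\}$ while the ideal is properly contained in $(x,y)=\mathbf{I}_1$), whereas you produce the explicit witness $x^2+y^2\in\sqrt{\mathbf{I}_k}\setminus\mathbf{I}_k$ via $(x^2+y^2)^k=(x+\mathrm{i}y)^k(x-\mathrm{i}y)^k$; this is more self-contained and sidesteps any Nullstellensatz subtleties over $\RR$. You also spell out the strictness of the inclusions and the incomparability of $\mathbf{I}_m^{\Re}$ and $\mathbf{I}_m^{\Im}$ by degree and divisibility arguments, points the paper leaves implicit.
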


\begin{proof}
\begin{enumerate}
    \item First, we prove that $\mathbf{I}_k \supseteq \mathbf{I}_{k+1}$ for $0 \leq k \leq \lfloor \frac{n-1}{2} \rfloor$.

    Consider $(x+\mathrm{i}y)^k=\sum_{j=0}^k{k\choose j}x^{k-j}y^j\mathrm{i}^j$.
Consequently, we get
\begin{align*}
    \Re((x+\mathrm{i}y)^k)&=\sum_{j=0}^{\lceil\frac{k-1}{2}\rceil}{k\choose 2j}x^{k-2j}y^{2j}(-1)^j,\\
    \Im((x+iy)^k)&=\sum_{j=0}^{\lfloor\frac{k-1}{2}\rfloor}{k\choose 2j+1}x^{k-2j-1}y^{2j+1}(-1)^j.
\end{align*}
It follows for odd $k$
\begin{align*}
    \Re((x+\mathrm{i}y)^k)\cdot x-\Im((x+\mathrm{i}y)^k)\cdot y = &\sum_{j=0}^{\frac{k-1}{2}}{k\choose 2j}x^{k-2j+1}y^{2j}(-1)^j-\sum_{j=0}^{\frac{k-1}{2}}{k\choose 2j+1}x^{k-2j-1}y^{2j+2}(-1)^j\\
    = &\sum_{j=0}^{\frac{k-1}{2}}{k\choose 2j}x^{k-2j+1}y^{2j}(-1)^j-\sum_{j=1}^{\frac{k-1}{2}}{k\choose 2j-1}x^{k-2j+1}y^{2j}(-1)^{j-1}\\&-{k\choose 2\frac{k-1}{2}+1}x^{k-2(\frac{k-1}{2}+1)}y^{2\frac{k-1}{2}+2}(-1)^{\frac{k-1}{2}+1}\\
    = &x^{k+1}+\sum_{j=1}^{\frac{k-1}{2}}\left({k\choose 2j}+{k\choose 2j-1}\right)x^{k-2j+1}y^{2j}(-1)^{j}-y^{k+1}(-1)^{\frac{k-1}{2}+1}\\
    = & x^{k+1}+\sum_{j=1}^{\frac{k-1}{2}}{k+1\choose 2j}x^{k-2j+1}y^{2j}(-1)^{j}+y^{k+1}(-1)^{\frac{k+1}{2}}\\
    = &\sum_{j=0}^{\frac{k+1}{2}}{k+1\choose 2j}x^{k-2j+1}y^{2j}(-1)^{j} \\ 
    = & \Re((x+\mathrm{i}y)^{k+1}).
\end{align*}
Analogously, we have

\begin{align*}
    \Re((x+\mathrm{i}y)^k)\cdot y+\Im((x+\mathrm{i}y)^k)\cdot x = &\sum_{j=0}^{\frac{k-1}{2}}{k\choose 2j}x^{k-2j}y^{2j+1}(-1)^j+\sum_{j=0}^{\frac{k-1}{2}}{k\choose 2j+1}x^{k-2j}y^{2j+1}(-1)^j\\
    = &\sum_{j=0}^{\frac{k-1}{2}}\left( {k\choose 2j} + {k\choose 2j+1}\right) x^{k-2j}y^{2j+1}(-1)^j\\
    = &\sum_{j=0}^{\frac{k-1}{2}} {k+1\choose 2j+1}x^{k-2j}y^{2j+1}(-1)^j \\ = & \Im (x+\mathrm{i}y)^{k+1}.
\end{align*}
The same procedure works similarly for even $k$.\\
Thus, $\Re((x+\mathrm{i}y)^{k+1}), \Im((x+\mathrm{i}y)^{k+1}) \in (V_k)\subseteq \RR[x,y]$ proving the claim about the ideal inclusion together with the identity $\Im((x+\mathrm{i}y)^{2k})=2\Re((x+\mathrm{i}y)^{k})\cdot \Im((x+\mathrm{i}y)^{k})$.

\item The claim on the isotypic component follows from the observation that for the two-dimensional irreducible representations $V_k \cong V_{n-k}$ in the space of harmonics $\mathcal{H}$ we have $\mathcal{H}_{n-k} \subseteq I_k$ by the proof in point (1).

\item The ideals $\mathbf{I}_0 = (1), \mathbf{I}_1 = (x,y)$ are clearly radical. Since $\Im ((x+\mathrm{i}y)^k)$ and $\Re ((x+\mathrm{i}y)^k)$ are products of pairwise linearly independent homogeneous linear polynomials, the radicality also follows for $\mathbf{I}_n$ and $\mathbf{I}_{ \frac{n}{2} }^{\Re },  \mathbf{I}_{ \frac{n}{2} }^{\Im}$.
All other Specht ideals cannot be radical, since their varieties equal $\{(0,0)\}$ but the ideals are not $(x,y)=\mathbf{I}_1$.
\end{enumerate}

\end{proof}

We can formulate a direct consequence on the solution of low degree polynomial systems of equations that are closed under the action of dihedral groups.

\begin{remark}
Let $I \subseteq \RR[x,y]$ be a $I_2(n)$-invariant ideal. If $I \cap \RR[x,y]_{< \frac{n}{2}}$ is non-empty then the variety of $I$ is a subset of $\{(0,0)\}$.
\end{remark}

\section{Preliminaries for type $D$}\label{sec:pre. for D_n case}
Throughout the remaining sections, let $\KK$ be a field of characteristic zero and $n \in \NN$.
We write $\KK[\underline{X}]$ for the polynomial ring $\KK[X_1,\ldots,X_n]$.
Recall that the \emph{symmetric group} $S_n$ is the group of all permutations of $[n]$ which acts on $\KK[\underline{X}]$ by permuting the variables. 
The \emph{hyperoctahedral group} $B_n$ is the wreath product of the cyclic group $C_2$ and the symmetric group $S_n$, i.e. \[B_n \cong C_2 \wr S_n = \{((s_1,\ldots,s_n),\sigma) : s_i \in \{\pm 1\}, \sigma \in S_n\}.\]
The group $B_n$ is also known as the signed symmetric group. 
$B_n$ acts on $\KK[\underline{X}]$ by permuting the variables and switching the signs $X_i \mapsto -X_i$.\\
The \emph{even signed symmetric group} $D_n$ is the subgroup of $B_n$ of index $2$ which always switches an even number of signs. 
More precisely, \[D_n = \{((s_1,\ldots,s_n),\sigma) : s_i \in \{\pm 1\}, \sigma \in S_n, \prod_{i=1}^n s_i = 1\}\] and $D_n$ acts on $\KK[\underline{X}]$ via the restriction of the action of $B_n$.
For background on the groups $S_n, B_n$ and $D_n$ and their representations on $\KK[\underline{X}]$ we refer to \cite{musili} and introduce the relevant notation to study Specht ideals below. 
We briefly motivate the connection of Specht ideals to representation theory in \cref{rem:Connection to repr theory}.

\subsection{Partitions and tableaux}

A \emph{partition} $\lambda$ of $n$ is a sequence of non-increasing positive integers $(\lambda_1,\lambda_2,\ldots,\lambda_l)$ for which $\sum_{i=1}^l\lambda_i=n$.
We write $\lambda \vdash n$ if $\lambda$ is a partition of $n$. 
We also allow to add an arbitrary number of zeros after $\lambda_l$ and identify the obtained sequence with the partition.
The maximal $l$ for which $\lambda_l \neq 0$ is called the length of $\lambda$ and is denoted by $\operatorname{len}(\lambda)$. 
Although, strictly speaking, there is no partition of $0$ according to the definition above, we say that $\emptyset$ is the unique partition of $0$.
A \emph{bipartition} of $n$ is a pair of partitions $(\lambda,\mu)$ with $\lambda \vdash k$ and $\mu \vdash n-k$ for some integer $0 \leq k \leq n$. 
A \emph{dipartition} of $n$ is an unordered pair of distinct partitions $\{\lambda,\mu\}$ which are partitions of $k$ and $n-k$ for some $k$, or a set of the form $\{\lambda,+\}$ or $\{\lambda,-\}$ where $\lambda \vdash \frac{n}{2}$ is a partition (which requires $n$ to be even). 
We often write $\{\lambda,\pm\}$ and mean that the dipartition is either $\{\lambda,+\}$ or $\{\lambda,-\}$.
For two partitions $\lambda \vdash k,\mu \vdash n-k$ we call the unique partition of $n$, which is the reordering of the concatenation of $\lambda$ and $\mu$, the \emph{fusion} of $\lambda$ and $\mu$ and denote it by $\lambda \uplus \mu$.\\
We denote the set of partitions/bipartitions/dipartitions of $n$ by $\mathcal{P}_n$/$\mathcal{B}_n$/$\mathcal{D}_n$ and we equip the first two with known partial orders.
The \emph{dominance order} $\unlhd_S$ (see e.g. \cite{brylawski1973lattice} for a study of the lattice $(\mathcal{P}_n,\unlhd_S)$) is a partial order on the set $\mathcal{P}_n$ defined by $\mu \unlhd_S \lambda$, if 
\begin{align*}
\sum_{i=1}^j \mu_i \leq \sum_{i=1}^j \lambda_i, \text{ for all integers } 1 \leq j \leq n.
\end{align*}
The \emph{bidominance order} introduced in \cite{debus_moustrou_riener_verdure} is the partial order $\unlhd_B$ on the set $\mathcal{B}_n$ defined by $(\lambda,\mu) \unlhd_B (\omega,\theta)$ if 
\begin{align*}
\sum_{i=1}^j (\lambda_i+ \mu_i) \leq \sum_{i=1}^j (\omega_i + \theta_i) \text{ and } \lambda_{j}+\sum_{i=1}^{j-1} (\lambda_i+ \mu_i) \leq \omega_{j} + \sum_{i=1}^{j-1} (\omega_i + \theta_i), \text{ for all integers } 1 \leq j \leq n.    
\end{align*}
Note that the first chain of inequalities is equivalent to the dominance relation of the partitions $\lambda \uplus \mu$ and $\omega \uplus \theta$.
We introduce a partial order on $\mathcal{D}_n$ in Section \ref{sec:Partial order}.\\
The \emph{diagram} of a partition $\lambda$ is the ordered collection of boxes, arranged in left-justified, top-aligned rows where the $i$-th row contains $\lambda_i$ many boxes for all positive integers $i$.
We frequently identify partitions with their associated diagrams.
A filling of a diagram of shape $\lambda \vdash n$ using all of the integers in $[n]$ is called a \emph{tableau}. 
We denote the set of all tableaux of shape $\lambda$ by $\YT(\lambda)$. \\
For a sequence $K=(k_1,\ldots,k_\ell)$ of pairwise distinct integers in $[n]$ let 
\[
    \Delta_K(\underline{X})\coloneqq \prod_{1 \leq i < j \leq \ell}(X_{k_i}-X_{k_j}) \in \ZZ[\underline{X}] \subseteq \KK[\underline{X}]
\] 
its associated \emph{Vandermonde determinant}, where $\Delta_\emptyset(\underline{X})\coloneqq 1$. 
If $T$ is a tableau with $m$ columns and $C_i$ denotes the sequence of entries of the $i$-th column of $T$ written from top to bottom, then \[\spe_T^\mathbf{S}(\underline{X}) \coloneqq  \Delta_{C_1}(\underline{X})\cdots \Delta_{C_m}(\underline{X})\] is the $\mathbf{S}$-\emph{Specht polynomial} of $T$. The index $\mathbf{S}$ in $\spe_T^\mathbf{S}$ refers to the symmetric group $S_n$.

\begin{example}
For instance, $\ytableausetup{smalltableaux}
\begin{ytableau}
\empty &  &  & \\
 \empty &  & \\ 
\empty &
\end{ytableau}$
is a diagram of shape $\mu=(4,3,2)$ and $ S =
\begin{ytableau}
3 & 2 & 8 & 6\\
 1 & 4 &5 \\ 
9 & 7
\end{ytableau}$
is a tableau of shape $\mu$. Here, $C_1 = (3,1,9)$ is the sequence corresponding to the first column of $S$. Its associated Vandermonde determinant is $\Delta_{C_1}(\underline{X})=(X_3-X_1)(X_3-X_9)(X_1-X_9)$ and \[\spe_S^\mathbf{S}(\underline{X})=(X_3-X_1)(X_3-X_9)(X_1-X_9)(X_2-X_4)(X_2-X_7)(X_4-X_7)(X_8-X_5).\]
\end{example}

We continue with analogous definitions for bipartitions and dipartitions. 
The \emph{bidiagram} of a bipartition $(\lambda,\mu)$ is the pair of diagrams of shape $\lambda$ and $\mu$. 
A \emph{bitableau} of shape $(\lambda,\mu)$ is a filling of the bidiagram with the integers $[n]$ such that any integer occurs precisely once. 
We denote the set of all bitableaux of shape $(\lambda,\mu)$ by $\YT(\lambda,\mu)$.
The \emph{$\mathbf{B}$-Specht polynomial} $\spe_{(T,S)}^\mathbf{B}$ associated with a bitableau $(T,S)$ is the polynomial 
\[\spe_{(T,S)}^\mathbf{B}(\underline{X})\coloneqq \spe_T^\mathbf{S}(\underline{X}^2)\spe_S^\mathbf{S}(\underline{X}^2)\prod_{j \in S}X_j,\]
where $\underline{X}^2 \coloneqq  (X_1^2,\ldots,X_n^2)$.
The index $\mathbf{B}$ in $\spe_{(T,S)}^\mathbf{B}$ refers to the hyperoctahedral group $B_n$.

\begin{example}
The bidiagram $\left(
\ytableausetup{smalltableaux,centertableaux}
\begin{array}{cc}
\begin{ytableau}
\empty & \empty  \\
\empty & \empty  \\
\empty & \empty 
\end{ytableau}
~,~&
\begin{ytableau}
\empty & \empty  \\
\empty & \none \\
\none
\end{ytableau}
\end{array}
\right)$ is of shape $(\lambda,\mu)=((2,2,2),(2,1))$ and $(T,S)=\left(
\ytableausetup{smalltableaux,centertableaux}
\begin{array}{cc}
\begin{ytableau}
3 & 2  \\
1& 4  \\
9 & 7 
\end{ytableau}
~,~&
\begin{ytableau}
8 & 6  \\
5 & \none \\
\none
\end{ytableau}
\end{array}\right)$
is a bitableau of shape $(\lambda,\mu)$. 
The corresponding $\mathbf{B}$-Specht polynomial  is \[\spe_{(T,S)}^\mathbf{B}=(X_3^2-X_1^2)(X_3^2-X_9^2)(X_1^2-X_9^2)(X_2^2-X_4^2)(X_2^2-X_7^2)(X_4^2-X_7^2)(X_8^2-X_5^2)X_5X_6X_8.\] 
By permuting the two tableaux, we obtain the pair $(S,T)$ which is a bitableau of shape $(\mu,\lambda)$ and $\spe_{(S,T)}^\mathbf{B}$ equals
\[(X_3^2-X_1^2)(X_3^2-X_9^2)(X_1^2-X_9^2)(X_2^2-X_4^2)(X_2^2-X_7^2)(X_4^2-X_7^2)(X_8^2-X_5^2)X_1X_2X_3X_4X_7X_9.\]
\end{example}

For a dipartition of the form $\{\lambda,\pm\}$ and a bitableau $(T,S)$ of shape $(\lambda,\lambda)$ we define the \emph{$\mathbf{D}$-Specht polynomials} 
\begin{align*}
 \spe_{\{(T,S),+\}}^\mathbf{D}(\underline{X})\coloneqq & \spe_{(T,S)}^\mathbf{B}(\underline{X})+\spe_{(S,T)}^\mathbf{B}(\underline{X})=\spe_T^\mathbf{S}(\underline{X}^2)\spe_S^\mathbf{S}(\underline{X}^2)(\prod_{i \in T}X_i+\prod_{j \in S}X_j), \\
 \spe_{\{(T,S),-\}}^\mathbf{D}(\underline{X})\coloneqq  & \spe_{(T,S)}^\mathbf{B}(\underline{X})-\spe_{(S,T)}^\mathbf{B}(\underline{X})=\spe_T^\mathbf{S}(\underline{X}^2)\spe_S^\mathbf{S}(\underline{X}^2)(\prod_{i \in T}X_i-\prod_{j \in S}X_j).
\end{align*}
The index $\mathbf{D}$ in $\spe_{(T,S)}^\mathbf{D}$ refers to the even signed symmetric group $D_n$.

\begin{example}
Let $\{\lambda,-\}$ be the dipartition of $10$ with $\lambda = (3,2)$. Then for the bitableau $(T,S)=\left(
\ytableausetup{smalltableaux,centertableaux}
\begin{array}{cc}
\begin{ytableau}
3 & 2 & 10 \\
9& 5  
\end{ytableau}
~,~&
\begin{ytableau}
4 & 1 & 8 \\
7 & 6
\end{ytableau}
\end{array}\right)$ of shape $(\lambda,\lambda)$ 
 its associated $\mathbf{D}$-Specht polynomial is
\[\spe_{(T,S)}^\mathbf{D} (\underline{X})= (X_3^2-X_9^2)(X_2^2-X_5^2)(X_4^2-X_7^2)(X_1^2-X_6^2)(X_2X_3X_5X_9X_{10}+X_1X_4X_6X_7X_8).\]
\end{example}
\subsection{Specht ideals}
We recall the definitions of Specht ideals which were studied for $S_n$ and $B_n$ and introduce them for $D_n$.
\begin{definition}
Let $\lambda \in \mathcal{P}_n, (\lambda,\mu) \in \mathcal{B}_n$ and $\Lambda \in \mathcal{D}_n$.
\begin{itemize}
    \item The $\mathbf{S}$-\emph{Specht ideal} of $\lambda$, denoted $I_\lambda^{\mathbf{S}}$, is the ideal in $\KK[\underline{X}]$ generated by all Specht polynomials indexed by tableaux of shape $\lambda$, i.e. $I_\lambda^{\mathbf{S}} \coloneqq  (\spe_T^\mathbf{S} \, \, \mid \, \, T \in \YT(\lambda))$. 
The \emph{Specht variety} $V_\lambda^{\mathbf{S}}$ is the algebraic variety of $I_\lambda^{\mathbf{S}} $ over the field $\KK$, i.e. $V_\lambda^{\mathbf{S}} = \{ x \in \KK^n \, \, : \, \, \spe_T^\mathbf{S}(x)=0,\, \forall T \in \YT(\lambda)\}$.
\item The $\mathbf{B}$-\emph{Specht ideal} of $(\lambda,\mu)$, denoted $I_{(\lambda,\mu)}^{\mathbf{B}}$, is the ideal generated by all $\mathbf{B}$-Specht polynomials indexed by bitableaux of shape $(\lambda,\mu)$.
The \emph{Specht variety} $V_{(\lambda,\mu)}^{\mathbf{B}}$ is the algebraic variety of $I_{(\lambda,\mu)}^{\mathbf{B}} $ over the field $\KK$.
\item The $\mathbf{D}$-\emph{Specht ideal} of $\Lambda$ is denoted by $I_{\Lambda}^{\textbf{D}}$. 
If $\Lambda = \{\lambda,\pm\}$ then $I_{\Lambda}^\mathbf{D}$ is the ideal generated by all $\spe_{\{(T,S),\pm\}}^{\mathbf{D}}(\underline{X})$ for $(T,S) \in \YT(\lambda,\lambda)$. 
Otherwise, for $\Lambda = \{\lambda,\mu\}$ the ideal $I_{\Lambda}^{\mathbf{D}}$ is generated by all $\mathbf{B}$-Specht polynomials whose bitableaux are of shape $(\lambda,\mu)$ or $(\mu,\lambda)$, i.e. $I_\Lambda^\mathbf{D} = I_{(\lambda,\mu)}^\mathbf{B} + I_{(\mu,\lambda)}^\mathbf{B}$. The $\mathbf{D}$-Specht variety $V_\Lambda^\mathbf{D}$ is the algebraic variety of $I_\Lambda^\mathbf{D}$ over the field $\KK$.
\end{itemize}
\end{definition}

For a group $G$ acting on $\KK[\underline{X}]$ an ideal $I\subseteq \KK[\underline{X}]$ is $G$-\emph{invariant} if $g \cdot f \in I$ for all $f \in I$ and $g \in G$.
Each Specht ideal is invariant with respect to the action of its corresponding group by construction.
In particular, the $\mathbf{B}$-Specht ideals are also $D_n$ and $S_n$-invariant, and $\mathbf{D}$-Specht ideals are also $S_n$-invariant. 
However, the $\mathbf{D}$-Specht ideals are in general not $B_n$-invariant, e.g. $I_{\{(2,2),+\}}^\mathbf{D}$ is not $B_8$-invariant.

\begin{remark}\label{rem:Connection to repr theory}
 The definition of Specht ideals is motivated by the representation theory of the underlying group. For $S_n$ and $B_n$ and $\operatorname{char}(\KK)=0$, the Specht polynomials were introduced by Wilhelm Specht \cite{ specht1937Bdarstellungstheorie, specht1937Adarstellungstheorie}. He proved that the vector spaces spanned by the Specht polynomials of a fixed shape, called \emph{Specht modules}, are precisely the pairwise non-isomorphic, irreducible representations of the groups $S_n$ and $B_n$. 
 The Specht modules are often denoted by $\mathbb{S}^\lambda$/$\mathbb{S}^{(\lambda,\mu)}$.
 This construction was extended to the subgroup $D_n$ of $B_n$ of index $2$ using Clifford theory (see e.g. \cite[Chapter~8]{musili} or \cite{morita1998higher}). 
 For $\lambda \neq \mu$ every $B_n$-irreducible representation $\mathbb{S}^{(\lambda,\mu)}$ is also $D_n$-irreducible and  $\mathbb{S}^{(\lambda,\mu)}$, $\mathbb{S}^{(\mu,\lambda)}$ are $D_n$-isomorphic (\emph{loc. cit.}) and we write $\mathbb{S}^{\{\lambda,\mu\}}$ instead. Moreover, every $B_n$-irreducible representation $\mathbb{S}^{(\lambda,\lambda)}$ decomposes into two non-isomorphic $D_n$-irreducible representations 
 \begin{align*}
 \mathbb{S}^{\{\lambda,+\}} \coloneqq  &\langle \spe_{(T,S)}(\underline{X})+\spe_{(S,T)}(\underline{X}) : (T,S) \in \YT(\lambda,\lambda) \rangle, \\ 
 \mathbb{S}^{\{\lambda,-\}} \coloneqq & \langle \spe_{(T,S)}(\underline{X})-\spe_{(S,T)}(\underline{X}) : (T,S) \in \YT(\lambda,\lambda)\rangle.      \end{align*}
These are all $D_n$-irreducible representations (\emph{loc. cit.}). 
\end{remark} 

In fact, if $\operatorname{char}(\mathbb{K})=0$ then for $S_n$ and $B_n$ an irreducible representation isomorphic to a Specht module in $\KK[\underline{X}]$ is contained in the corresponding Specht ideal \cite{debus_moustrou_riener_verdure,moustrou2021symmetric, woo2005ideals}.
In other words, the isotypic component of $\KK[\underline{X}]$ with respect to $\mathbb{S}^\lambda$/$\mathbb{S}^{(\lambda,\mu)}$ is contained in $I_\lambda^{S_n}$/$I_{(\lambda,\mu)}^{B_n}$.  
Actually, an even stronger property holds. Namely, if $\spe_T \mapsto f$ defines an equivariant isomorphism of $S_n$-modules then $f$ is divisible by $\spe_T$ in $\KK[\underline{X}]$ \cite{moustrou2021symmetric, woo2005ideals}. 
The analogous statement holds for $B_n$ \cite[Proposition~7.6]{debus_moustrou_riener_verdure}.
The following example shows that this property does not hold for the group $D_n$.

\begin{example}\label{example:dividing specht polynomials}
For $(\lambda,\lambda)=((1,1),(1,1))$ each of the $B_4$-orbits of the polynomials $f_1(\underline{X}) = (X_1^4-X_2^4)(X_3^2-X_4^2)X_3X_4$ and $f_2(\underline{X}) = (X_1^2-X_2^2)(X_3^4-X_4^4)X_1X_2$ spans an irreducible representation $\mathbb{S}^{(\lambda,\lambda)}$ by \cite[Section~5]{morita1998higher}. 
Moreover, the map $g(\underline{X})\coloneqq (X_1^2-X_2^2)(X_3^2-X_4^2)(X_1X_2+X_3X_4) \mapsto f_1+f_2$ induces a $D_4$-equivariant isomorphism (\emph{loc. cit.}). 
However, the $\mathbf{D}$-Specht polynomial $g$ does not divide $f_1+f_2$.
Nevertheless, we find $f_1+f_2 \in I_{\{\lambda,+\}}^\mathbf{D}$ with SAGE \cite{sagemath} which shows that containment of the isotypic component in the Specht ideal is not ruled out by this example.
\end{example}
We can at least say something about $f$ if $\spe_{(T,S),+}^\mathbf{D} \mapsto f \in \KK[\underline{X}]$ is a $D_n$-equivariant isomorphism. 
\begin{observation} \label{observation1}
If $\spe_{(T,S),+}^\mathbf{D} \mapsto f \in \KK[\underline{X}]$ is a $D_n$-equivariant isomorphism then $f$ must be of the form 
\begin{align}\label{equation form f}
 f(\underline{X}) = \spe_T^{\mathbf{S}}(\underline{X}^2)\spe_S^{\mathbf{S}}(\underline{X}^2) \left( R(\underline{X}^2) \cdot \prod_{i \in T}X_i +  \tau(R(\underline{X}^2)) \cdot \prod_{i \in S}X_i\right),    
\end{align} 
where $R \in \KK[\underline{X}]$ and $\tau \in S_n$ is an involution which, when applied entrywise, maps $T$ to $S$.
The same holds for equivariant isomorphisms $\spe_{(T,S),-}^\mathbf{D} \mapsto f \in \KK[\underline{X}]$.
\end{observation}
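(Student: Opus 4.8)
The plan is to bypass representation theory and argue by divisibility inside one explicitly chosen finite subgroup $H\le D_n$. Since $\spe_{(T,S),+}^{\mathbf{D}}\mapsto f$ is $D_n$-equivariant, $f$ is a common eigenvector for $H$ with exactly the characters by which $\spe_{(T,S),+}^{\mathbf{D}}$ is a common eigenvector, and this eigenvector condition alone will force the stated normal form. Write $n=2m$ (even, since $\{\lambda,\pm\}\in\mathcal{D}_n$ forces $\lambda\vdash m$), let $R_T,R_S\le S_n$ be the column stabilizers of $T$ and of $S$ (products of symmetric groups on the columns; $R_T$ acts on the set $T$ and $R_S$ on the set $S$, with $T\sqcup S=[n]$), for a two-element subset $\{a,b\}\subseteq[n]$ let $\varepsilon_{\{a,b\}}\in D_n$ be the sign change flipping exactly the coordinates $a$ and $b$, and let $\tau$ be the involution from the statement, i.e.\ the product of transpositions that swaps the entry of box $p$ in $T$ with the entry of box $p$ in $S$. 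Let $H$ be the subgroup of $D_n$ generated by $R_T\times R_S$, all $\varepsilon_{\{a,b\}}$, and $\tau$; since we only need a necessary condition on $f$, working with this $H$ rather than all of $D_n$ loses nothing.

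First I would record how these generators act on $\spe_{(T,S),+}^{\mathbf{D}}=\spe_T^{\mathbf{S}}(\underline{X}^2)\spe_S^{\mathbf{S}}(\underline{X}^2)\bigl(\prod_{i\in T}X_i+\prod_{j\in S}X_j\bigr)$: a permutation $\sigma\in R_T\times R_S$ scales it by the product of the signs of $\sigma$ on the columns of $T$ and of $S$ while fixing both monomials $\prod_{i\in T}X_i$ and $\prod_{j\in S}X_j$; a two-flip $\varepsilon_{\{a,b\}}$ fixes $\spe_T^{\mathbf{S}}(\underline{X}^2)\spe_S^{\mathbf{S}}(\underline{X}^2)$ and scales $\spe_{(T,S),+}^{\mathbf{D}}$ by the common sign $(-1)^{|\{a,b\}\cap T|}$ (the two monomials scale by the same sign because any $D_n$-sign-change $(s_i)$ satisfies $\prod_{i\in T}s_i=\prod_{j\in S}s_j$); and $\tau$, which carries the columns of $T$ to those of $S$ order-preservingly, exchanges $\spe_T^{\mathbf{S}}(\underline{X}^2)\leftrightarrow\spe_S^{\mathbf{S}}(\underline{X}^2)$ and $\prod_{i\in T}X_i\leftrightarrow\prod_{j\in S}X_j$, hence fixes $\spe_{(T,S),+}^{\mathbf{D}}$. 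Then, for $a,b$ in a common column of $T$, the transposition $(a\,b)\in R_T$ gives $(a\,b)\cdot f=-f$ while $\varepsilon_{\{a,b\}}\cdot f=f$; expanding $f=\sum_{i,j}c_{ij}X_a^iX_b^j$ over the remaining variables, the first relation kills the diagonal terms and the second kills the terms with $i+j$ odd, so every surviving pair $X_a^iX_b^j-X_a^jX_b^i$ has $j-i$ even and positive and is divisible by $X_a^2-X_b^2$. Because the linear forms $X_a\pm X_b$ coming from distinct column-pairs of $T$ (and of $S$) are pairwise non-associate in the UFD $\KK[\underline{X}]$, and $T$- and $S$-indices are disjoint, these divisibilities multiply to $\spe_T^{\mathbf{S}}(\underline{X}^2)\spe_S^{\mathbf{S}}(\underline{X}^2)\mid f$; write $f=\spe_T^{\mathbf{S}}(\underline{X}^2)\spe_S^{\mathbf{S}}(\underline{X}^2)\,g$.

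Next I would determine $g$. Cancelling the nonzero Vandermonde factor, the two-flips give $\varepsilon_{\{a,b\}}\cdot g=g$ for $a,b\in T$, $\varepsilon_{\{a,b\}}\cdot g=g$ for $a,b\in S$, and $\varepsilon_{\{a,b\}}\cdot g=-g$ for $a\in T$, $b\in S$. Read on a monomial $\underline{X}^\alpha$ of $g$, these say that the exponents $\alpha_i$ with $i\in T$ all have one parity $p_T$, the $\alpha_j$ with $j\in S$ all have a parity $p_S$, and $p_T\neq p_S$; since $T\sqcup S=[n]$ there are no other variables, so grouping the two cases yields $g=(\prod_{i\in T}X_i)\,R_1(\underline{X}^2)+(\prod_{j\in S}X_j)\,R_2(\underline{X}^2)$ for some $R_1,R_2\in\KK[\underline{X}]$. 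Finally, applying $\tau$ and using $\tau\cdot f=f$: $\tau$ fixes $\spe_T^{\mathbf{S}}(\underline{X}^2)\spe_S^{\mathbf{S}}(\underline{X}^2)$, exchanges $\prod_{i\in T}X_i\leftrightarrow\prod_{j\in S}X_j$, and carries $R_k(\underline{X}^2)$ to $\tau(R_k)(\underline{X}^2)$; as the monomials appearing in $(\prod_{i\in T}X_i)\cdot(\text{even})$ never coincide with those in $(\prod_{j\in S}X_j)\cdot(\text{even})$, matching coefficients forces $R_2=\tau(R_1)$, and setting $R\coloneqq R_1$ gives exactly \eqref{equation form f}. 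For the minus case the argument is identical except that $\tau\cdot\spe_{(T,S),-}^{\mathbf{D}}=-\spe_{(T,S),-}^{\mathbf{D}}$, so one obtains $R_2=-\tau(R_1)$ and hence the analogous expression with a minus between the two terms.

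I expect the one genuinely delicate step to be the passage from column antisymmetry to $\underline{X}^2$-divisibility: a column transposition alone gives only $(X_a-X_b)\mid f$, and one must combine it with the sign flip $\varepsilon_{\{a,b\}}$ to upgrade to $(X_a^2-X_b^2)\mid f$, while checking that every group element invoked has an even number of sign changes (this is also what makes the two scalars attached to $\prod_{i\in T}X_i$ and $\prod_{j\in S}X_j$ agree at every stage). Conceptually, the reason one lands on a two-term expression rather than on divisibility of $f$ by $\spe_{(T,S),+}^{\mathbf{D}}$---in contrast to the $S_n$ and $B_n$ cases---is precisely that $\tau$-equivariance only forces the two ``halves'' of $f$ to be $\tau$-images of one another rather than equal; this is exactly the phenomenon behind \cref{example:dividing specht polynomials}. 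The rest (coprimality of the Vandermonde factors in a UFD, the monomial-parity bookkeeping, and the use of $T\sqcup S=[n]$) is routine.
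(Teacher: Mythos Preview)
Your argument is correct and uses the same three ingredients as the paper's proof---the double sign changes $\varepsilon_{\{a,b\}}$, the column transpositions, and the involution $\tau$---to draw the same three conclusions (parity structure, Vandermonde divisibility, and $R_2=\tau(R_1)$). The only difference is the order: the paper first uses the sign changes alone to write $f=h_1(\underline{X}^2)\prod_{i\in T}X_i+h_2(\underline{X}^2)\prod_{j\in S}X_j$, and then a column transposition gives antisymmetry of $h_1,h_2$ in the \emph{squared} variables, so the standard $(X_k-X_l)\mid h$ argument already yields $(X_k^2-X_l^2)\mid h_i(\underline{X}^2)$ without needing to pair the transposition with a sign flip; you instead establish the Vandermonde divisibility first, which forces you to combine $(a\,b)$ with $\varepsilon_{\{a,b\}}$ to upgrade $(X_a-X_b)\mid f$ to $(X_a^2-X_b^2)\mid f$, and only afterwards read off the parity form of the quotient $g$. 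Both orderings work; the paper's buys a slightly cleaner divisibility step, yours makes the role of the sign flip in the ``$X^2$-upgrade'' more explicit.
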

A proof of \cref{observation1} can be found in \cref{proof of Observation1}.
In \cref{example:dividing specht polynomials} we have $R(\underline{X})=X_1-X_2$. 
For dipartitions of the form $\{\lambda,\mu\}$ the containment of the isotypic component in the Specht ideal follows from the $B_n$-case.

\begin{corollary}
\label{cor:isotypic decomposition}
For $\lambda \neq \mu$ the isotypic component of $\KK[\underline{X}]$ with respect to $\mathbb{S}^{\{\lambda,\mu\}}$ is contained in $I_{\{\lambda,\mu\}}^\mathbf{D}$.   
\end{corollary}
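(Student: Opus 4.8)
The plan is to reduce the statement for $D_n$ directly to the corresponding statement for $B_n$, using the two facts already recorded in the excerpt: (i) for $\lambda \neq \mu$, the $B_n$-Specht ideal $I_{(\lambda,\mu)}^{\mathbf B}$ contains the isotypic component of $\KK[\underline X]$ with respect to $\mathbb S^{(\lambda,\mu)}$ (the type $B$ result of \cite{debus_moustrou_riener_verdure,moustrou2021symmetric,woo2005ideals}), and (ii) by definition $I_{\{\lambda,\mu\}}^{\mathbf D} = I_{(\lambda,\mu)}^{\mathbf B} + I_{(\mu,\lambda)}^{\mathbf B}$, so in particular $I_{(\lambda,\mu)}^{\mathbf B} \subseteq I_{\{\lambda,\mu\}}^{\mathbf D}$ and likewise for $(\mu,\lambda)$.

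First I would recall from \cref{rem:Connection to repr theory} that for $\lambda \neq \mu$ the $B_n$-irreducible $\mathbb S^{(\lambda,\mu)}$ is already $D_n$-irreducible, and that $\mathbb S^{(\lambda,\mu)}$ and $\mathbb S^{(\mu,\lambda)}$ become $D_n$-isomorphic, jointly denoted $\mathbb S^{\{\lambda,\mu\}}$. Next, let $W \subseteq \KK[\underline X]$ be any $D_n$-submodule isomorphic to $\mathbb S^{\{\lambda,\mu\}}$; I want to show $W \subseteq I_{\{\lambda,\mu\}}^{\mathbf D}$. The key step is to compare the restriction functors: restrict the $B_n$-action to $D_n$. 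Since $[B_n : D_n] = 2$, either $W$ extends to a $B_n$-submodule of $\KK[\underline X]$, or it does not; but in either case one can produce a $B_n$-submodule $W'$ of $\KK[\underline X]$ whose restriction to $D_n$ contains $W$ and which, as a $B_n$-module, is isomorphic to $\mathbb S^{(\lambda,\mu)} \oplus \mathbb S^{(\mu,\lambda)}$ (or a summand thereof). Concretely: take $W' := W + g \cdot W$ where $g \in B_n \setminus D_n$ is any sign-switch of a single coordinate; this is $B_n$-stable, and by $B_n$-semisimplicity decomposes into $B_n$-irreducibles, each of which — since its $D_n$-restriction meets the $\mathbb S^{\{\lambda,\mu\}}$-isotypic part — must be $\mathbb S^{(\lambda,\mu)}$ or $\mathbb S^{(\mu,\lambda)}$. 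Then fact (i) gives $W' \subseteq I_{(\lambda,\mu)}^{\mathbf B} + I_{(\mu,\lambda)}^{\mathbf B} = I_{\{\lambda,\mu\}}^{\mathbf D}$, hence $W \subseteq W' \subseteq I_{\{\lambda,\mu\}}^{\mathbf D}$. Summing over all such $W$ yields the claim for the full isotypic component.

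The main obstacle is the bookkeeping of the Clifford-theoretic correspondence between $B_n$- and $D_n$-isotypic components inside $\KK[\underline X]$: one must be careful that the $\mathbb S^{\{\lambda,\mu\}}$-isotypic component of $\KK[\underline X]$ as a $D_n$-module is exactly the $D_n$-restriction of the sum of the $\mathbb S^{(\lambda,\mu)}$- and $\mathbb S^{(\mu,\lambda)}$-isotypic components as a $B_n$-module — this uses $\lambda \neq \mu$ crucially (the $(\lambda,\lambda)$ case behaves differently, which is why it is excluded and handled separately via \cref{observation1}). Once this identification is in place, the containment follows formally; I expect the write-up to be short, citing \cref{rem:Connection to repr theory} for the module-theoretic input and the type $B$ papers for fact (i).
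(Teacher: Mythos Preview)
Your proposal is correct and follows essentially the same approach as the paper: reduce to the type $B$ result via the identification of the $D_n$-isotypic component for $\mathbb S^{\{\lambda,\mu\}}$ with the sum of the $B_n$-isotypic components for $\mathbb S^{(\lambda,\mu)}$ and $\mathbb S^{(\mu,\lambda)}$, then use $I_{\{\lambda,\mu\}}^{\mathbf D}=I_{(\lambda,\mu)}^{\mathbf B}+I_{(\mu,\lambda)}^{\mathbf B}$. The only difference is that the paper states this identification in one line (citing \cref{rem:Connection to repr theory}), whereas you unpack it via the Clifford-theoretic construction $W'=W+gW$; your extra detail is sound but not needed for the write-up.
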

\begin{proof}
Let $\mathbb{K}[\underline{X}]_{\{\lambda,\mu\}}$ denote the $D_n$-isotypic component of $\mathbb{K}[\underline{X}]$ with respect to the dipartion ${\{\lambda,\mu\}}$ and let $\mathbb{K}[\underline{X}]_{(\lambda,\mu)}, \KK[\underline{X}]_{(\mu,\lambda)}$ denote the associated $B_n$-isotypic components of $\KK[\underline{X}]$.
Then $\KK[\underline{X}]_{\{\lambda,\mu\}} = \KK[\underline{X}]_{(\lambda,\mu)}\oplus\KK[\underline{X}]_{(\mu,\lambda)}\subseteq I_{(\lambda,\mu)}^{\mathbf{B}} + I_{(\mu,\lambda)}^{\mathbf{B}} = I_{\{\lambda,\mu\}}^\mathbf{D}$ by \cref{rem:Connection to repr theory}.
\end{proof}

\begin{remark}
It is an open question whether we have $\KK[\underline{X}]_{\{\lambda,\pm\}} \subseteq I_{\{\lambda,\pm\}}^\mathbf{D}$. 
Maybe \cref{equation form f} can help in establishing a positive answer.
\end{remark}

\subsection{Orbit types and Specht varieties}

The Specht varieties are unions of intersections of reflection hyperplanes of the underlying reflection group (i.e. of the braid arrangements) and are closed under the action of the group. 
Thus, whether a point lies in the Specht variety does depend solely on the orbit type of the point (defined in \cref{def:orbit types}). 
Moreover, an interesting combinatorial result on $\mathbf{S}$- and $\mathbf{B}$-Specht varieties is that these sets can be expressed as disjoint unions of orbit sets using the partial orders $\unlhd_S$ and $\unlhd_B$ (see \cref{thm:SnBnEquivalence}).

\begin{figure}[!htb]
    \centering
    \begin{minipage}{.5\textwidth}
        \centering

\tdplotsetmaincoords{80}{110}
\begin{tikzpicture}[tdplot_main_coords, scale=2.65]

\draw[->] (0,0,0) -- (1.5,0,0) node[anchor=north east]{$x_1$};
\draw[->] (0,0,0) -- (0,1.3,0) node[anchor=north west]{$x_2$};
\draw[->] (0,0,0) -- (0,0,1) node[anchor=south]{$x_3$};

\draw[blue, thick] (-1,-1,-1) -- (1,1,1) node[anchor=west]{$x_1 = x_2 = x_3$};

\draw[red, thick] (-0.7,0.7,-0.7) -- (0.7,-0.7,0.7) node[anchor=south]{$x_1 = -x_2 = x_3$};

\draw[cyan, thick] (-0.7,-0.7,0.7) -- (0.7,0.7,-0.7) node[anchor=west]{$x_1 = x_2 = -x_3$};

\end{tikzpicture}
    \caption{The Specht variety $V_{\{(2,1),\emptyset\}}^\mathbf{D} = V_{(\emptyset,(2,1))}^\mathbf{B} \subseteq \RR^3 $}
    \label{fig:D-Specht variety}
    \end{minipage}%
    \begin{minipage}{0.5\textwidth}
        \centering
            \begin{tikzpicture}
  \begin{axis}[
    view={120}{30},
    axis lines=center,
    xlabel={$x_1$}, ylabel={$x_2$}, zlabel={$x_3$},
    xmin=-2, xmax=2,
    ymin=-2, ymax=2,
    zmin=-2, zmax=2,
    ticks=none,
    colormap/cool,
    ]

    \addplot3[
      surf,
      opacity=0.4,
      domain=-2:2,
      y domain=-2:2,
    ]
    ({x}, {x}, {y});

    \addplot3[
      surf,
      opacity=0.4,
      domain=-2:2,
      y domain=-2:2,
    ]
    ({x}, {y}, {x});

    \addplot3[
      surf,
      opacity=0.4,
      domain=-2:2,
      y domain=-2:2,
    ]
    ({x}, {y}, {y});

  \end{axis}
\end{tikzpicture}
\caption{The Specht variety $V_{(1,1,1)}^\mathbf{S}\subseteq \RR^3$}
        \label{fig:S-Specht variety}
    \end{minipage}
\end{figure}

\begin{definition}\label{def:orbit types}
\begin{itemize}
    \item For a partition $\lambda = (\lambda_1,\ldots,\lambda_l) \in \mathcal{P}_n$ the \emph{orbit set} $O(\lambda)$ of type $\lambda$ is the set of all points in $\KK^n$ whose stabilizer subgroup of $S_n$ is isomorphic to $S_\lambda \coloneqq  S_{\lambda_1}\times \ldots \times S_{\lambda_l}$. Conversely, we say that a point $x \in O(\lambda)$ has \emph{orbit type} $\lambda$. 
    \item For a bipartition $(\lambda,\mu) \in \mathcal{B}_n$ with the property that $\lambda_{i+1} = \lambda_1$ whenever $\mu_i > 0$, we define the \emph{orbit set} $O(\lambda,\mu)$ as the $B_n$-orbits of all points of the form \[(\underbrace{0,\ldots,0}_{\lambda_1},\underbrace{\pm a_1,\ldots,\pm a_1}_{\mu_1+\lambda_2},\ldots,\underbrace{\pm a_l,\ldots,\pm a_l}_{\mu_l+\lambda_{l+1}})\]
    with $0 \neq a_i^2  \neq a_j^2 $ for all $1 \leq i \neq j \leq l$.
    For every other bipartition $(\lambda,\mu)$ we set $O(\lambda,\mu)\coloneqq \emptyset$. We say that a point $x \in O(\lambda,\mu)$ has \emph{orbit type} $(\lambda,\mu)$.
\end{itemize}
From the definition it is evident that orbit sets are pairwise disjoint. 
In particular, the definitions of orbit sets for the symmetric group and the hyperoctahedral group induce set partitions of $\KK^n$ labeled by partitions and bipartitions, respectively.
Sometimes, we distinguish more precisely between $\mathbf{S}$-orbit sets $O(\lambda)$ and $\mathbf{S}$-orbit types, and $\mathbf{B}$-orbit sets $O(\lambda,\mu)$ and $\mathbf{B}$-orbit types.
\end{definition}
\begin{example}
\begin{itemize}
    \item $O(2,2,1)=\{ \sigma \cdot (a,a,b,b,c) \, \, : \, \, a,b,c \in \KK, a \neq b \neq c \neq a, \sigma \in S_5\}$.
    \item The bipartitions $(\lambda,\mu)$ that have non-empty $\mathbf{B}$-orbit set are those arising as a cut of a partition of $n$ \cite[Definition~6.2]{debus_moustrou_riener_verdure}. More precisely, if $x \in \KK^n$ is such that $x^2=(x_1^2,\ldots,x_n^2)$ has $\mathbf{S}$-orbit type $\kappa \vdash n$ and $t$ is the number of $0$ coordinates of $x$ then the $\mathbf{B}$-orbit type of $x$ is $((t,\ldots,t,\lambda_{s+1},\ldots,\lambda_l),(\lambda_1-t,\ldots,\lambda_s-t))$ where $s$ is the maximal integer with $\lambda_s = t$.
    \item $O((2,2),(1,1))=\emptyset$ since $\len (1,1)=2$ but $(2,2)_{2+1}=0<2= (2,2)_1$, and $O((2,2),(1))=\{\sigma \cdot (0,0,a,a,a) \, \, : \, \, 0 \neq a \in \KK, \sigma \in B_5\}$.   
\end{itemize}
 \end{example}
 We can express the Specht varieties in terms of the partial orders and orbit types.

\begin{theorem}[\cite{debus_moustrou_riener_verdure,moustrou2021symmetric,woo2005ideals}]\label{thm:SnBnEquivalence}
For Specht ideals of $S_n$ and $B_n$ we have chains of equivalences.
\begin{table}[H]
    \centering
    \begin{tabular}{ccccc}
         $\mu \unlhd_S \lambda $ & $\Leftrightarrow $ & $I_{\mu}^\mathbf{S} \subseteq I_{\lambda}^\mathbf{S}$ &$\Leftrightarrow$ & $V_\lambda^{\mathbf{S}} \subseteq V_\mu^{\mathbf{S}}.$\\
       $(\vartheta,\omega) \unlhd_B (\lambda,\mu)$ & $\Leftrightarrow$ & $I_{(\vartheta,\omega)}^\mathbf{B} \subseteq I_{(\lambda,\mu)}^\mathbf{B}$ & $\Leftrightarrow$ & $V_{(\lambda,\mu)}^\mathbf{B} \subseteq V_{(\vartheta,\omega)}^\mathbf{B}.$
    \end{tabular}
\end{table}

Moreover,

\begin{table}[H]
    \centering
    \begin{tabular}{ccccc}
       $V_\lambda^\mathbf{S}$ & $=$ & $\bigcup_{\mu \not \unlhd \lambda} O(\mu)$ &$=$ & $\left( \bigcup_{\mu  \unlhd \lambda} O(\mu) \right)^c.$\\
       $ V_{(\lambda,\mu)}^\mathbf{B}$ & $=$ & $\bigcup_{(\vartheta,\omega) \not \unlhd_B (\lambda,\mu)} O(\vartheta,\omega)$ & $=$ & $\left( \bigcup_{(\vartheta,\omega) \unlhd_B (\lambda,\mu)} O(\vartheta,\omega)  \right)^c.$
    \end{tabular}
\end{table}
\end{theorem}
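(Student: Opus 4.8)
The plan is to reduce all six equivalences to a single combinatorial lemma about fillings of Young diagrams, and then close the loop between ideals and varieties using that Specht ideals are radical.

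First I would establish the orbit-set description of $V_\lambda^{\mathbf S}$. Fix $\lambda \vdash n$ and a point $x \in \KK^n$ of $\mathbf S$-orbit type $\nu \vdash n$. Since $\spe_T^{\mathbf S}$ is the product of the Vandermonde determinants of the columns of $T$, we have $\spe_T^{\mathbf S}(x)\neq 0$ exactly when no column of $T$ carries two indices $k,k'$ with $x_k=x_{k'}$. Viewing the distinct coordinate values of $x$ as colors with multiplicities $\nu_1\geq\nu_2\geq\cdots$, the existence of a tableau $T$ of shape $\lambda$ with $\spe_T^{\mathbf S}(x)\neq 0$ is then equivalent to the existence of a bipartite graph with degree sequence $\nu$ on the color side and degree sequence $\lambda'$ (the column heights of the diagram of $\lambda$) on the column side; by the Gale--Ryser theorem such a graph exists if and only if $\nu\unlhd_S\lambda$ (here using that the conjugate of $\lambda'$ is $\lambda$). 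Hence $x\notin V_\lambda^{\mathbf S}\iff\nu\unlhd_S\lambda$. Since the orbit sets $O(\nu)$, $\nu\vdash n$, partition $\KK^n$ and are all nonempty, this immediately gives $V_\lambda^{\mathbf S}=\bigcup_{\nu\not\unlhd\lambda}O(\nu)=\big(\bigcup_{\nu\unlhd\lambda}O(\nu)\big)^{c}$, and comparing these descriptions for $\lambda$ and $\mu$ (again using nonemptiness, so that the unions determine their index sets, together with transitivity of $\unlhd_S$) shows $V_\lambda^{\mathbf S}\subseteq V_\mu^{\mathbf S}\iff\mu\unlhd_S\lambda$.

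For the ideal side, the implication $I_\mu^{\mathbf S}\subseteq I_\lambda^{\mathbf S}\Rightarrow V_\lambda^{\mathbf S}\subseteq V_\mu^{\mathbf S}$ is formal. For the converse I would use that $\mathbf S$-Specht ideals are radical \cite{Murai2022Specht,woo2005ideals}, so $I_\lambda^{\mathbf S}$ equals the vanishing ideal of $V_\lambda^{\mathbf S}$; then $V_\lambda^{\mathbf S}\subseteq V_\mu^{\mathbf S}$ gives $I_\mu^{\mathbf S}\subseteq I_\lambda^{\mathbf S}$. (Alternatively one proves $\mu\unlhd_S\lambda\Rightarrow I_\mu^{\mathbf S}\subseteq I_\lambda^{\mathbf S}$ directly, by reducing to a covering relation in which $\lambda$ is obtained from $\mu$ by raising a single box and rewriting a shape-$\mu$ Specht polynomial as a polynomial combination of shape-$\lambda$ Specht polynomials via a Garnir straightening relation; this avoids radicality but is the more computational route.) The $B_n$ statement follows by the same three steps with bipartitions in place of partitions. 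The only new ingredient is the combinatorial lemma: for $x\in\KK^n$, the factor $\prod_{j\in S}X_j$ in $\spe_{(T,S)}^{\mathbf B}$ forbids any zero coordinate of $x$ from lying in $S$, while $\spe_T^{\mathbf S}(x^2)\spe_S^{\mathbf S}(x^2)\neq 0$ forbids two indices with $x$-values equal up to sign from sharing a column. Writing the $\mathbf S$-orbit type of $x^2$ as $\kappa\vdash n$ with $t$ zero coordinates, so that $x$ has $\mathbf B$-orbit type $(\vartheta,\omega)$ equal to the cut of $\kappa$ at $t$ recalled before the statement, the refined placement problem (zeros into the columns of $\lambda$; the nonzero squared values distributed distinctly per column over $(\lambda,\mu)$) translates, via a two-constraint version of Gale--Ryser, into exactly the two inequality chains defining $\unlhd_B$: the first chain is dominance of $\lambda\uplus\mu$ over $\vartheta\uplus\omega$ and controls the distribution of values, the second encodes the ``zeros into $\lambda$'' constraint. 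This yields $x\notin V_{(\lambda,\mu)}^{\mathbf B}\iff(\vartheta,\omega)\unlhd_B(\lambda,\mu)$, and then the orbit-set formulae and the variety/ideal equivalences follow as in type $A$ ($\mathbf B$-Specht ideals are again radical), with the minor extra care that the ``inclusion $\Rightarrow$ order'' direction for varieties uses that $V_{(\lambda,\mu)}^{\mathbf B}$ determines $(\lambda,\mu)$ up to the canonical cut normal form.

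The main obstacle is the type-$B$ combinatorial lemma: getting the bijective-filling problem --- with the asymmetry between $T$ (absorbs zeros, contributes a trivial monomial) and $S$ (must avoid zeros), the squaring $x\mapsto x^2$, and the pairing of $\pm a$ --- to land precisely on the two-line condition $\unlhd_B$ rather than on some \emph{ad hoc} system of inequalities. Everything else (Gale--Ryser in type $A$, the partition-of-$\KK^n$ bookkeeping, and the Nullstellensatz step modulo the cited radicality) is routine.
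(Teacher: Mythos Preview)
The paper does not give its own proof of this theorem; it is quoted from the cited references without argument, so there is no in-paper proof to compare against. Your type-$A$ outline via Gale--Ryser and radicality is essentially correct, and it matches the spirit of \cite{moustrou2021symmetric,woo2005ideals}.

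There is, however, a genuine gap in type $B$. You close the loop $(C)\Rightarrow(B)$ by asserting that ``$\mathbf B$-Specht ideals are again radical'' and invoking the Nullstellensatz. But radicality of $\mathbf B$-Specht ideals is explicitly recorded in this very paper, and in \cite{debus_moustrou_riener_verdure} as Conjecture~8.1, as an \emph{open} problem; you cannot use it. (A secondary issue: even in type $A$ the Nullstellensatz step needs $\KK$ algebraically closed, whereas the statement is over an arbitrary field of characteristic zero; this is repairable by base change, but you do not say so.) The route actually taken in \cite{debus_moustrou_riener_verdure}, and the one this paper imitates for type $D$, is precisely the ``alternative'' you relegate to a parenthesis: prove $(A)\Rightarrow(B)$ directly by reducing to a covering relation in $\unlhd_B$ and exhibiting each Specht polynomial of the smaller shape as an explicit $\KK[\underline X]$-combination of Specht polynomials of the larger shape; then combine the trivial $(B)\Rightarrow(C)$ with the orbit-type witness argument for $(C)\Rightarrow(A)$. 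This cycle bypasses radicality entirely. For type $B$ you must promote that covering-relation argument from a side remark to the main line of proof.
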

The Specht ideals of the symmetric group are radical \cite{Murai2022Specht, woo2005ideals}, but it is an open question if the ideals for the hyperoctahedral group are radical \cite[Conjecture~8.1]{debus_moustrou_riener_verdure}.
Moreover, the $\mathbf{S}$-Specht ideal $I_\lambda^{\mathbf{S}}$ is Cohen-Macaulay if and only if $\lambda$ is a hook, has length $2$, or $\lambda = (\lambda_1,\lambda_1,1)$ \cite{yanagawa2021specht}.

\section{Partial order}\label{sec:Partial order}

In this section, we introduce a partial order $\unlhd_D$ on the set of dipartitions $\mathcal{D}_n$ for which an analogue of the poset equivalences in \cref{thm:SnBnEquivalence} holds. Note that sets of the form $\{\lambda,\pm\}$ only exist if $n$ is even. 
For instance, we have 
\begin{align*}&\mathcal{D}_2=\{\{(2),\emptyset\},\{(1),+\},\{(1),-\},\{(1,1),\emptyset\}\}, \text{ and } \\ &\mathcal{D}_3 = \{\{(3),\emptyset\},\{(2,1),\emptyset\},\{(1,1,1),\emptyset\},\{(2),(1)\},\{(1,1),(1)\}\}.\end{align*} 
For this purpose, we define an auxiliary poset $\widetilde{\mathcal{D}}_n$ and extend its partial order to $\mathcal{D}_n$.
We denote by $\widetilde{\mathcal{D}}_n$ the set of all multisets $\{\lambda,\mu\}$ with $\lambda \vdash k$ and $\mu \vdash n-k$ for some $0 \leq k \leq n$. If $n$ is odd we have $\mathcal{D}_n = \widetilde{\mathcal{D}}_n$ and otherwise we obtain $\mathcal{D}_n$ from $\widetilde{\mathcal{D}}_n$ by replacing each multiset of the form $\{\lambda,\lambda\}$ in $\widetilde{\mathcal{D}}_n$ with the two distinct sets $\{\lambda,+\}$ and $\{\lambda,-\}$.

\begin{definition}\label{def:DnPoset}
\begin{enumerate}
    \item[(1)] For two multisets $\{\lambda,\mu\},\{\theta,\omega\} \in \widetilde{\mathcal{D}}_n$ we define the relation $\{\lambda,\mu\} \preccurlyeq_D\{\theta, \omega \}$ if 
\[\left((\lambda,\mu) \unlhd_B (\theta,\omega) \text{ or } (\lambda,\mu) \unlhd_B (\omega,\theta) \right) \text{ and } \left((\mu,\lambda) \unlhd_B (\theta,\omega) \text{ or } (\mu,\lambda) \unlhd_B (\omega,\theta) \right). \]
    \item[(2)] We define a binary relation $\unlhd_D$ on $\mathcal{D}_n$ as follows. The sets $\{\lambda,+\}$ and $\{\lambda,-\} \in \mathcal{D}_n$ are incomparable with respect to $\unlhd_D$. For $\lambda \neq \mu$ we define $\{\lambda,\pm\} \unlhd_D \{\mu,\pm\}$ if $\{\lambda,\lambda\} \preccurlyeq_D\{\mu,\mu\}$. Moreover, $\{\lambda,\mu\} \unlhd_D \{\theta,\omega\}$ if $\{\lambda,\mu\} \preccurlyeq_D\{\theta,\omega\}$ and $\{\lambda,\mu\} \left\{ \begin{array}{c}
\unlhd_D \\ 
\unrhd_D
\end{array} \right\} \{\lambda,\pm\}$ if $\{\lambda,\mu\} \left\{ \begin{array}{c}
\preccurlyeq_D \\ 
\succcurlyeq_D
\end{array} \right\}\{\lambda,\lambda\}$.
\end{enumerate}
\end{definition}

\begin{figure}[!htb]
    \centering
    \begin{minipage}{.5\textwidth}
        \centering
    \begin{tikzpicture}
        \node (1) at (0,0) {\footnotesize{$\setn{\emptyset ,(1,1,1,1)}$}}; 
        \node (2) at (0,1.57) {\footnotesize{$\setn{(1),(1,1,1)}$}};
        \node (3) at (-2,3.14) {\footnotesize{$\setn{\emptyset ,(2,1,1)}$}};
        \node (4) at (0,3.14) {\footnotesize{$\setn{(1,1),+}$}};
        \node (5) at (2,3.14) {\footnotesize{$\setn{(1,1),-}$}};
        \node (6) at (-1,4.71) {\footnotesize{$\setn{\emptyset ,(2,2)}$}};
        \node (7) at (1,4.71) {\footnotesize{$\setn{(2),(1,1)}$}};
        \node (8) at (0,6.29) {\footnotesize{$\setn{(1),(2,1)}$}};
        \node (9) at (-2,7.86) {\footnotesize{$\setn{\emptyset ,(3,1)}$}};
        \node (10) at (0,7.86) {\footnotesize{$\setn{(2),+}$}};
        \node (11) at (2,7.86) {\footnotesize{$\setn{(2),-}$}};
        \node (12) at (0,9.43) {\footnotesize{$\setn{(1),(3)}$}};
        \node (13) at (0,11) {\footnotesize{$\setn{\emptyset ,(4)}$}};

        \draw[edge] (1) -- (2);
        \draw[edge] (2) -- (3);
        \draw[edge] (2) -- (4);
        \draw[edge] (2) -- (5);
        \draw[edge] (3) -- (6);
        \draw[edge] (3) -- (7);
        \draw[edge] (4) -- (6);
        \draw[edge] (4) -- (7);
        \draw[edge] (5) -- (6);
        \draw[edge] (5) -- (7);
        \draw[edge] (6) -- (8);
        \draw[edge] (7) -- (8);
        \draw[edge] (8) -- (9);
        \draw[edge] (8) -- (10);
        \draw[edge] (8) -- (11);
        \draw[edge] (9) -- (12);
        \draw[edge] (10) -- (12);
        \draw[edge] (11) -- (12);
        \draw[edge] (12) -- (13);
    \end{tikzpicture}
    \caption{Hasse diagram of $(\mathcal{D}_4,\unlhd_D)$}

    \label{fig:D_4Poset}
    \end{minipage}%
    \begin{minipage}{0.5\textwidth}
        \centering
            \begin{tikzpicture}
        \node (1) at (0,0) {\footnotesize{$\setn{\emptyset ,(1,1,1,1,1)}$}}; 
        \node (2) at (0,1) {\footnotesize{$\setn{(1),(1,1,1,1)}$}};
        \node (3) at (-1.5,2) {\footnotesize{$\setn{(1,1),(1,1,1)}$}};
        \node (4) at (1.5,2) {\footnotesize{$\setn{\emptyset ,(2,1,1,1)}$}};
        \node (5) at (-1.5,3) {\footnotesize{$\setn{\emptyset ,(2,2,1)}$}};
        \node (6) at (1.5,3) {\footnotesize{$\setn{(2),(1,1,1)}$}};
        \node (7) at (0,4) {\footnotesize{$\setn{(1),(2,1,1)}$}};
        \node (8) at (-1.5,5) {\footnotesize{$\setn{(1,1),(2,1)}$}};
        \node (9) at (1.5,5) {\footnotesize{$\setn{\emptyset ,(3,1,1)}$}};
        \node (10) at (-2,6) {\footnotesize{$\setn{(1),(2,2)}$}};
        \node (11) at (2,7) {\footnotesize{$\setn{(1,1),(3)}$}};
        \node (12) at (-2,7) {\footnotesize{$\setn{\emptyset ,(3,2)}$}};
        \node (13) at (0,7) {\footnotesize{$\setn{(2),(2,1)}$}};
        \node (14) at (0,8) {\footnotesize{$\setn{(1),(3,1)}$}};
        \node (15) at (-1,9) {\footnotesize{$\setn{\emptyset ,(4,1)}$}};
        \node (16) at (1,9) {\footnotesize{$\setn{(2),(3)}$}};
        \node (17) at (0,10) {\footnotesize{$\setn{(1),(4)}$}};
        \node (18) at (0,11) {\footnotesize{$\setn{\emptyset ,(5)}$}};

        \draw[edge] (1) -- (2);
        \draw[edge] (2) -- (3);
        \draw[edge] (2) -- (4);
        \draw[edge] (3) -- (5);
        \draw[edge] (3) -- (6);
        \draw[edge] (4) -- (5);
        \draw[edge] (4) -- (6);
        \draw[edge] (5) -- (7);
        \draw[edge] (6) -- (7);
        \draw[edge] (7) -- (8);
        \draw[edge] (7) -- (9);
        \draw[edge] (8) -- (10);
        \draw[edge] (8) -- (11);
        \draw[edge] (9) -- (11);
        \draw[edge] (9) -- (12);
        \draw[edge] (10) -- (12);
        \draw[edge] (10) -- (13);
        \draw[edge] (11) -- (14);
        \draw[edge] (12) -- (14);
        \draw[edge] (13) -- (14);
        \draw[edge] (14) -- (15);
        \draw[edge] (14) -- (16);
        \draw[edge] (15) -- (17);
        \draw[edge] (16) -- (17);
        \draw[edge] (17) -- (18);
    \end{tikzpicture}
    
        \caption{Hasse diagram of $(\mathcal{D}_5,\unlhd_D)$}
        \label{fig:D5_Poset}
    \end{minipage}
\end{figure}

\begin{lemma} \label{lem: D_n is poset}
The set $(\mathcal{D}_n,\unlhd_D)$ is a poset and $\{\emptyset,(1,\ldots,1)\}$ is the minimal element in $\mathcal{D}_n$.
\end{lemma}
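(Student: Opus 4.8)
The plan is to verify the three poset axioms for $\unlhd_D$ on $\mathcal{D}_n$ (reflexivity, antisymmetry, transitivity), reducing each as much as possible to the known fact that $(\mathcal{B}_n,\unlhd_B)$ is a poset and that the auxiliary relation $\preccurlyeq_D$ on $\widetilde{\mathcal{D}}_n$ behaves well. First I would observe that $\preccurlyeq_D$ is reflexive on $\widetilde{\mathcal{D}}_n$ (since $(\lambda,\mu)\unlhd_B(\lambda,\mu)$ gives both clauses) and hence $\unlhd_D$ is reflexive on $\mathcal{D}_n$: for sets $\{\lambda,\mu\}$ this is immediate, and for $\{\lambda,\pm\}$ one uses that $\{\lambda,\lambda\}\preccurlyeq_D\{\lambda,\lambda\}$, while $\{\lambda,+\}$ and $\{\lambda,-\}$ are declared incomparable so reflexivity is the only constraint among them.

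Next I would handle antisymmetry. Suppose $\{\lambda,\mu\}\unlhd_D\{\theta,\omega\}$ and $\{\theta,\omega\}\unlhd_D\{\lambda,\mu\}$. Unwinding the definition gives $\{\lambda,\mu\}\preccurlyeq_D\{\theta,\omega\}$ and $\{\theta,\omega\}\preccurlyeq_D\{\lambda,\mu\}$. From the first clause of $\preccurlyeq_D$ in both directions, $\lambda\uplus\mu$ and $\theta\uplus\omega$ dominate each other in $\mathcal{P}_n$, hence $\lambda\uplus\mu=\theta\uplus\omega$; denote this common partition $\nu$. Now the point is that among bipartitions with fixed fusion $\nu$, the four bipartitions $(\lambda,\mu),(\mu,\lambda),(\theta,\omega),(\omega,\theta)$ are totally controlled by the second family of $\unlhd_B$-inequalities, and I would argue that the $\preccurlyeq_D$-relations in both directions force $\{\lambda,\mu\}=\{\theta,\omega\}$ as multisets. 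The cleanest way is probably to use antisymmetry of $\unlhd_B$ directly: from $(\lambda,\mu)\preccurlyeq_D$-comparable to $\{\theta,\omega\}$ one of $(\lambda,\mu)\unlhd_B(\theta,\omega)$ or $(\lambda,\mu)\unlhd_B(\omega,\theta)$ holds, and similarly in the reverse direction; chaining these and using that $\unlhd_B$ is a partial order on $\mathcal{B}_n$ pins down the unordered pair. Then for the $\{\lambda,\pm\}$ cases antisymmetry reduces to antisymmetry for $\{\lambda,\lambda\}\preccurlyeq_D\{\mu,\mu\}$ plus the convention that the two signs are incomparable, so no new issue arises.

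For transitivity, suppose $\{\lambda,\mu\}\unlhd_D\{\theta,\omega\}\unlhd_D\{\nu,\rho\}$. I would first prove transitivity of $\preccurlyeq_D$ on $\widetilde{\mathcal{D}}_n$: given $\{\lambda,\mu\}\preccurlyeq_D\{\theta,\omega\}$ and $\{\theta,\omega\}\preccurlyeq_D\{\nu,\rho\}$, I must produce, for each of $(\lambda,\mu)$ and $(\mu,\lambda)$, a $\unlhd_B$-path to one of $(\nu,\rho),(\rho,\nu)$. Starting from $(\lambda,\mu)$, by hypothesis it is $\unlhd_B$-below one of $(\theta,\omega)$ or $(\omega,\theta)$; in either case that intermediate bipartition is itself $\unlhd_B$-below one of $(\nu,\rho)$ or $(\rho,\nu)$ by the second hypothesis (this requires knowing $\{\theta,\omega\}\preccurlyeq_D\{\nu,\rho\}$ gives $\unlhd_B$-comparability of \emph{both} orderings of $\{\theta,\omega\}$ to $\{\nu,\rho\}$ — which is exactly what $\preccurlyeq_D$ encodes), and transitivity of $\unlhd_B$ closes the chain; the argument for $(\mu,\lambda)$ is symmetric. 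Once $\preccurlyeq_D$ is transitive, transitivity of $\unlhd_D$ follows by checking the finitely many combinations of "set type" vs. "sign type" for the three elements, each reducing to a $\preccurlyeq_D$ statement about the associated multisets together with the sign-compatibility conventions in \cref{def:DnPoset}(2).

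Finally, for the minimal element: for any $\{\lambda,\mu\}\in\widetilde{\mathcal{D}}_n$ I would show $\{\emptyset,(1^n)\}\preccurlyeq_D\{\lambda,\mu\}$ by checking that $(\emptyset,(1^n))\unlhd_B(\lambda,\mu)$, which amounts to verifying the two chains of $\unlhd_B$-inequalities: the first is the dominance relation $(1^n)\unlhd_S \lambda\uplus\mu$, which holds since $(1^n)$ is minimal in dominance order, and the second chain $0+\sum_{i<j}(0+1)\le \lambda_j+\sum_{i<j}(\lambda_i+\mu_i)$ holds trivially since the left side is $j-1$ and the right side is at least $j-1$ (as $\lambda\uplus\mu$ has $n$ parts all $\ge$ the corresponding $(1^n)$ partial sums). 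Since $\{\emptyset,(1^n)\}$ is not of the form $\{\nu,\nu\}$, it remains a single element $\{\emptyset,(1^n)\}\in\mathcal{D}_n$ and is below every element, including both $\{\nu,\pm\}$'s via the $\{\nu,\nu\}$-comparison. The main obstacle I anticipate is the antisymmetry step and the transitivity of $\preccurlyeq_D$: the "or" built into the definition of $\preccurlyeq_D$ means one cannot simply invoke antisymmetry/transitivity of $\unlhd_B$ mechanically, and one has to carefully track which of the two orderings of each intermediate multiset is being used, possibly splitting into cases according to whether $\lambda=\mu$ or the fusion forces equalities; I would look for a lemma saying that if $(\lambda,\mu)\unlhd_B(\omega,\theta)$ and also $(\mu,\lambda)$ is $\unlhd_B$-comparable to one of $(\theta,\omega),(\omega,\theta)$, then in fact the comparison is "coherent", which should make the case analysis collapse.
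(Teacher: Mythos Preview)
Your approach is essentially the paper's: show $\preccurlyeq_D$ is a partial order on $\widetilde{\mathcal{D}}_n$ by reducing to properties of $\unlhd_B$, then observe that passing to $\mathcal{D}_n$ merely splits each node $\{\lambda,\lambda\}$ into two incomparable copies with the same upper and lower neighbors, which cannot create failures of antisymmetry or transitivity.

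Two small points. First, for antisymmetry you do not need the fusion observation or any ``coherence lemma''; the paper simply chains the $\unlhd_B$-relations directly: assuming $(\lambda,\mu)\unlhd_B(\theta,\omega)$, either $(\theta,\omega)\unlhd_B(\lambda,\mu)$ (done) or $(\theta,\omega)\unlhd_B(\mu,\lambda)$, and in the latter case one continues the chain two more steps using the remaining hypotheses until antisymmetry of $\unlhd_B$ forces equality of unordered pairs. No auxiliary lemma is required, only a short case split. Second, for the minimal element your check of $(\emptyset,(1^n))\unlhd_B(\lambda,\mu)$ alone is not enough: the definition of $\preccurlyeq_D$ also demands that $((1^n),\emptyset)$ lie $\unlhd_B$-below one of $(\lambda,\mu),(\mu,\lambda)$, and this can fail for the given ordering (e.g.\ $((1^n),\emptyset)\not\unlhd_B(\emptyset,(n))$). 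The paper fixes this by assuming without loss of generality $\len(\lambda)\ge\len(\mu)$ and then verifying $((1^n),\emptyset)\unlhd_B(\lambda,\mu)$; combined with $(\emptyset,(1^n))\unlhd_B((1^n),\emptyset)$ and transitivity of $\unlhd_B$, both orderings of $\{\emptyset,(1^n)\}$ are then below $(\lambda,\mu)$.
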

\begin{proof}
First, we verify that the relation $\preccurlyeq_D$ is a partial order on the set $\widetilde{{\mathcal{D}}}_n$.
Recall that $\unlhd_B$ is a partial order on $\mathcal{B}_n$ \cite[Section~4]{debus_moustrou_riener_verdure}. 
Let $\{\lambda,\mu\},\{\theta,\omega\} \in \widetilde{\mathcal{D}}_n$.
\begin{enumerate}
    \item[(i)] Reflexivity and transitivity of $\preccurlyeq_D$ follow immediately from the analogous properties of $\unlhd_B$.
    \item[(ii)] It is left to show that $\preccurlyeq_D$ is anti-symmetric. Suppose we have $\{\lambda,\mu\} \preccurlyeq_D \{ \theta,\omega\} \preccurlyeq_D \{ \lambda,\mu\}$. 
    Without loss of generality we suppose $(\lambda,\mu) \unlhd_B (\theta,\omega)$.
    Then we have 
    \begin{align*}
    (\lambda,\mu) \unlhd_B (\theta,\omega) \unlhd_B (\lambda,\mu) \text{ or }
    (\lambda,\mu) \unlhd_B (\theta,\omega) \unlhd_B (\mu,\lambda).
    \end{align*}
However, the relation $\unlhd_B$ on the set of bipartitions of $n$ is anti-symmetric, since it is a partial order. In the first case we have $(\lambda,\mu)=(\theta,\omega)$ which implies $\{ \lambda,\mu\} = \{ \theta,\omega\}$. 
If the second chain of relations holds we have
    \begin{align*}
    (\lambda,\mu) \unlhd_B (\theta,\omega) \unlhd_B (\mu,\lambda) \unlhd_B (\theta,\omega) \text{ or }
    (\lambda,\mu) \unlhd_B (\theta,\omega) \unlhd_B (\mu,\lambda) \unlhd_B (\omega,\theta). 
    \end{align*}
    Again, the first case results in $\{\lambda,\mu\}=\{\theta,\omega\}$. In the second case, we have 
        \begin{align*}
    (\lambda,\mu) \unlhd_B (\theta,\omega) \unlhd (\mu,\lambda) \unlhd_B (\omega,\theta)  \unlhd_B (\lambda,\mu) \text{ or }
    (\lambda,\mu) \unlhd_B (\theta,\omega) \unlhd_B (\mu,\lambda) \unlhd_B (\omega,\theta)  \unlhd_B (\mu,\lambda). 
    \end{align*}
    and both cases imply $\{\lambda,\mu\}=\{ \theta,\omega\}$. 
\end{enumerate} 
Finally, note that going from the poset $(\widetilde{\mathcal{D}}_n,\preccurlyeq_D)$ to the poset $(\mathcal{D}_n,\unlhd_D)$ means that we replace nodes of the form $\{\lambda,\lambda\}$ by two mutually incomparable nodes, and they have the same parent and children nodes in the Hasse diagram.
Reflexivity and antisymmetry remain clearly true and any directed cycle must use both nodes $\{\lambda,+\}$ and $\{\lambda,-\}$. 
This cannot be the case since they are defined as incomparable in $(\mathcal{D}_n,\unlhd_D)$. \\
Observe that for every bipartition $(\lambda,\mu)$ of $n$ with $l:=\len(\lambda)\geq \len(\mu)=:k$ we have
$\lambda_{j+1} + \sum_{i=1}^j (\lambda_i+\mu_i) \geq j+1$ for all $1 \leq j < l$ and for $j \geq l$ the sum is $n$.
Thus, we have $(\emptyset,(1,\ldots,1)) \unlhd_B ((1,\ldots,1),\emptyset) \unlhd_B (\lambda,\mu)$ which shows that $\{\emptyset,(1,\ldots,1)\}$ is indeed the minimal element in $\mathcal{D}_n$.
\end{proof}

The following technical proposition will be used in our proof of the equivalence of posets for type $D$ in \cref{thm:main}. We implicitly assume that $n$ is even.

\begin{proposition} \label{prop:Covering lambda lambda}
 Let $\{\lambda, \pm\} \unrhc_D \Theta$ be a covering relation in the poset $(\mathcal{D}_n,\unlhd_D)$. 
 Then there exist an integer $1 \leq p \leq \len (\lambda)=:l$, partitions $\theta = (\lambda_1,\ldots,\lambda_{p-1},\lambda_p-1,\lambda_{p+1},\ldots,\lambda_l)$ and $\omega =  (\lambda_1,\ldots,\lambda_{p},\lambda_{p+1}+1,\lambda_{p+2},\ldots)$ such that $\Theta = \{\theta,\omega\}$.
\end{proposition}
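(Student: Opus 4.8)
The plan is to transfer the question to the auxiliary poset $(\widetilde{\mathcal{D}}_n,\preccurlyeq_D)$ and classify there the elements covered by $\setn{\lambda,\lambda}$. Write $X\prec_D Y$ for ``$X\preccurlyeq_D Y$ and $X\ne Y$''. As observed in the proof of \cref{lem: D_n is poset}, $(\mathcal{D}_n,\unlhd_D)$ is obtained from $(\widetilde{\mathcal{D}}_n,\preccurlyeq_D)$ by splitting each node $\setn{\lambda,\lambda}$ into two incomparable nodes $\setn{\lambda,+},\setn{\lambda,-}$ with the same parents and children; consequently a covering $\setn{\lambda,\pm}\unrhc_D\Theta$ in $\mathcal{D}_n$ is the same datum as a covering $\setn{\lambda,\lambda}\unrhc_D\widehat\Theta$ in $\widetilde{\mathcal{D}}_n$, with $\widehat\Theta=\Theta$ if $\Theta=\setn{\theta,\omega}$ for distinct $\theta,\omega$ and $\widehat\Theta=\setn{\mu,\mu}$ if $\Theta=\setn{\mu,\pm}$. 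So it suffices to show that any $\widehat\Theta\in\widetilde{\mathcal{D}}_n$ covered by $\setn{\lambda,\lambda}$ equals some $\setn{\theta,\omega}$ with $\theta,\omega$ as in the statement. Since such a pair has parts of the two distinct sizes $\tfrac n2\mp 1$, this also shows $\widehat\Theta$ is never of the shape $\setn{\mu,\mu}$, i.e.\ $\Theta$ is never of the form $\setn{\mu,\pm}$.

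Step two is an explicit description of $\preccurlyeq_D\setn{\lambda,\lambda}$. For $\setn{\theta,\omega}\in\widetilde{\mathcal{D}}_n$ set $L_j:=\sum_{i\le j}\lambda_i$, $A_j:=\sum_{i\le j}(\theta_i+\omega_i)$ and $s_j:=2L_j-A_j$, with $s_0:=0$ (note $s_n=0$). Unwinding \cref{def:DnPoset}(1) and the definition of $\unlhd_B$ --- the two disjunctions in $\preccurlyeq_D$ collapse because the target $\setn{\lambda,\lambda}$ is symmetric --- yields
\[
\setn{\theta,\omega}\preccurlyeq_D\setn{\lambda,\lambda}\iff s_j\ge 0\ \text{ and }\ \max(\theta_j,\omega_j)\le\lambda_j+s_{j-1}\ \text{ for all }j\ge 1.
\]
In particular, if $\setn{\theta,\omega}\preccurlyeq_D\setn{\lambda,\lambda}$ and all $s_j=0$, then $\theta_j+\omega_j=2\lambda_j$ and $\max(\theta_j,\omega_j)\le\lambda_j$ for every $j$, forcing $\theta=\omega=\lambda$; hence $\setn{\theta,\omega}\prec_D\setn{\lambda,\lambda}$ implies $s_{j_0}>0$ for some $j_0$.

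The core step shows that every $\setn{\theta,\omega}\prec_D\setn{\lambda,\lambda}$ lies $\preccurlyeq_D$ below a candidate dipartition. Put $p_0:=\min\setn{j:s_j>0}$. The characterization gives $\theta_j=\omega_j=\lambda_j$ for $j<p_0$, and at $j=p_0$ it gives $\max(\theta_{p_0},\omega_{p_0})\le\lambda_{p_0}$ and $\theta_{p_0}+\omega_{p_0}<2\lambda_{p_0}$; relabelling so that $\theta_{p_0}\le\omega_{p_0}$ we get $\theta_{p_0}\le\lambda_{p_0}-1$ and $\omega_{p_0}\le\lambda_{p_0}$. Let $p$ be the largest index with $\lambda_p=\lambda_{p_0}$, so $\lambda_p>\lambda_{p+1}$; since $\theta,\omega$ are weakly decreasing we get $\theta_j\le\lambda_j-1$ and $\omega_j\le\lambda_j$ for all $p_0\le j\le p$, and combining this with $A_{p_0-1}=2L_{p_0-1}$ gives $A_p\le 2L_p-1$, i.e.\ $s_p\ge 1$. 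Now take $\theta^\ast=(\lambda_1,\dots,\lambda_{p-1},\lambda_p-1,\lambda_{p+1},\dots,\lambda_l)$ and $\omega^\ast=(\lambda_1,\dots,\lambda_p,\lambda_{p+1}+1,\lambda_{p+2},\dots)$; these are partitions precisely because $p$ is a corner of $\lambda$, the slack sequence of $\setn{\theta^\ast,\omega^\ast}$ relative to $\setn{\lambda,\lambda}$ is $s^\ast_j=\delta_{j,p}$, and so the characterization gives $\setn{\theta^\ast,\omega^\ast}\prec_D\setn{\lambda,\lambda}$. Finally $\setn{\theta,\omega}\preccurlyeq_D\setn{\theta^\ast,\omega^\ast}$: by \cref{def:DnPoset}(1) it is enough to check $(\theta,\omega)\unlhd_B(\omega^\ast,\theta^\ast)$ and $(\omega,\theta)\unlhd_B(\omega^\ast,\theta^\ast)$, which, since $\sum_{i\le j}(\omega^\ast_i+\theta^\ast_i)=2L_j-s^\ast_j$, amount for each $j$ to $s_j\ge s^\ast_j$ and $\theta_j,\omega_j\le\omega^\ast_j+(s_{j-1}-s^\ast_{j-1})$; one verifies these by casework on $j<p_0$, $p_0\le j\le p$, $j=p+1$, $j>p+1$, feeding in the constraints just recorded together with $\max(\theta_j,\omega_j)\le\lambda_j+s_{j-1}$. (Comparing with $(\theta^\ast,\omega^\ast)$ instead of $(\omega^\ast,\theta^\ast)$ would fail at $j=p$, so the unordered nature of $\setn{\theta^\ast,\omega^\ast}$ is essential.) Thus $\setn{\theta,\omega}\preccurlyeq_D\setn{\theta^\ast,\omega^\ast}\prec_D\setn{\lambda,\lambda}$, so if $\setn{\theta,\omega}$ is covered by $\setn{\lambda,\lambda}$ it must equal $\setn{\theta^\ast,\omega^\ast}$ --- precisely the asserted form.

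The step I expect to be most delicate is the final comparison: handling blocks of equal parts of $\lambda$ (placing $p$ at the right corner, and bounding how far $\theta$ and $\omega$ may dip below $\lambda$ across $\lambda_{p_0}=\dots=\lambda_p$), and then running the full list of bidominance inequalities against the correctly oriented pair $(\omega^\ast,\theta^\ast)$. The passage to $\widetilde{\mathcal{D}}_n$ and the derivation of the $s_j$-criterion are routine expansions of the definitions of $\unlhd_B$ and $\preccurlyeq_D$.
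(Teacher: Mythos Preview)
Your proposal is correct and follows essentially the same strategy as the paper: find the first index $p_0$ where $\{\theta,\omega\}$ deviates from $\{\lambda,\lambda\}$, push to the end of the constant run of $\lambda$ to locate a corner $p$, construct the candidate $\{\theta^\ast,\omega^\ast\}$, and verify $\{\theta,\omega\}\preccurlyeq_D\{\theta^\ast,\omega^\ast\}\prec_D\{\lambda,\lambda\}$ by checking the bidominance inequalities against $(\omega^\ast,\theta^\ast)$. Your slack sequence $s_j=2L_j-A_j$ and the explicit characterization $\{\theta,\omega\}\preccurlyeq_D\{\lambda,\lambda\}\iff s_j\ge 0$ and $\max(\theta_j,\omega_j)\le\lambda_j+s_{j-1}$ make the verification cleaner than the paper's more terse treatment, and your handling of the reduction to $\widetilde{\mathcal D}_n$ and the exclusion of $\Theta=\{\mu,\pm\}$ (via the size discrepancy $\tfrac n2\mp1$) is more transparent than the paper's appeal to non-minimality; but the underlying argument is the same.
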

Note that we must have $\lambda_p > \lambda_{p+1}$. 
Only in this case $\theta$ and $\omega$ are indeed partitions.
Observe that $(\lambda,\lambda)\unrhc_B (\omega,\theta) \unrhc_B (\theta,\omega)$ holds.
In particular, we can conclude that if $\{\lambda,\pm\}$ covers $\Theta \in \mathcal{D}_n$, then $\Theta = \{\theta,\omega\}$ for distinct partitions $\theta$ and $\omega$.
We point out that, when identifying dipartitions with multisets of two diagrams, any such covering described in \cref{prop:Covering lambda lambda} corresponds to moving the last box from row $p$ of one of the diagrams of shape $\lambda$ to the other diagram into row $p+1$. 
We visualize this in the following example.

\begin{example}
In the poset $(\mathcal{D}_6,\unlhd_D)$ the dipartition $\{(2,1),\pm\}$ covers two dipartitions. 
Using diagram notation we have $\left\{\ytableausetup{smalltableaux,centertableaux}
\begin{ytableau}
\empty & *(red)  \\
*(violet) & \none
\end{ytableau},\pm\right\} $ covers the two dipartitions $ \left\{\ytableausetup{smalltableaux,centertableaux}
\begin{ytableau}
\empty   \\
*(white)
\end{ytableau}~ , ~\begin{ytableau}
\empty & \empty   \\
\empty & *(red)
\end{ytableau} \right\}$, where $(p,q) =(1,2)$,
and 
$\left\{
\ytableausetup{smalltableaux,centertableaux}
\begin{array}{cc}
\begin{ytableau}
\empty & \empty  \\
\none \\
\none
\end{ytableau}
~,~&
\begin{ytableau}
\empty & \empty  \\
\empty & \none \\
*(violet) & \none
\end{ytableau}
\end{array}
\right\}$, where $(p,q)=(2,3)$.
\end{example}

\begin{proof}[Proof of Proposition \ref{prop:Covering lambda lambda}]
We show that whenever we have $\{\lambda,\pm\} \unrhc_D \{\theta',\omega'\}$ in the poset $(\mathcal{D}_n,\unlhd_D)$, there exists a dipartiton $\{\theta,\omega\}$ as described in the \cref{prop:Covering lambda lambda} which satisfies $\{\lambda,\pm\} \unrhc_D  \{\theta,\omega\} \unrhd_D \{\theta',\omega'\}$. 
Thus, any cover must be of this form.
It sufficies to assume $\theta' \neq \omega'$, because a dipartition $\{\rho,\pm\}$ cannot be the minimal element in the poset by \cref{lem: D_n is poset}. \\
Suppose we have $\{\lambda,\pm\} \unrhc_D \{\theta',\omega'\}$. 
Let $l\coloneqq \len (\lambda)$, $p'\coloneqq  \min\{ i \, \, : \, \, \theta_i' \neq \lambda_i ~ \text{or} ~ \omega_i' \neq \lambda_i\}$ and without loss of generality we can suppose $\lambda_{p'} > \theta_{p'}'$. 
Let $p \geq p'$ be minimal with $\lambda_p > \lambda_{p+1}$.
We define $\theta\coloneqq  (\lambda_1,\ldots,\lambda_{p-1},\lambda_p-1,\lambda_{p+1},\ldots,\lambda_l)$ and $\omega \coloneqq  (\lambda_1,\ldots,\lambda_{p},\lambda_{p+1}+1,\lambda_{p+2},\ldots)$. 
Since $\theta, \omega$ are partitions of $\frac{n}{2}-1$ and $\frac{n}{2}+1$, respectively, we have $\{\theta,\omega\} \in \mathcal{D}_n$ and we verify $\{\theta',\omega'\} \preccurlyeq_D \{\theta,\omega\} \preccurlyeq_D \{\lambda,\lambda\}$. 
But the latter inequality can be seen from $(\theta,\omega) \unlhc_B (\omega,\theta) \unlhc_B (\lambda,\lambda)$. 
So we verify $\{\theta',\omega'\} \preccurlyeq_D \{\theta,\omega\}$. 
We claim that
\begin{align*}
    \sum_{i=1}^k (\theta_i'+\omega_i') & \leq \sum_{i=1}^k (\theta_i+\omega_i), \\
    \max\{\omega_{k+1}',\theta_{k+1}'\} + \sum_{i=1}^k (\omega_i'+\theta_i') & \leq \omega_{k+1} + \sum_{i=1}^k (\theta_i+\omega_i)  
\end{align*}
hold for every integer $k$. 
If $k<p$ or $k\geq p+1$ the right hand side sums equal those of $(\lambda,\lambda)$ and therefore the inequalities follow from $\{\lambda,\pm\} \unrhd_D \{\theta',\omega'\}$. \\
For $k=p$ we have $\theta_k' \leq \lambda_p-1 = \theta_k$ and $\omega_k' \leq \lambda_p = \omega_k$ which verifies that both inequalities also hold. 
This concludes the proof, as $\{\theta,\omega\}$ is of the claimed form. 
\end{proof}
The following corollary will be used in \cref{sec:PosetD_n}.

\begin{corollary}\label{cor:dominance and didomiance}
Let $\lambda \uplus \lambda \unrhc_S \kappa $ be a cover in the poset $\mathcal{P}_n$. Then $\kappa$ is the fusion of partitions $\theta \vdash \frac{n}{2}-1, \omega \vdash \frac{n}{2}+1$ for which $\{\lambda,\pm\} \unrhc_D \{ \theta, \omega\}$ is a cover in the poset $\mathcal{D}_n$. 
\end{corollary}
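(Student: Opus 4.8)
\textbf{Proof plan for \cref{cor:dominance and didomiance}.}
The plan is to reduce the corollary to \cref{prop:Covering lambda lambda} by translating the combinatorics of a dominance cover $\lambda\uplus\lambda\unrhc_S\kappa$ into the box-moving description that appears in that proposition. First I would recall the standard fact about the dominance order on $\mathcal{P}_n$: a cover $\rho\unrhc_S\kappa$ is obtained by moving a single box from row $p$ of $\rho$ down to row $q$, where either $q=p+1$, or $\rho_p=\rho_{p+1}=\cdots=\rho_{q-1}>\rho_q$ and the box lands in row $q$; equivalently, in the ``adjacent'' normalization one may always take a cover to move one box from the end of row $p$ to the start of row $p+1$ with $\rho_p>\rho_{p+1}$ (possibly after passing through a string of equal parts). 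Applying this to $\rho=\lambda\uplus\lambda$, which by definition has every part appearing an even number of times (it is the reordering of two copies of $\lambda$), I would observe that the row indices $p$ at which $(\lambda\uplus\lambda)_p>(\lambda\uplus\lambda)_{p+1}$ occur exactly at the ``doubled'' boundaries corresponding to the distinct part values of $\lambda$; a box-move cover of $\lambda\uplus\lambda$ therefore removes a box from one such boundary row and adds it to the row just below.

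Next I would write $\kappa$ explicitly. If the cover moves a box at the boundary corresponding to the value $\lambda_p$ (so $\lambda_p>\lambda_{p+1}$ in $\lambda$), then $\kappa=(\lambda\uplus\lambda)$ with one of its two copies of the value $\lambda_p$ decreased by one and one of its two copies of the value $\lambda_{p+1}$ increased by one — which is precisely the fusion $\theta\uplus\omega$ where $\theta=(\lambda_1,\ldots,\lambda_{p-1},\lambda_p-1,\lambda_{p+1},\ldots,\lambda_l)$ and $\omega=(\lambda_1,\ldots,\lambda_p,\lambda_{p+1}+1,\lambda_{p+2},\ldots)$, in the notation of \cref{prop:Covering lambda lambda}. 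Here I need to check carefully that $\theta$ and $\omega$ are genuine partitions (non-increasing), which holds precisely because $\lambda_p>\lambda_{p+1}$, so decreasing $\lambda_p$ by one keeps $\theta$ weakly decreasing and increasing $\lambda_{p+1}$ by one keeps $\omega$ weakly decreasing ($\lambda_{p+1}+1\le\lambda_p$). Then by the remark following \cref{prop:Covering lambda lambda} — that moving the last box of row $p$ of one diagram of shape $\lambda$ into row $p+1$ of the other gives exactly a cover $\{\lambda,\pm\}\unrhc_D\{\theta,\omega\}$ — the pair $\{\theta,\omega\}$ is covered by $\{\lambda,\pm\}$ in $(\mathcal{D}_n,\unlhd_D)$.

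The one genuine verification step is the converse direction packaged into ``is a cover'': I must confirm that $\{\lambda,\pm\}\unrhd_D\{\theta,\omega\}$ is actually a \emph{covering} relation in $\mathcal{D}_n$, not merely a comparability. For this I would invoke \cref{prop:Covering lambda lambda} in the form already proved there: every pair $\{\theta,\omega\}$ of the displayed shape with $\lambda_p>\lambda_{p+1}$ is covered by $\{\lambda,\pm\}$ (the proof of \cref{prop:Covering lambda lambda} shows each such $\{\theta,\omega\}$ is below $\{\lambda,\pm\}$ with nothing strictly between, since any dipartition $\{\theta',\omega'\}$ with $\{\theta,\omega\}\unlhd_D\{\theta',\omega'\}\unlhd_D\{\lambda,\pm\}$ is forced to equal one of the two). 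Alternatively, and perhaps cleaner, I would run the argument entirely through the bijection between dominance covers of $\lambda\uplus\lambda$ and the data $(p$ with $\lambda_p>\lambda_{p+1})$, noting this same data indexes the covers below $\{\lambda,\pm\}$ by \cref{prop:Covering lambda lambda}, so the map $\kappa\mapsto\{\theta,\omega\}$ is well-defined and lands on covers. The main obstacle I anticipate is purely bookkeeping: matching the ``move a box past a run of equal parts'' normalization of dominance covers in $\mathcal{P}_n$ against the ``$p$ minimal with $\lambda_p>\lambda_{p+1}$'' normalization used in \cref{prop:Covering lambda lambda}, and making sure the doubling in $\lambda\uplus\lambda$ does not introduce spurious covers (e.g. a box-move between the two equal copies of a value, which is not a cover). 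Handling that cleanly is where the care is needed; the rest follows formally from \cref{prop:Covering lambda lambda}.
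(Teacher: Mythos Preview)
Your proposal has a genuine gap rooted in how you read $\lambda\uplus\lambda$. You take it to be the concatenation-and-sort $(\lambda_1,\lambda_1,\ldots,\lambda_l,\lambda_l)$, following the definition in Section~3.1. But the paper's proof of this corollary, and its downstream use (see \cref{def:sign}, where the paper writes $\lambda\uplus\lambda=(2\lambda_1,\ldots,2\lambda_l)$ explicitly), actually uses the coordinate-wise sum. Under your reading the central step fails: for $\lambda=(2,1)$ the partition $(2,2,1,1)$ has the \emph{unique} dominance cover $(2,1,1,1,1)$, obtained by a non-adjacent Brylawski move from row~$2$ past the run of $1$'s to row~$5$, and $(2,1,1,1,1)$ is not the fusion of any $\{\theta,\omega\}$ covered by $\{(2,1),\pm\}$ in $\mathcal{D}_6$ (those covers are $\{(1,1),(2,2)\}$ and $\{(2),(2,1,1)\}$, each with concatenation-fusion $(2,2,1,1)$). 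So the obstacle you dismiss as ``purely bookkeeping'' is actually fatal under the concatenation reading.

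The paper's argument works precisely because with $\lambda\uplus\lambda=(2\lambda_1,\ldots,2\lambda_l)$ every part is even. Brylawski's criterion for a non-adjacent cover requires the intermediate parts to equal $(\lambda\uplus\lambda)_i-1$, which is odd; hence no such intermediate parts can exist and $j=i+1$ is forced. Once that is established, your outline --- set $\theta=(\lambda_1,\ldots,\lambda_i-1,\ldots)$, $\omega=(\lambda_1,\ldots,\lambda_{i+1}+1,\ldots)$ and invoke \cref{prop:Covering lambda lambda} to certify the $\mathcal{D}_n$-cover --- is exactly the paper's route. In short, with the coordinate-sum interpretation your plan coincides with the paper's proof; the parity argument ruling out non-adjacent covers is the one genuine idea you are missing.
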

\begin{proof}
Since $\lambda \uplus \lambda$ covers $\kappa$ there are integers $i < j$ such that $2\lambda_i-1 = \kappa_i, 2\lambda_j+1 = \kappa_j$, and $2\lambda_k = 2\lambda_i-1$ for all $i < k < j$, and $2\lambda_k = \kappa_k$ holds for all $k \not\in \{i,j\}$ by \cite[Proposition~2.3]{brylawski1973lattice}.
Observe that $2\lambda_k$ is even while $2\lambda_i-1$ is odd which implies $j = i+1$.
Consider the dipartition $\{\theta,\omega\}$ with $\theta =(\lambda_1,\ldots,\lambda_{i-1},\lambda_i-1,\lambda_{i+1},\ldots)$ and $\omega = (\lambda_1,\ldots,\lambda_{i},\lambda_{i+1}+1, \lambda_{i+2},\ldots)$.
Observe that $\kappa = \theta \uplus \omega$ and we have $\{\lambda,\pm\} \unrhc_D \{\theta,\omega\}$.
We claim that the latter relation is a cover in $\mathcal{D}_n$.
This follows now from \cref{prop:Covering lambda lambda}, since if $\{\lambda,\pm \} \unrhc_D \{\theta', \omega'\} \unrhd_D \{\theta,\omega\}$ holds and the first relation is a cover, then $\theta' \neq \omega'$ are partitions for which $\lambda \uplus \lambda \unrhc_S \theta' \uplus \omega' \unrhd_S \omega \uplus \theta = \kappa$. 
But this implies that $\kappa = \theta'\uplus \omega'$. 
Again applying \cref{prop:Covering lambda lambda} shows that indeed $\{\theta', \omega'\} = \{\omega, \theta\}$.
\end{proof}

\section{The posets of $\mathbf{D}$-Specht ideals and their varieties}\label{sec:PosetD_n}
Our main results for $\mathbf{D}$-Specht ideals are that the same chain of equivalences holds as for $\mathbf{S}$- and $\mathbf{B}$-Specht ideals (\cref{thm:main}), and that the $\mathbf{D}$-Specht varieties cannot be expressed as a disjoint union of orbit sets using the partial order $\unlhd_D$ and, in fact, no partial order would work (\cref{thm:nounionanalogue}).
Nevertheless, a weaker characterization of the varieties in terms of $\mathbf{B}$-orbit sets and additional points in $\KK^*$ with certain formal sign structure defined in \cref{def:sign} (see \cref{prop:Specht varieties I}).

\begin{theorem} \label{thm:main}
Let $\Lambda,\Theta \in \mathcal{D}_n$. 
The following assertions are equivalent in characteristic zero:
 \begin{enumerate}
    \item[(A)] $\Theta \unlhd_D \Lambda$.
    \item[(B)] $ I_\Theta^{\mathbf{D}}\subseteq I_\Lambda^{\mathbf{D}}$.
    \item[(C)] $V_\Lambda^\mathbf{D} \subseteq V_\Theta^\mathbf{D}$. 
\end{enumerate}   
\end{theorem}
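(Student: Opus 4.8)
The strategy is to prove the cycle (A)$\Rightarrow$(B)$\Rightarrow$(C)$\Rightarrow$(A). The implication (B)$\Rightarrow$(C) is immediate, since passing to varieties reverses inclusions. For (A)$\Rightarrow$(B) I would reduce to covering relations: in the finite poset $(\mathcal{D}_n,\unlhd_D)$ any relation $\Theta\unlhd_D\Lambda$ refines into a chain of covers and ideal inclusion is transitive, so it suffices to treat $\Theta\unlhc_D\Lambda$. By \cref{prop:Covering lambda lambda} (and symmetry, using that a signed dipartition can be neither covered by nor cover another signed dipartition) every cover has one of three shapes: (i) both $\Theta=\{\theta,\omega\}$ and $\Lambda=\{\lambda,\mu\}$ are non-signed; (ii) $\Lambda=\{\lambda,\pm\}$ is signed and covers the non-signed $\Theta=\{\theta,\omega\}$; (iii) $\Theta=\{\lambda,\pm\}$ is signed and is covered by the non-signed $\Lambda=\{\theta,\omega\}$. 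Case (i) follows directly (even without the covering hypothesis) from \cref{thm:SnBnEquivalence}: the definition of $\preccurlyeq_D$ gives, for each of $(\theta,\omega)$ and $(\omega,\theta)$, a $\unlhd_B$-relation into $(\lambda,\mu)$ or $(\mu,\lambda)$, hence $I^{\mathbf B}_{(\theta,\omega)},I^{\mathbf B}_{(\omega,\theta)}\subseteq I^{\mathbf B}_{(\lambda,\mu)}+I^{\mathbf B}_{(\mu,\lambda)}=I^{\mathbf D}_{\{\lambda,\mu\}}$, so $I^{\mathbf D}_{\{\theta,\omega\}}\subseteq I^{\mathbf D}_{\{\lambda,\mu\}}$.

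Cases (ii) and (iii) are the core. \cref{prop:Covering lambda lambda} pins down the combinatorics of the move: one removes the last box of row $p$ from one copy of $\lambda$ and attaches it as the last box of row $p+1$ of the other copy. I would establish an explicit straightening identity writing a generator of the smaller ideal as a $\KK[\underline X]$-combination of generators of the larger one, mimicking the Garnir-type relations underlying \cref{thm:SnBnEquivalence} for $S_n$ and $B_n$ applied to the factors $\spe^{\mathbf S}_{\bullet}(\underline X^2)$, but carrying along the extra factor $\prod_{i\in T}X_i\pm\prod_{j\in S}X_j$ and checking that the resulting combination is again a $\mathbf D$-Specht polynomial of the target shape with a consistent sign. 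A useful device is the identity $\bigl(\prod_{i\in T}X_i+\prod_{j\in S}X_j\bigr)\bigl(\prod_{i\in T}X_i-\prod_{j\in S}X_j\bigr)=\prod_{i\in T}X_i^2-\prod_{j\in S}X_j^2$, a polynomial in $\underline X^2$, which lets one pass between the two $\pm$-versions and reduce part of the computation to a statement about $\spe^{\mathbf S}$-polynomials in the squared variables.

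For (C)$\Rightarrow$(A) I would argue the contrapositive: given $\Theta\not\unlhd_D\Lambda$, construct $x\in V^{\mathbf D}_\Lambda\setminus V^{\mathbf D}_\Theta$. If both dipartitions are non-signed then $V^{\mathbf D}_{\{\lambda,\mu\}}=V^{\mathbf B}_{(\lambda,\mu)}\cap V^{\mathbf B}_{(\mu,\lambda)}$, which by \cref{thm:SnBnEquivalence} is the union of $\mathbf B$-orbit sets $O(\rho)$ over the cuts $\rho$ with $\rho\not\unlhd_B(\lambda,\mu)$ and $\rho\not\unlhd_B(\mu,\lambda)$; unpacking $\Theta\not\unlhd_D\Lambda$ (without loss of generality $(\theta,\omega)\not\unlhd_B(\lambda,\mu)$ and $(\theta,\omega)\not\unlhd_B(\mu,\lambda)$) and adapting the orbit-type constructions from the $\mathbf B$-case to the intersection of two $\mathbf B$-varieties yields a witnessing cut $\rho$. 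When a signed dipartition is involved, one uses $V^{\mathbf B}_{(\lambda,\lambda)}\subseteq V^{\mathbf D}_{\{\lambda,\pm\}}$ together with $I^{\mathbf D}_{\{\lambda,+\}}+I^{\mathbf D}_{\{\lambda,-\}}=I^{\mathbf B}_{(\lambda,\lambda)}$: taking a point $y\in V^{\mathbf D}_\Lambda$ lying outside the relevant $V^{\mathbf B}_{(\theta,\theta)}$ or $V^{\mathbf B}_{(\theta,\omega)}$, and twisting it by a sign vector $\varepsilon\in\{\pm1\}^n$ — using $\spe^{\mathbf B}_{(T,S)}(\varepsilon\ast y)=\bigl(\prod_{j\in S}\varepsilon_j\bigr)\spe^{\mathbf B}_{(T,S)}(y)$ and the $B_n$-invariance of $V^{\mathbf D}_\Lambda$ — keeps $\varepsilon\ast y$ in $V^{\mathbf D}_\Lambda$ while making some $\mathbf D$-Specht polynomial of $\Theta$ non-vanishing. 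The genuinely new and hardest sub-case is $\Lambda=\{\lambda,+\}$ versus $\Theta=\{\lambda,-\}$ (and symmetrically): here one must show $V^{\mathbf D}_{\{\lambda,+\}}\neq V^{\mathbf D}_{\{\lambda,-\}}$, equivalently $V^{\mathbf D}_{\{\lambda,\pm\}}\supsetneq V^{\mathbf B}_{(\lambda,\lambda)}$, by exhibiting a point at which every $\spe^{\mathbf D}_{\{(T,S),+\}}$ vanishes but some $\spe^{\mathbf B}_{(T,S)}$ does not; such a point necessarily has a degenerate value of $\underline X^2$ and a specific sign pattern, and this is precisely the formal sign structure isolated in \cref{def:sign} and \cref{prop:Specht varieties I}, which I would invoke.

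I expect the main obstacle, at both ends of the cycle, to be the interaction of the product factor $\prod_{i\in T}X_i\pm\prod_{j\in S}X_j$ with operations that are routine in types $A$ and $B$: for (A)$\Rightarrow$(B) one must check that box-moving relations survive multiplication by this factor and remain inside the $\mathbf D$-Specht ideal, and for (C)$\Rightarrow$(A) one must produce sign-sensitive witness points, in particular proving the strict inequality $V^{\mathbf D}_{\{\lambda,+\}}\neq V^{\mathbf D}_{\{\lambda,-\}}$, which has no counterpart in the $S_n$ or $B_n$ theory.
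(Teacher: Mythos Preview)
Your global architecture --- the cycle (A)$\Rightarrow$(B)$\Rightarrow$(C)$\Rightarrow$(A), the reduction of (A)$\Rightarrow$(B) to covers, the use of \cref{thm:SnBnEquivalence} for the purely non-signed case, and the appeal to \cref{prop:Specht varieties I} for distinguishing $V^{\mathbf D}_{\{\lambda,+\}}$ from $V^{\mathbf D}_{\{\lambda,-\}}$ --- matches the paper. Two points need correction.

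\textbf{The signed cover in (A)$\Rightarrow$(B).} Your case (iii), where $\Theta=\{\lambda,\pm\}$ is covered by a non-signed $\Lambda$, is not ``core'': since $I^{\mathbf D}_{\{\lambda,\pm\}}\subseteq I^{\mathbf B}_{(\lambda,\lambda)}$ trivially and $(\lambda,\lambda)\unlhd_B(\theta,\omega)$ or $(\lambda,\lambda)\unlhd_B(\omega,\theta)$, \cref{thm:SnBnEquivalence} finishes immediately. Only case (ii), $\Lambda=\{\lambda,\pm\}$ covering a non-signed $\Theta=\{\theta,\omega\}$, needs a new idea, and here your sketch has a genuine gap. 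The product identity you propose, $\bigl(\prod_T X_i+\prod_S X_j\bigr)\bigl(\prod_T X_i-\prod_S X_j\bigr)=\prod_T X_i^2-\prod_S X_j^2$, lands you in the product $I^{\mathbf D}_{\{\lambda,+\}}\cdot I^{\mathbf D}_{\{\lambda,-\}}$, which says nothing about either factor individually; I do not see how to extract the required inclusion from it. The paper's mechanism is different and specific: writing a $\mathbf D$-Specht generator as $Q_\pm=Q_1\pm Q_2$ (the two halves coming from $\prod_{i\in B_1}X_i$ and $\prod_{i\in A\cup B_2}X_i$), one multiplies by $X_1$ (the variable in the moved box) and antisymmetrizes over the symmetric group $S_{A\cup B_1}$ of the receiving column. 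A pigeonhole argument on exponents shows $\sum_\sigma\operatorname{sgn}(\sigma)\,\sigma(Q_1X_1)=0$, while $\sum_\sigma\operatorname{sgn}(\sigma)\,\sigma(Q_2X_1)=b!\cdot P$ with $P$ a $\mathbf B$-Specht polynomial of shape $(\omega,\theta)$; the nonzero constant $b!$ is where characteristic zero enters. The asymmetric vanishing of the $Q_1$-half is the crux, and it is invisible in a generic ``Garnir-type straightening'' heuristic.

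\textbf{The sign-twist in (C)$\Rightarrow$(A).} Your argument invokes ``the $B_n$-invariance of $V^{\mathbf D}_\Lambda$'', but this fails precisely when $\Lambda=\{\lambda,\pm\}$: by \cref{prop:Specht varieties I} a single sign flip can move a point between $V^{\mathbf D}_{\{\lambda,+\}}$ and $V^{\mathbf D}_{\{\lambda,-\}}$. The fix is to work inside the $B_n$-invariant subset $V^{\mathbf B}_{(\lambda,\lambda)}\subseteq V^{\mathbf D}_{\{\lambda,\pm\}}$ instead (which the paper does implicitly). More broadly, the paper's execution of (C)$\Rightarrow$(A) is not by abstract orbit-type reasoning but via three explicit integer-coordinate test points $P,Q,R\notin V^{\mathbf D}_\Theta$: the hypothesis forces $P,Q,R\notin V^{\mathbf D}_\Lambda$, and the existence of a bitableau of shape $(\lambda,\mu)$ or $(\mu,\lambda)$ at which the relevant Specht polynomial does not vanish translates directly into the partial-sum inequalities defining $\unlhd_D$.
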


We divide the proof of \cref{thm:main} into two parts. 
Our strategy, as already used in \cite{debus_moustrou_riener_verdure, moustrou2021symmetric} is to show that (A) implies (B) (\cref{lem:cover implies ideal inclusion}) and that (C) implies (A) (\cref{lem:(C) implies (A)}). Since naturally (B) implies (C) this is already sufficient.
Observe that radicality of the ideals $I_\Theta^{\mathbf{D}}$ would immediately imply the equivalence of (B) and (C). 

\begin{remark}
Although, the implication (B) $\Rightarrow$ (C) and, as we will see in \cref{rem:finite characteristic}, the implication (A) $\Rightarrow$ (B) do also hold if $\operatorname{char}(\KK) > 0$, the implication (C) $\Rightarrow$ (A) might fail.
For instance, for $n=2$ and $\KK = \ZZ / 2 \ZZ$ the ideals $I_{\{(1),\pm\}}^\mathbf{D} = (X_1+X_2)$ and $I_{\{(1,1),\emptyset\}}^\mathbf{D} = ((X_1+X_2)^2)$ are distinct but their varieties are equal. 
\end{remark}

\begin{lemma} \label{lem:cover implies ideal inclusion}
Let $\Lambda,\Theta \in \mathcal{D}_n$ with $\Theta \unlhd_D \Lambda$. 
Then $ I_\Theta^{{\mathbf{D}}}\subseteq I_\Lambda^{\mathbf{D}}$.
\end{lemma}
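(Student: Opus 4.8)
The plan is to reduce the statement to covering relations: since $\unlhd_D$ is a partial order (\cref{lem: D_n is poset}), it suffices to prove $I_\Theta^\mathbf{D}\subseteq I_\Lambda^\mathbf{D}$ whenever $\Theta\unlhc_D\Lambda$ is a covering relation, and then chain these inclusions along a saturated chain from $\Theta$ to $\Lambda$. There are three types of covers to handle according to \cref{def:DnPoset}: (i) $\{\theta,\omega\}\unlhc_D\{\lambda,\mu\}$ with all four partitions distinct (the ``generic'' case); (ii) $\{\lambda,\mu\}\unlhc_D\{\lambda,\pm\}$, i.e. a dipartition of mixed type is covered by one of the $\pm$-type (equivalently $\{\lambda,\mu\}\preccurlyeq_D\{\lambda,\lambda\}$ via the auxiliary poset); and (iii) $\{\theta,\omega\}\unlhc_D\{\lambda,\pm\}$ where $\{\lambda,\pm\}$ covers a mixed-type dipartition, whose precise shape is pinned down by \cref{prop:Covering lambda lambda}: $\theta=(\dots,\lambda_p-1,\dots)$ and $\omega=(\dots,\lambda_p,\lambda_{p+1}+1,\dots)$.

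For case (i), since $I_{\{\theta,\omega\}}^\mathbf{D}=I_{(\theta,\omega)}^\mathbf{B}+I_{(\omega,\theta)}^\mathbf{B}$ and likewise for $\{\lambda,\mu\}$, and since $\{\theta,\omega\}\preccurlyeq_D\{\lambda,\mu\}$ unwinds (by \cref{def:DnPoset}(1)) into statements $(\theta,\omega)\unlhd_B(\lambda,\mu)$ or $(\theta,\omega)\unlhd_B(\mu,\lambda)$, and similarly for $(\omega,\theta)$, I would invoke the $B_n$-equivalence $\unlhd_B\Leftrightarrow$ ideal inclusion from \cref{thm:SnBnEquivalence} to get $I_{(\theta,\omega)}^\mathbf{B}\subseteq I_{(\lambda,\mu)}^\mathbf{B}$ or $\subseteq I_{(\mu,\lambda)}^\mathbf{B}$, and the same for $I_{(\omega,\theta)}^\mathbf{B}$; summing the containments gives $I_{\{\theta,\omega\}}^\mathbf{D}\subseteq I_{(\lambda,\mu)}^\mathbf{B}+I_{(\mu,\lambda)}^\mathbf{B}=I_{\{\lambda,\mu\}}^\mathbf{D}$. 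For case (ii), one must show $I_{\{\lambda,\mu\}}^\mathbf{D}\subseteq I_{\{\lambda,+\}}^\mathbf{D}$ (and the $-$ version). Here the key point is that $\{\lambda,\mu\}\preccurlyeq_D\{\lambda,\lambda\}$ gives $(\lambda,\mu)\unlhd_B(\lambda,\lambda)$ and $(\mu,\lambda)\unlhd_B(\lambda,\lambda)$, so by \cref{thm:SnBnEquivalence} both $\mathbf{B}$-Specht ideals lie in $I_{(\lambda,\lambda)}^\mathbf{B}$; the remaining work is to show $I_{(\lambda,\lambda)}^\mathbf{B}\subseteq I_{\{\lambda,+\}}^\mathbf{D}$, which amounts to checking that each generator $\spe_{(T,S)}^\mathbf{B}$ of $I_{(\lambda,\lambda)}^\mathbf{B}$ lies in the $D_n$-ideal generated by the symmetrized polynomials $\spe_{(T,S)}^\mathbf{B}+\spe_{(S,T)}^\mathbf{B}$ — this should follow by multiplying $\spe_{(T,S),+}^\mathbf{D}$ by a suitable monomial in the $X_i$ (to break the symmetry of $\prod_{i\in T}X_i+\prod_{i\in S}X_i$ into the individual terms) and using that the $D_n$-orbit of these products, together with the invariant ring, recovers the needed $B_n$-generators modulo the ideal.

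For case (iii), with $\theta,\omega$ as in \cref{prop:Covering lambda lambda}, I would produce the generators $\spe_{(T',S')}^\mathbf{B}$ of shape $(\theta,\omega)$ (and $(\omega,\theta)$) as combinations of the generators $\spe_{(T,S),\pm}^\mathbf{D}$ of $I_{\{\lambda,\pm\}}^\mathbf{D}$ times polynomials: geometrically, moving the last box of row $p$ from one diagram of $\lambda$ to row $p+1$ of the other corresponds to a Garnir-type relation on one side combined with an extra Vandermonde factor on the other, implemented at the level of $\underline{X}^2$ — this should parallel the covering argument in the $B_n$ case of \cite{debus_moustrou_riener_verdure}, carried over through the squaring substitution, with the sign-twisted factor $\prod_{i\in T}X_i\pm\prod_{i\in S}X_i$ carried along. \textbf{The main obstacle} I anticipate is precisely case (ii)/(iii): unlike in types $A$ and $B$, the $\mathbf{D}$-Specht polynomials do not divide the relevant module elements (cf. \cref{example:dividing specht polynomials}), so one cannot argue by divisibility, and one must instead argue more carefully at the level of ideals — explicitly exhibiting each needed $\mathbf{B}$-Specht polynomial of the covered shape as a polynomial combination of the $\mathbf{D}$-generators, being attentive to the $\pm$ sign structure and to the fact that a single monomial multiple of $\spe_{(T,S),+}^\mathbf{D}$ splits its two terms only up to elements that must themselves be shown to lie in the ideal. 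I would organize this splitting step as a separate lemma.
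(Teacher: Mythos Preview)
Your reduction to covers and your handling of case (i) match the paper. However, your cases (ii) and (iii) are the \emph{same} case (a $\pm$-type dipartition covering a mixed one), and your approach in (ii) contains a genuine error: the inclusion $I_{(\lambda,\lambda)}^\mathbf{B}\subseteq I_{\{\lambda,+\}}^\mathbf{D}$ that you propose to establish is \emph{false}. Take any $x\in O(\lambda,-)$: every generator $\spe_{(T,S),+}^\mathbf{D}$ vanishes at $x$ (either some column has two entries with equal square, or $\prod_{i\in T}x_i=-\prod_{i\in S}x_i$), so $x\in V_{\{\lambda,+\}}^\mathbf{D}$; but for a filling $(T,S)$ with pairwise distinct squared column entries one has $\spe_{(T,S)}^\mathbf{B}(x)\neq 0$, so $x\notin V_{(\lambda,\lambda)}^\mathbf{B}$. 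Hence you cannot factor the argument through $I_{(\lambda,\lambda)}^\mathbf{B}$, and your suggested ``splitting step'' (recovering $\spe_{(T,S)}^\mathbf{B}$ from the symmetrized generators by multiplying by a monomial) cannot work in that form.

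The paper bypasses this by producing the $\mathbf{B}$-Specht polynomial of the \emph{covered} shape $(\omega,\theta)$ directly, never passing through shape $(\lambda,\lambda)$. With $A=\{1\}$ the moved box, $B_1$ the receiving column, $B_2$ the remainder of the donating column, one writes the relevant column constituents of $\spe_{(T,S),\pm}^\mathbf{D}$ as $Q_\pm=Q_1\pm Q_2$ (where $Q_2$ carries $\prod_{i\in A\cup B_2}X_i$ and $Q_1$ carries $\prod_{i\in B_1}X_i$) and antisymmetrizes $Q_\pm\cdot X_1$ over $S_{A\cup B_1}$. The two key identities are
\[
\sum_{\sigma\in S_{A\cup B_1}}\sgn(\sigma)\,\sigma(Q_2\cdot X_1)=b!\cdot P,
\qquad
\sum_{\sigma\in S_{A\cup B_1}}\sgn(\sigma)\,\sigma(Q_1\cdot X_1)=0,
\]
with $P$ the column constituent of the target $\spe_{(T',S')}^\mathbf{B}$. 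The second identity is the heart of the matter: it is what lets one extract an ``unsymmetrized'' $\mathbf{B}$-polynomial from the $\pm$-symmetrized $\mathbf{D}$-generator, and it is proved by a pigeonhole argument showing that every monomial of $Q_1\cdot X_1$ has two equal exponents among the variables indexed by $A\cup B_1$, so the antisymmetrization annihilates it. Your sketch in (iii) gestures at Garnir relations but does not supply this vanishing, which is not a routine consequence of the $B_n$ covering arguments. (You also omit the easy case $\Theta=\{\theta,\pm\}$ with $\Lambda$ mixed: there $I_\Theta^\mathbf{D}\subseteq I_{(\theta,\theta)}^\mathbf{B}$ trivially, and one finishes via \cref{thm:SnBnEquivalence}.)
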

\begin{proof}
\begin{itemize}
    \item We first suppose that $\Lambda = \{\lambda,\mu\}$ for distinct partitions $\lambda,\mu$. 
If $\Theta = \{\theta,\omega\}$ for partitions $\theta\neq\omega$ we have, by the definition of $\unlhd_D$ that $(\theta,\omega) \unlhd_B (\lambda,\mu)$ or $(\theta,\omega) \unlhd_B (\mu,\lambda)$, and the same holds for $(\omega,\theta)$.
From the equivalence of the posets for $\mathbf{B}$-Specht ideals (\cref{thm:SnBnEquivalence}) we deduce
$ I_\Theta^{\mathbf{D}} = I_{(\theta,\omega)}^{\mathbf{B}}+I_{(\omega,\theta)}^{\mathbf{B}} \subseteq I_{(\lambda,\mu)}^{\mathbf{B}}+I_{(\mu,\lambda)}^{\mathbf{B}} = I_\Lambda^{\mathbf{D}} $
which was to show. 
On the other hand, if $\Theta = \{\theta,\pm\}$ we have $I^{\mathbf{D}}_\Theta \subseteq I_{(\theta,\theta)}^{\mathbf{B}} \subseteq I_{(\lambda,\mu)}^{\mathbf{B}}+I_{(\mu,\lambda)}^{\mathbf{B}} = I_\Lambda^{\mathbf{D}}$. 

\item Second, we suppose that $\Lambda = \{\lambda,\pm\}$. Without loss of generality, let $\Lambda \unrhc_D \Theta$ a cover, since otherwise there is a dipartition $\setn{\mu,\nu}$ with $\mu \neq \nu$ and $\Lambda \unrhc_D \{\mu,\nu\} \unrhd_D \Theta$ by \cref{prop:Covering lambda lambda}.
As argued in the previous paragraph we then have $I_{\{\mu,\nu\}}^{\mathbf{D}} \supseteq I_\Theta^{\mathbf{D}}$. \\
Suppose that $\Lambda = \{\lambda,\pm\}$ covers $\Theta = \{\theta,\omega\}$. 
Then by \cref{prop:Covering lambda lambda}, without loss of generality, $(\omega,\theta)$ can be obtained from $(\lambda,\lambda)$ by moving one box from the right hand side diagram to the left diagram in a row below (see \cref{fig:visualtion in proof of moving boxes} for a visualization). 
In particular, $\Theta $ is not of the form $\{\theta,\pm\}$.
It is sufficient to prove that for both $(\omega,\theta)$ and $(\theta,\omega)$ there is a $\mathbf{B}$-Specht polynomial of the respective shape which is contained in $I_{\{\lambda,\pm\}}^{\mathbf{D}}$. This is because every other $\mathbf{B}$-Specht polynomial of this shape is obtained by permuting the variables and is thus also contained in $I_{\{\lambda,\pm\}}^{\mathbf{D}}$, as the ideal is closed under the action of $S_n \subseteq D_n$. 
Furthermore, since $(\omega,\theta) \unrhd_B (\theta,\omega)$ we have $I_{(\omega,\theta)}^{\mathbf{B}} \supseteq I_{(\theta,\omega)}^{\mathbf{B}}$ by \cref{thm:SnBnEquivalence}. 
So we only need to show that one $\mathbf{B}$-Specht polynomial of shape $(\omega,\theta)$ lies in the ideal $I_{\{\lambda,\pm\}}^{\mathbf{D}}$. \\
We follow the construction of $(\omega,\theta)$ from $(\lambda,\lambda)$ by moving one box.
Let $(T,S)$ be a bitableau of shape $(\lambda,\lambda)$ and let $k$ be the integer in the box that is at another position in the bidiagram $(\omega,\theta)$. Let $(T',S')$ be the bitableau of shape $(\omega,\theta)$ where any integer $k \neq i \in [n]$ is in the same box as in $(T,S)$.
We focus on the changed columns.  
Let $B_1$ be the sequence of integers in the column of $T$ that we enlarge, $A$ be the singleton of the integer of the box that we move and $A \cup B_2$ be the integers in the column of $S$ which originally contains the box of the integer in $A$.
We use set notation for $A,B_1$ and $B_2$ and suppose, without loss of generality, that their elements are ordered increasingly. 
This is because the order of the elements only affects signs of the corresponding Specht polynomials and consequently does not affect ideal inclusion.\\ 
Without loss of generality we can suppose $A=\{1\}, B_1 = \{2,3,\ldots,b+1\},B_2 = \{b+2,\ldots,2b\}$. We have $|A|=1, |B_1|=b, |B_2|=b-1$ and we consider the following constituents of Specht polynomials 
\begin{align*}
P(\underline{X}) = &\Delta_{A \cup B_1}(\underline{X}^2)\Delta_{B_2}(\underline{X}^2)\prod_{i \in B_2}X_i, \\
Q_1(\underline{X}) =& \Delta_{B_1}(\underline{X}^2)\Delta_{A \cup B_2}(\underline{X}^2) \prod_{i \in B_1} X_i, \\
Q_2(\underline{X}) = &\Delta_{B_1}(\underline{X}^2)\Delta_{A \cup B_2}(\underline{X}^2) \prod_{i \in A \cup B_2} X_i, \\
Q_\pm(\underline{X}) = & Q_1(\underline{X}) \pm Q_2(\underline{X}),
\end{align*}
and we will show that the ideal generated by the $S_{2b}$-orbit of $Q_\pm$ contains $P$ in the smaller polynomial ring $\KK[X_1,\ldots,X_{2b}]$.
Note that $\deg(P)=\deg(Q)+1$. 
We prove the identity 
\begin{align} \label{eq:C}
  \sum_{\sigma \in S_{A \cup B_1}}\operatorname{sgn}(\sigma)\sigma Q_{2}(\underline{X}) X_1 = C \cdot P(\underline{X})   
\end{align}
in the \hyperref[appendix]{Appendix} in \cref{prop:Ccalculataion} and also calculate $C=b!$.
Here we only use $C \neq 0$.
We now show that
\begin{align} \label{eq:3}
\sum_{\sigma \in S_{A \cup B_1}}\operatorname{sgn}(\sigma)\sigma Q_{1}(\underline{X})  X_1 = 0    
\end{align}
which implies that $P$ lies in the linear span of the $S_{b+1}$-orbit of $Q_\pm(\underline{X})  X_1$, because we have $\sum_{\sigma \in S_{A \cup B_1}}\operatorname{sgn}(\sigma)\sigma Q_{\pm}(\underline{X})  X_1 = C \cdot P(\underline{X})  $. 
Observe that $\spe_{(T',S')}^{\mathbf{B}} = P\cdot F$ for a polynomial $F \in \KK[X_{2b+1},\ldots,X_n]$, and 
$\spe_{(T,S)}^{\mathbf{D}} = \spe_{(T,S)}^{\mathbf{B}}\pm \spe_{(S,T)}^{\mathbf{B}} = Q_2 \cdot F \pm Q_1 \cdot G  $ for some $G\in \KK[X_{2b+1},\ldots,X_n]$. Then we have \begin{align*}
    \sum_{\sigma \in S_{A \cup B_1}}\operatorname{sgn}(\sigma)\sigma (\spe^{\mathbf{D}}_{(T,S)}(\underline{X})  X_1) = C\cdot \spe^{\mathbf{B}}_{(T',S')}(\underline{X}) +0 = C \cdot \spe^{\mathbf{B}}_{(T',S')}(\underline{X}) . 
\end{align*}
It remains to prove (\ref{eq:3}).
We have $Q_{1}(\underline{X}) X_1 = \Delta_{B_1}(\underline{X}^2)\Delta_{A\cup B_2}(\underline{X}^2)\prod_{i\in A\cup B_1}X_i$. 
We write $Q_{1}(\underline{X}) X_1$ as $\Delta_{B_1}(\underline{X}^2)\prod_{j \in B_2}(X_1^2-X_j^2)R(\underline{X})\prod_{i \in A \cup B_1}X_i$, where $R(\underline{X})$ is a polynomial in variables $X_j$, for $j \in B_2$. 
So when we act with $S_{A\cup B_1}$ on $Q_{1}(\underline{X}) X_1$, we can disregard the factor $R(\underline{X})\cdot \prod_{i\in A \cup B_1}X_i$ since it is invariant. 
Let us consider $\Delta_{B_1}(\underline{X}^2)\prod_{j \in B_2}(X_1^2-X_j^2)$ but we replace $\underline{X}^2$ by $\underline{X}$ for simplicity.
We have \[ \qquad \quad \prod_{j \in B_2}(X_1-X_j)=X_1^{b-1}-X_1^{b-2}e_1(X_{b+2},\ldots,X_{2b+1})\pm \ldots + (-1)^{b-1}e_{b-1}(X_{b+2},\ldots,X_{2b+1}) \]
where $e_j = \sum_{J \subseteq [b-1]}\prod_{i \in J}X_i$ denotes the $j$-th elementary symmetric polynomial in $b-1$ variables.
Again, the elementary symmetrics in variables from $B_2$ are invariant with respect to $S_{A\cup B_1}$. 
Now, we have that every monomial in  $\Delta_{B_1}(\underline{X})$ is of the form $X_2^{\alpha_2}\cdots X_{b+1}^{\alpha_{b+1}}$ of degree $\frac{(b-1)b}{2}$, and $\alpha_i < b$ for any $2 \leq i \leq b+1$. 
Thus, if we consider a monomial $q$ in 
$X_2^{\alpha_2}\cdots X_{b+1}^{\alpha_{b+1}}(X_1^{b-1}-X_1^{b-2}e_1(X_{b+2},\ldots,X_{2b+1})\pm \ldots + (-1)^{b-1}e_{b-1}(X_{b+2},\ldots,X_{2b+1}))$, then by the pigeonhole principle, there must exist $1 \leq i < j \leq b+1$ with $\alpha_i = \alpha_j$, where $\alpha_1$ is the exponent of $X_1$ in $q$:
The only way that $\alpha_2,\ldots,\alpha_{b+1}$ are pairwise distinct is if $[b-1]\cup\{0\} = \{\alpha_i\, \, : \, \, i\in\{2,\ldots,b+1\}\}$.
But then together with $\alpha_1$, we must have at least two equal exponents $\alpha_i$ and $\alpha_j$. 
We have $$\sum_{\sigma \in S_{A\cup B_1}}\operatorname{sgn}(\sigma)\sigma q = \sum_{\sigma \in S_{A\cup B_1}}\operatorname{sgn}(\sigma)\sigma \cdot (i,j) q = - \sum_{\sigma \in S_{A\cup B_1}}\operatorname{sgn}(\sigma)\sigma q$$
which shows (\ref{eq:3}).
\end{itemize}
\end{proof}

\begin{figure}[h!]
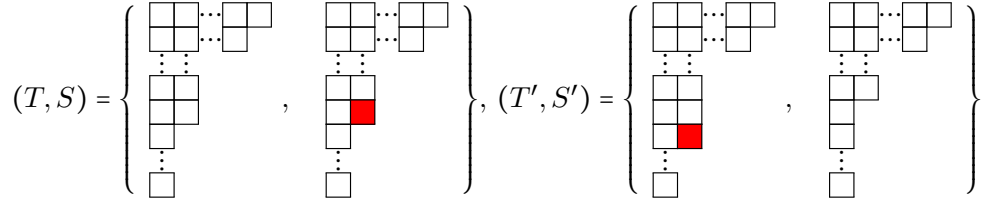

    \centering
    $(T,S) = \left\{
\ytableausetup{boxsize=.8em,centertableaux}
\begin{array}{cc}
\begin{ytableau}
        \empty & \empty & \none[\cdots] & \empty & \empty \\
         \empty & \empty & \none[\cdots]& \empty  & \none \\ 
         \none[\vdots] & \none[\vdots] & \none & \none \\
         \empty & \empty & \none & \none  \\
        \empty & \empty & \none & \none  \\
        \empty & \none \\
        \none[\vdots] & \none \\
        \empty & \none
\end{ytableau}
~,~&
\begin{ytableau}
       \empty & \empty & \none[\cdots] & \empty & \empty \\
         \empty & \empty & \none[\cdots]& \empty  & \none \\ 
         \none[\vdots] & \none[\vdots] & \none & \none \\
         \empty & \empty & \none & \none  \\
        \empty & *(red)  & \none & \none  \\
        \empty & \none \\
        \none[\vdots] & \none \\
        \empty & \none
\end{ytableau}
\end{array} 
\right\}$, $(T',S') = \left\{
\ytableausetup{boxsize=.8em,centertableaux}
\begin{array}{cc}
\begin{ytableau}
        \empty & \empty & \none[\cdots] & \empty & \empty \\
         \empty & \empty & \none[\cdots]& \empty  & \none \\ 
         \none[\vdots] & \none[\vdots] & \none & \none \\
         \empty & \empty & \none & \none  \\
        \empty & \empty & \none & \none  \\
        \empty & *(red) \\
        \none[\vdots] & \none \\
        \empty & \none
\end{ytableau}
~,~&
\begin{ytableau}
       \empty & \empty & \none[\cdots] & \empty & \empty \\
         \empty & \empty & \none[\cdots]& \empty  & \none \\ 
         \none[\vdots] & \none[\vdots] & \none & \none \\
         \empty & \empty & \none & \none  \\
        \empty & \none  & \none & \none  \\
        \empty & \none \\
        \none[\vdots] & \none \\
        \empty & \none
\end{ytableau}
\end{array} 
\right\}$ 
    \caption{$(T,S)$ is of shape $(\lambda,\lambda)$ and $(T',S')$ of shape $(\omega,\theta)$. The red box in the second column of $S$ is moved to the end of the second column of $T$ to obtain $(T',S')$}
    \label{fig:visualtion in proof of moving boxes}
\end{figure}

\begin{remark}\label{rem:finite characteristic}
Our proof of the computation of $C=b!$ in \cref{prop:Ccalculataion} reveals that for every $\sigma \in S_{B_1}$ we have
$\sgn(\sigma)\sigma Q_\pm(\underline{X}) X_1 = Q_\pm(\underline{X}) X_1$ (compare with \cref{eq:C} and \cref{eq:3}).
If we sum over a representative of each left coset in $ S_{A \cup B_1} / S_{B_1}$ instead, we find 
\begin{align}\label{eq:modified sum}
  \sum_{\sigma \in S_{A \cup B_1}/S_{B_1}} \sgn (\sigma) \sigma Q_\pm(\underline{X}) X_1 = P(\underline{X}).  
\end{align}
This does not depend on the explicit choice of the representative of a left coset, since for $\sigma = g_1h_1=g_2h_2$ with $g_1,g_2 \in S_{A \cup B_1}$ and $h_1,h_2 \in S_{B_1}$ we have
\begin{align*} 
 \sgn (g_1)g_1Q_\pm(\underline{X}) X_1  & = \sgn(g_1)g_1 \sgn(h_1)h_1Q_\pm(\underline{X}) X_1 = \sgn (g_1h_1)g_1h_1 Q_\pm(\underline{X}) X_1  \\
 & = \sgn (g_2)g_2 Q_\pm(\underline{X}) X_1. 
\end{align*}
Thus, the modified sum in \cref{eq:modified sum} is well defined. \\
In particular, a slight modification of the proof above also works in finite characteristic. 
This is, since the inclusion in every other case of covering dipartitions works also in finite characteristic (see \cite[Appendix A]{moustrou2021symmetric} for $S_n$ and \cite[Proposition~5.2]{debus_moustrou_riener_verdure} for $B_n$ although this is not explicitly mentioned there).
\end{remark}

We aim to describe $\mathbf{D}$-Specht varieties in terms of $\mathbf{B}$-orbit sets together with certain additional points for which we must introduce a formal sign.
This should generalize the definition of the sign of $x=(x_1,\ldots,x_n) \in (\RR^*)^{2m}$ as the sign of $\prod_{i=1}^{2m} x_i$.
\begin{definition}\label{def:sign}
Let $n$ be even and $x = (x_1,\ldots,x_n) \in (\KK^*)^n$ be such that $x^2$ has $\mathbf{S}$-orbit type $\lambda \uplus \lambda=(2\lambda_1,\ldots,2\lambda_l)$.  
There is $(a_1,\ldots,a_l) \in (\KK^*)^l$ with $a_i^2 \neq a_j^2$ for all $i \neq j$, and up to permutation
\[x = ((-1)^{\sigma_{1,1}}a_1,\ldots,(-1)^{\sigma_{1,2\lambda_1}}a_1,(-1)^{\sigma_{2,1}}a_2,\ldots,(-1)^{\sigma_{2,2\lambda_2}}a_2,\ldots,(-1)^{\sigma_{l,2\lambda_l}}a_l),\]
where $\sigma_{i,j} \in \{0,1\}$.
We define the \emph{formal sign} of $x$ as $\operatorname{sign} (x):= \prod_{i=1}^l \prod_{j=1}^{2\lambda_i} (-1)^{\sigma_{i,j}} \in \{\pm 1\}$
and we define the \emph{$\mathbf{D}$-orbit sets} \[O(\lambda,\pm) := \{x\in (\KK^*)^n \cap O(\lambda \uplus \lambda) \, \, : \, \,  \operatorname{sign}(x)=\pm1\}. \]
\end{definition}
Observe that $a=(a_1,\ldots,a_l)$ is not unique, but the formal sign of $x$ does not depend on the choice of $a$.
This is due to the orbit type $\lambda \uplus \lambda$ of $x$ where each $\pm a_i$ occurs with even multiplicity in $x$.
For instance, $(-1,-1,-1,-1,1,1) \in O(\lambda,+)$ and $(-1,-1,-1,1,1,1) \in O(\lambda,-)$ no matter whether $a_1$ is chosen to be  $1$ or $-1$.

\begin{proposition} \label{prop:Specht varieties I}
The $\mathbf{D}$-Specht varieties corresponding to a dipartition $\Lambda$ can be characterized as follows:
\begin{enumerate}
    \item For $\Lambda = \{\lambda , \mu\}$ with $\lambda \neq \mu$ we have 
\begin{align*}
  V_{\{\lambda,\mu\}}^{\mathbf{D}} \quad  = \quad \bigcup_{\substack{(\omega,\theta) \not \unlhd_B (\lambda,\mu) \text{ and} \\ (\omega,\theta) \not \unlhd_B (\mu,\lambda)}} O{(\omega,\theta)} \quad = \quad \left(\bigcup_{\substack{(\omega,\theta) \unlhd_B (\lambda,\mu) \text{ or } \\ (\omega,\theta)  \unlhd_B (\mu,\lambda) }}O{(\omega,\theta)} \right)^c  . 
  \end{align*}
  \item For $\Lambda = \{\lambda,\pm\}$ we have
\begin{enumerate}
    \item[(i)] $V_{\{\lambda,+\}}^{\mathbf{D}} = V_{(\lambda,\lambda)}^{\mathbf{B}} \cup O(\lambda,-),$
    \item[(ii)] $V_{\{\lambda,-\}}^{\mathbf{D}} = V_{(\lambda,\lambda)}^\mathbf{B} \cup O(\lambda,+).$
\end{enumerate}
\end{enumerate}
\end{proposition}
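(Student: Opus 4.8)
For part (1), I would use that $I_{\{\lambda,\mu\}}^{\mathbf D}=I_{(\lambda,\mu)}^{\mathbf B}+I_{(\mu,\lambda)}^{\mathbf B}$ by definition, so $V_{\{\lambda,\mu\}}^{\mathbf D}=V_{(\lambda,\mu)}^{\mathbf B}\cap V_{(\mu,\lambda)}^{\mathbf B}$; expressing each factor as a union of $\mathbf B$-orbit sets via \cref{thm:SnBnEquivalence} and using that the $\mathbf B$-orbit sets partition $\KK^n$ (\cref{def:orbit types}), the intersection of the two unions is the union over the common index set, giving the first displayed equality, and the complementary description follows from the partition property. For part (2) it suffices to prove (i): (ii) follows by interchanging $+$ and $-$, which corresponds to replacing $\operatorname{sign}=-1$ by $\operatorname{sign}=+1$. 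I would set up two facts. (a) Since $\operatorname{char}\KK=0$, the identity $2\spe_{(T,S)}^{\mathbf B}=\spe_{\{(T,S),+\}}^{\mathbf D}+\spe_{\{(T,S),-\}}^{\mathbf D}$ shows $I_{\{\lambda,+\}}^{\mathbf D}+I_{\{\lambda,-\}}^{\mathbf D}=I_{(\lambda,\lambda)}^{\mathbf B}$, hence $V_{\{\lambda,+\}}^{\mathbf D}\cap V_{\{\lambda,-\}}^{\mathbf D}=V_{(\lambda,\lambda)}^{\mathbf B}$ and $V_{(\lambda,\lambda)}^{\mathbf B}\subseteq V_{\{\lambda,\pm\}}^{\mathbf D}$. (b) Call a bitableau $(T,S)$ of shape $(\lambda,\lambda)$ \emph{$x$-reduced} if $\spe_T^{\mathbf S}(x^2)\spe_S^{\mathbf S}(x^2)\neq 0$; by the Vandermonde form of $\spe_{\bullet}^{\mathbf S}$ this means every column of $T$ and of $S$ carries pairwise distinct $x^2$-coordinates, there is an $x$-reduced bitableau whenever $x\notin V_{(\lambda,\lambda)}^{\mathbf B}$ (take the factors of a nonvanishing $\mathbf B$-Specht polynomial), and an $x$-reduced bitableau satisfies $\operatorname{sort}(m^T)\unlhd_S\lambda$ and $\operatorname{sort}(m^S)\unlhd_S\lambda$, where $m^T$ records, for each value $c$ of $x^2$, the multiplicity $\#\{i\in T:x_i^2=c\}$ (apply \cref{thm:SnBnEquivalence} to the variables of $T$ and of $S$). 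Comparing partial sums of $m^T+m^S$ against those of $2\lambda=(2\lambda_1,\dots,2\lambda_l)$ then forces the $\mathbf S$-orbit type $\kappa$ of $x^2$ to satisfy $\kappa\unlhd_S 2\lambda$, with equality possible only if $m^T=m^S=\lambda$, i.e.\ every $x^2$-value is split evenly between the two diagrams.

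For the inclusion $\supseteq$ in (i), the part $V_{(\lambda,\lambda)}^{\mathbf B}\subseteq V_{\{\lambda,+\}}^{\mathbf D}$ is (a). For $O(\lambda,-)\subseteq V_{\{\lambda,+\}}^{\mathbf D}$, I would take $x\in O(\lambda,-)$ in the normal form of \cref{def:sign} with $\operatorname{sign}(x)=\prod_{k,j}(-1)^{\sigma_{k,j}}=-1$; on a bitableau that is not $x$-reduced the generator $\spe_{\{(T,S),+\}}^{\mathbf D}(x)$ vanishes through its Vandermonde factors, while on an $x$-reduced $(T,S)$ we have $m^T=m^S=\lambda$ by (b) (as $\kappa=2\lambda$ here), so $\prod_{i\in T}x_i=(\prod_k a_k^{\lambda_k})(-1)^{t_T}$ and $\prod_{j\in S}x_j=(\prod_k a_k^{\lambda_k})(-1)^{t_S}$ where $t_T,t_S$ count the negatively-signed entries placed in $T$, $S$ and $t_T+t_S=\sum_{k,j}\sigma_{k,j}$ is odd; hence $\prod_{i\in T}x_i+\prod_{j\in S}x_j=0$ and $\spe_{\{(T,S),+\}}^{\mathbf D}(x)=0$.

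For $\subseteq$ in (i), let $x\in V_{\{\lambda,+\}}^{\mathbf D}\setminus V_{(\lambda,\lambda)}^{\mathbf B}$. By (a), $x\notin V_{\{\lambda,-\}}^{\mathbf D}$, so some $x$-reduced $(T_1,S_1)$ has $\prod_{i\in T_1}x_i-\prod_{j\in S_1}x_j\neq 0$; together with $\prod_{i\in T_1}x_i+\prod_{j\in S_1}x_j=0$ (from $x\in V_{\{\lambda,+\}}^{\mathbf D}$) this forces $\prod_{i\in T_1}x_i\neq 0$, so $x\in(\KK^*)^n$, and $\kappa\unlhd_S 2\lambda$ as above. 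The crux is to strengthen this to $\kappa=2\lambda$. Assuming $\kappa\neq 2\lambda$, I would produce an $x$-reduced bitableau $(T,S)$ and entries $a\in T$, $b\in S$ with $x_a^2\neq x_b^2$ such that the bitableau $(T',S')$ obtained by swapping $a$ and $b$ is again $x$-reduced; then $x\in V_{\{\lambda,+\}}^{\mathbf D}$ applied to $(T,S)$ and to $(T',S')$ gives $\prod_{i\in T}x_i=-\prod_{j\in S}x_j$ and $\prod_{i\in T'}x_i=-\prod_{j\in S'}x_j$, yet
\[
\prod_{i\in T'}x_i+\prod_{j\in S'}x_j=\Bigl(\prod_{i\in T}x_i\Bigr)\Bigl(\frac{x_b}{x_a}-\frac{x_a}{x_b}\Bigr)=\Bigl(\prod_{i\in T}x_i\Bigr)\frac{x_b^2-x_a^2}{x_ax_b}\neq 0,
\]
a contradiction, so $\kappa=2\lambda$. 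Finally, with $\kappa=2\lambda$, any $x$-reduced $(T_0,S_0)$ has $m^{T_0}=m^{S_0}=\lambda$, so $0=\prod_{i\in T_0}x_i+\prod_{j\in S_0}x_j=(\prod_k a_k^{\lambda_k})\bigl((-1)^{t_{T_0}}+(-1)^{t_{S_0}}\bigr)$ forces $t_{T_0}+t_{S_0}=\sum_{k,j}\sigma_{k,j}$ odd, i.e.\ $\operatorname{sign}(x)=-1$, whence $x\in O(\lambda,-)$.

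The main obstacle is the swap claim: showing that if the orbit type $\kappa$ of $x^2$ is dominated by $2\lambda$ but not equal to it, then the multiset of $x^2$-coordinates admits two distinct ``admissible'' distributions between the two diagrams of shape $\lambda$ — admissible meaning all columns stay repetition-free — that differ by moving a single coordinate of one $x^2$-value to a slot of another. This is a purely combinatorial statement about pairs of nonnegative integer vectors $u,v$ with $u+v=\kappa$ and $\operatorname{sort}(u),\operatorname{sort}(v)\unlhd_S\lambda$, which I expect to prove by exploiting the slack in $\kappa\unlhd_S 2\lambda$: one performs a covering step toward $\lambda$ in the dominance order on $\operatorname{sort}(u)$ and reads it back as such a swap. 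All remaining steps are routine consequences of \cref{thm:SnBnEquivalence}, the factorisation of $\mathbf D$-Specht polynomials, and \cref{def:sign}.
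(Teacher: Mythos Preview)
Your treatment of part~(1) and of the inclusion $\supseteq$ in part~(2) coincides with the paper's argument. The difference lies in the reverse inclusion of~(2)(i), specifically in how you force the $\mathbf S$-orbit type $\kappa$ of $x^2$ to equal $\lambda\uplus\lambda$.

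The paper does \emph{not} argue this combinatorially from a single point $x$. Instead it invokes \cref{lem:cover implies ideal inclusion} (the already-established implication $(A)\Rightarrow(B)$): for every $\{\theta,\omega\}\unlhd_D\{\lambda,+\}$ one has $I_{\{\theta,\omega\}}^{\mathbf D}\subseteq I_{\{\lambda,+\}}^{\mathbf D}$, hence $x\in V_{\{\theta,\omega\}}^{\mathbf D}\subseteq V_{(\theta,\omega)}^{\mathbf B}$, giving $\kappa\not\unlhd_S\theta\uplus\omega$. Combined with \cref{cor:dominance and didomiance}, which says every partition covered by $\lambda\uplus\lambda$ in dominance arises as such a fusion $\theta\uplus\omega$, this rules out $\kappa\lhd_S\lambda\uplus\lambda$ and forces $\kappa=\lambda\uplus\lambda$. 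So the paper leverages the poset machinery already built, at the cost of making \cref{prop:Specht varieties I} logically dependent on \cref{lem:cover implies ideal inclusion}.

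Your route via the ``swap claim'' is genuinely different and, if completed, would make the proposition self-contained. The idea is sound and the contradiction $\bigl(\prod_{i\in T}x_i\bigr)(x_b^2-x_a^2)/(x_ax_b)\neq 0$ is correct once the swap exists. However, you explicitly leave the swap claim unproven, and it is not entirely routine: you must show that whenever $\kappa\lhd_S 2\lambda$ strictly and $(T,S)$ is $x$-reduced, some pair $a\in T$, $b\in S$ with $x_a^2\neq x_b^2$ can be exchanged without creating a column collision in either tableau. This requires controlling simultaneously the column of $a$ in $T$ and the column of $b$ in $S$, and the hint ``perform a covering step toward $\lambda$ in the dominance order on $\operatorname{sort}(u)$'' does not by itself guarantee that the \emph{target} column in the other tableau is free of the incoming value. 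A clean argument would likely fix a value $c$ whose total multiplicity in $x^2$ is strictly below $2\lambda_1$ (such a value exists by $\kappa\neq 2\lambda$), locate a column of $T$ containing $c$ but missing some other value $c'$ present in $S$, and then choose $b$ in a column of $S$ that already contains $c$ so that inserting $c$ causes no clash---but this case analysis is precisely what you have deferred. Until it is written out, the proposal has a real gap at its acknowledged ``main obstacle''; the paper's approach sidesteps it entirely.
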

Observe that the sets $V_{\{\lambda,+\}}^\mathbf{D}$ and $V_{\{\lambda,-\}}^\mathbf{D}$ are not comparable with respect to inclusion which aligns with the incomparability of $\{\lambda,+\}$ and $\{\lambda,-\}$ in the poset $(\mathcal{D}_n,\unlhd_D)$ in \cref{def:DnPoset}.

\begin{proof}
\begin{enumerate}
    \item This follows from the classification of $\mathbf{B}$-Specht varieties based on $\unlhd_B$ and $\mathbf{B}$-orbit sets. 
We have $V_{\{\lambda,\mu\}}^\mathbf{D} = V_{(\lambda,\mu)}^\mathbf{B} \cap V_{(\mu,\lambda)}^\mathbf{B}$ and $V_{(\lambda,\mu)}^\mathbf{B} = \bigcup_{(\omega,\theta) \not \unlhd_B (\lambda,\mu)} O{(\omega,\theta)}$ by \cref{thm:SnBnEquivalence}. 
Therefore, we have 
\begin{align*}
V_{\{\lambda,\mu\}}^\mathbf{D} & = V_{(\lambda,\mu)}^\mathbf{B} \cap V_{(\mu,\lambda)}^\mathbf{B}  \\ 
& = \bigcup_{(\omega,\theta) \not \unlhd_B (\lambda,\mu)}O{(\omega,\theta)}~ \cap \bigcup_{(\omega,\theta) \not \unlhd_B (\mu,\lambda)}O{(\omega,\theta)} =\bigcup_{\substack{(\omega,\theta) \not \unlhd_B (\lambda,\mu) \text{ and} \\ (\omega,\theta) \not \unlhd_B (\mu,\lambda)}} O{(\omega,\theta)}. 
\end{align*}
The second claimed equality follows from the fact that the $\mathbf{B}$-orbit sets are a set partition of $\KK^n$.
\item In our proof we restrict to (i). The proof of (ii) works analogously. 
First, we verify that the right hand side is a subset of the left hand side. 
We have $V_{(\lambda,\lambda)}^\mathbf{B} \subseteq V_{\{\lambda,\pm\}}^\mathbf{D} $ since for $x \in V_{(\lambda,\lambda)}^\mathbf{B}$ and a bitableau $(T,S)$ of shape $(\lambda,\lambda)$ we have 
\[\spe_{(T,S),+}^{\mathbf{D}}(x)= \spe_{(T,S)}^{\mathbf{B}}(x)+\spe_{(S,T)}^{\mathbf{B}}(x)=0+0=0.\]
Next, we consider a point $x \in O(\lambda,-)$.
For every filling $(T,S)$ of the bidiagram of shape $(\lambda,\lambda)$ either a column has two equal entries up to sign, or the fillings of both tableaux are the same (up to signs and permutation within each column).
In the first case we have $\spe_{(T,S)}^{\mathbf{B}}(x)=\spe_{(S,T)}^{\mathbf{B}}(x)=0$ and in the latter we have $\prod_{i \in T}x_i = - \prod_{i \in S}x_i$ since $\operatorname{sign}(x)=-1$. 
Thus $0 = \spe_{T}^{\mathbf{S}}(x^2)\spe_{S}^{\mathbf{S}}(x^2)(\prod_{i \in T}x_i + \prod_{i \in S}x_i) = \spe_{(T,S),+}^{\mathbf{D}}(x)$ which shows $x \in V_{\{\lambda,+\}}^{\mathbf{D}}$.\\
We now show that the left hand side is a subset of the right hand side.
Let $x \in V_{\{\lambda,+\}}^\mathbf{D}$ and suppose that $x_i = 0$ for some integer $i$. We verify that then $x \in V_{(\lambda,\lambda)}^\mathbf{B}$. Let $(T,S)$ be a bidiagram of shape $(\lambda,\lambda)$. Suppose that a box in $S$ contains $i$ with $x_i=0$, then $\spe_{(T,S)}^\mathbf{B}(x)=0$ since $\prod_{i \in S}x_i =0$. Otherwise, we also have $0=\spe_{(T,S)}^\mathbf{D} (x)=\spe_T^{\mathbf{S}}(x^2)\spe_S^{\mathbf{S}}(x^2)(\prod_{i \in T}x_i+\prod_{i \in S}x_i) =  \spe_T^{\mathbf{S}}(x^2)\spe^{\mathbf{S}}_S(x^2) \prod_{i \in S}x_i = \spe_{(T,S)}^\mathbf{B}(x)$, because $\prod_{i \in T}x_i=0$. 
Since $(T,S)$ was arbitrary, this shows $x \in V_{(\lambda,\lambda)}^\mathbf{B}$. \\
Now, let $x \in V_{\{\lambda,+\}}^{\mathbf{D}}\setminus V_{(\lambda,\lambda)}^{\mathbf{B}}$. 
Then we have $x_i \neq 0$ for all $1 \leq i \leq n$ by the previous paragraph. Let $\kappa$ denote the $S_n$-orbit type of $x^2$, i.e. $(\emptyset,\kappa)$ is the $B_n$-orbit type of $x$. 
Since $x \not\in V_{(\lambda,\lambda)}^{\mathbf{B}}$ we have $(\emptyset,\kappa) \unlhd_B (\lambda,\lambda)$ by \cref{thm:SnBnEquivalence}, and thus $\kappa \unlhd_S \lambda \uplus \lambda$.
By \cref{lem:cover implies ideal inclusion} we have $I_\Omega^{\mathbf{D}} \subseteq I_{\{\lambda,+\}}^{\mathbf{D}}$ for all $\Omega \unlhd_D \{\lambda,+\}$. 
In particular, this holds for every $\Omega=\{\omega,\theta\}$ obtained from $\{\lambda,\lambda\}$ by moving one box as in \cref{prop:Covering lambda lambda}. 
Then we have $x\in V_{\{\lambda,+\}}^{\mathbf{D}} \subseteq V_{\{\theta,\omega\}}^{\mathbf{D}} \subseteq V_{(\theta,\omega)}^{\mathbf{B}}$, which shows $(\emptyset,\kappa ) \not \unlhd_B (\theta,\omega)$ and thus $\kappa \not \unlhd_S \theta \uplus \omega$. \\
We will show that $\kappa = \lambda \uplus \lambda$. 
Recall that every partition $\rho$ covered by $\lambda \uplus \lambda$ in the poset $\mathcal{P}_n$ can be achieved as the fusion of the two partitions contained in a dipartition $\Omega \unlhd_D \{\lambda,+\}$ by \cref{cor:dominance and didomiance}. 
Assume that $\kappa \unlhc_S \lambda \uplus \lambda$. 
Then there is a partition $\rho \vdash n$ which is covered by $\lambda \uplus \lambda$ and $\kappa \unlhd_S \rho \unlhc_s \lambda \uplus \lambda$.
However, there is a dipartition $\Omega = \{\theta,\omega\} \unlhd_D \{\lambda,\pm\}$ such that $\rho = \omega \uplus \theta$ which implies $x \not \in V_{\{\theta,\omega\}}^{\mathbf{D}}$ which is a contradiction to $x \in V_{\{ \lambda,+\}}^{\mathbf{D}} \subseteq V_{\{\theta,\omega\}}^{\mathbf{D}}$.
Thus, we must have $\kappa = \lambda \uplus \lambda$.  \\
The claim about the parity on the signs of the coefficients of $x$ follows, since one can fill the bitableau of shape $(\lambda,\lambda)$ with $x^2$ such that every column has pairwise distinct entries. 
Thus the sum of both products of all entries of the tableaux must sum to $0$ in order to have $x \in V_{\{\lambda,+\}}^{\mathbf{D}}$.
\end{enumerate}
\end{proof}
Note that both $\mathbf{B}$-orbit sets $O{(\omega,\theta)}$ and $O{(\theta,\omega)}$ have to be considered in the disjoint union in \cref{prop:Specht varieties I} (1). 
 For instance, for the $\mathbf{D}$-Specht ideal $I_{\{(n-1),(1)\}}^\mathbf{D} = (x_1,\ldots,x_n)$ we have $O{((n),\emptyset)}=\{(0,\ldots,0)\} \subseteq V_{\{(n-1),(1)\}}^\mathbf{D}$ but $O{(\emptyset,(n))}=(\pm a,\ldots,\pm a) \, : \, a \neq 0\} \not \subseteq V_{\{(n-1),(1)\}}^\mathbf{D}$.
 If $O{(\vartheta,\omega)} \neq \emptyset$ and $O {(\omega,\vartheta)} \neq \emptyset$ then we have $\vartheta = \emptyset$ or $\omega = \emptyset$ by \cref{def:orbit types}. \\
Moreover, we can say the following about the set theoretic difference of $V_{(\lambda,\mu)}^\mathbf{B}$ and $V_{(\mu,\lambda)}^\mathbf{B}$.
\begin{observation}
If $x \in V_{(\lambda,\mu)}^\mathbf{B}\setminus V_{(\mu,\lambda)}^\mathbf{B}$ then a coordinate of $x$ must be zero. 
\end{observation}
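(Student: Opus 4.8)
The plan is to reduce the statement to the combinatorial description of $\mathbf{B}$-Specht varieties provided by \cref{thm:SnBnEquivalence}. Given $x \in V_{(\lambda,\mu)}^{\mathbf{B}}\setminus V_{(\mu,\lambda)}^{\mathbf{B}}$, let $(\vartheta,\omega)$ denote the $\mathbf{B}$-orbit type of $x$, which is well defined since the $\mathbf{B}$-orbit sets partition $\KK^n$. Because $V_{(\alpha,\beta)}^{\mathbf{B}} = \bigcup_{(\vartheta',\omega') \not\unlhd_B (\alpha,\beta)} O(\vartheta',\omega')$ and distinct orbit sets are disjoint, the hypothesis translates into the two conditions $(\vartheta,\omega) \not\unlhd_B (\lambda,\mu)$ and $(\vartheta,\omega) \unlhd_B (\mu,\lambda)$.

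Next I would argue by contradiction and assume that no coordinate of $x$ vanishes. By \cref{def:orbit types} a point of $\mathbf{B}$-orbit type $(\vartheta,\omega)$ has exactly $\vartheta_1$ zero coordinates, so $\vartheta_1 = 0$ and hence $\vartheta = \emptyset$. The key observation is that for a bipartition with empty first component the bidominance relation collapses to ordinary dominance of the fusion: I claim that $(\emptyset,\omega) \unlhd_B (\alpha,\beta)$ holds if and only if $\omega \unlhd_S \alpha\uplus\beta$. Indeed, the first chain of inequalities defining $\unlhd_B$ is exactly $\omega \unlhd_S \alpha\uplus\beta$, as already recorded after the definition of $\unlhd_B$; and the second chain, which for $(\emptyset,\omega)$ reads $\sum_{i=1}^{j-1}\omega_i \le \alpha_j + \sum_{i=1}^{j-1}(\alpha_i+\beta_i)$, is trivial for $j=1$ and, for $j\ge 2$, follows from the first chain at index $j-1$ together with $\alpha_j \ge 0$; thus the second chain is redundant here.

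Since fusion is symmetric, $\alpha\uplus\beta = \beta\uplus\alpha$, so the equivalence just established yields $(\emptyset,\omega)\unlhd_B(\lambda,\mu) \iff \omega \unlhd_S \lambda\uplus\mu \iff (\emptyset,\omega)\unlhd_B(\mu,\lambda)$, which contradicts the two conditions obtained in the first paragraph. Hence $x$ must have a zero coordinate. The argument is short, and the only steps requiring any care — and thus the main, if modest, obstacle — are verifying that the second chain of bidominance inequalities is automatically satisfied once the first component is empty, and that a coordinate-wise nonzero point necessarily has $\mathbf{B}$-orbit type with empty first component; both are immediate from unwinding \cref{def:orbit types} and the definition of $\unlhd_B$.
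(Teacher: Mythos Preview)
Your route via orbit types and bidominance is genuinely different from the paper's argument, which works directly from the Specht polynomials: there one picks a bitableau $(S,T)$ of shape $(\mu,\lambda)$ with $\spe_{(S,T)}^{\mathbf{B}}(x)\neq 0$, observes that this forces $x_i^2\neq x_j^2$ whenever $i,j$ share a column of $S$ or $T$, and then concludes that $\spe_{(T,S)}^{\mathbf{B}}(x)=0$ can only come from the factor $\prod_{i\in S}x_i$. Your argument instead leans on \cref{thm:SnBnEquivalence}; this is heavier machinery but gives a pleasant combinatorial explanation of the phenomenon.

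However, one step needs repair. The claimed equivalence $(\emptyset,\omega)\unlhd_B(\alpha,\beta)\iff\omega\unlhd_S\alpha\uplus\beta$ is false: take $\omega=(3,1)$ and $\alpha=\beta=(2)$; then $(\emptyset,(3,1))\unlhd_B((2),(2))$ holds (the first chain reads $3\le 4$ and $4\le 4$), yet $(3,1)\not\unlhd_S(2,2)$. The remark you invoke after the definition of $\unlhd_B$ is imprecise on this point --- the first chain is really the dominance relation between the part-wise sums $(\lambda_i+\mu_i)_i$ and $(\omega_i+\theta_i)_i$, not between the fusions. Fortunately you do not need that equivalence at all. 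Having correctly shown the second chain to be redundant for $(\emptyset,\omega)$, simply note that the first chain $\sum_{i=1}^j\omega_i\le\sum_{i=1}^j(\alpha_i+\beta_i)$ is manifestly symmetric in $\alpha$ and $\beta$; this directly gives $(\emptyset,\omega)\unlhd_B(\lambda,\mu)\iff(\emptyset,\omega)\unlhd_B(\mu,\lambda)$ and finishes the proof.
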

\begin{proof}
Let $x \in V_{(\lambda,\mu)}^\mathbf{B}\setminus V_{(\mu,\lambda)}^\mathbf{B}$. Then there is a bitableau $(S,T)$ of shape $(\mu,\lambda)$ such that $\spe_{(S,T)}^\mathbf{B}(x) \neq 0$ but $\spe_{(T,S)}^\mathbf{B}(x)=0$. 
The inequality implies that for each pair of distinct integers $i,j$ from a same column in $T$ or $S$ we have $x_i^2 \neq x_j^2$. 
In order for $\spe_{(T,S)}^\mathbf{B}(x)=0$ to hold true, $\prod_{i\in S}x_i$ has to vanish.
In particular we must have $x_i=0$ for some $i\in S$. 
\end{proof}

To prove that (C) implies (A) we use the characterization of $\mathbf{D}$-Specht varieties $V_\{\lambda,\pm\}^{\mathbf{D}}$ in \cref{prop:Specht varieties I} (2).
\begin{lemma} \label{lem:(C) implies (A)}
For $\Lambda,\Theta \in \mathcal{D}_n$ with $V_\Lambda^{\mathbf{D}} \subseteq V_\Theta^{\mathbf{D}}$ the relation $\Theta \unlhd_D \Lambda$ holds.  
\end{lemma}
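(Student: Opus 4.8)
I would prove the contrapositive: assuming $\Theta\not\unlhd_D\Lambda$, produce a point of $V_\Lambda^{\mathbf{D}}\setminus V_\Theta^{\mathbf{D}}$. The organizing tool is \cref{prop:Specht varieties I}, which writes every $\mathbf{D}$-Specht variety as a union of $\mathbf{B}$-orbit sets $O(\vartheta,\omega)$ together with some of the sign-refined sets $O(\nu,\pm)$ of \cref{def:sign}. If one refines the partition of $\KK^n$ by $\mathbf{B}$-orbit sets, splitting each $O(\emptyset,\rho)$ with $\rho$ ``even'' into the two pieces $O(\nu,+)\sqcup O(\nu,-)$, then all $\mathbf{D}$-Specht varieties become unions of pieces of this finer partition, so $V_\Lambda^{\mathbf{D}}\subseteq V_\Theta^{\mathbf{D}}$ is equivalent to a containment of index sets. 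The proof then consists of producing, for each way $\Theta\not\unlhd_D\Lambda$ can occur, a single refined piece inside $V_\Lambda^{\mathbf{D}}$ but not inside $V_\Theta^{\mathbf{D}}$. Throughout I use the $\mathbf{B}$-level equivalences of \cref{thm:SnBnEquivalence} and, for dipartitions of the form $\{\cdot,\pm\}$, the inequality $(\emptyset,\lambda\uplus\lambda)\unlhd_B(\lambda,\lambda)$ (the $\mathbf{B}$-orbit type of every point of $O(\lambda,\pm)$ sits at the bottom of the fibre over $(\lambda,\lambda)$), so that $O(\lambda,\pm)$ always meets $V_{(\lambda,\lambda)}^{\mathbf{B}}$ in the empty set.

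\textbf{The generic cases.} When $\Lambda=\{\lambda,\mu\}$ and $\Theta=\{\theta,\omega\}$ both have distinct parts, \cref{prop:Specht varieties I}(1) reduces matters to $\mathbf{B}$-orbit sets only (the two sign-pieces over an even $\rho$ always occur together). Here $\Theta\not\unlhd_D\Lambda$ means $\{\theta,\omega\}\not\preccurlyeq_D\{\lambda,\mu\}$, say $(\theta,\omega)\not\unlhd_B(\lambda,\mu)$ and $(\theta,\omega)\not\unlhd_B(\mu,\lambda)$; by \cref{thm:SnBnEquivalence} I get an $\mathbf{B}$-orbit type $\psi$ with $O(\psi)\neq\emptyset$, $\psi\unlhd_B(\theta,\omega)$, and $\psi\not\unlhd_B(\lambda,\mu)$, $\psi\not\unlhd_B(\mu,\lambda)$, whence $O(\psi)\subseteq V_\Lambda^{\mathbf{D}}$ but $O(\psi)\cap V_\Theta^{\mathbf{D}}=\emptyset$. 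The one subtle point is choosing such a $\psi$ when $(\theta,\omega)$ itself has empty $\mathbf{B}$-orbit set: then one passes first to the orbit type of the associated cut (in the sense of \cite{debus_moustrou_riener_verdure}), for which $V_{(\theta,\omega)}^{\mathbf{B}}$ is unchanged, before invoking \cref{thm:SnBnEquivalence}. If $\Lambda$ is generic and $\Theta=\{\theta,\pm\}$, then $\{\theta,\theta\}\not\preccurlyeq_D\{\lambda,\mu\}$, i.e.\ $(\theta,\theta)\not\unlhd_B(\lambda,\mu),(\mu,\lambda)$; since $V_{\{\theta,\pm\}}^{\mathbf{D}}=V_{(\theta,\theta)}^{\mathbf{B}}\cup O(\theta,\mp)$ and the piece $O(\theta,\pm)$ lies in neither $V_{(\theta,\theta)}^{\mathbf{B}}$ (its orbit type is $\unlhd_B(\theta,\theta)$) nor $O(\theta,\mp)$, I only need a refined piece of $V_\Lambda^{\mathbf{D}}$ outside $V_{(\theta,\theta)}^{\mathbf{B}}\cup O(\theta,\mp)$, which I obtain from $(\theta,\theta)\not\unlhd_B(\lambda,\mu),(\mu,\lambda)$ as in the previous case (using a $\psi\neq(\emptyset,\theta\uplus\theta)$ to avoid the set $O(\theta,\mp)$).

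\textbf{The cases with $\Lambda=\{\lambda,\pm\}$.} Here $V_{\{\lambda,+\}}^{\mathbf{D}}=V_{(\lambda,\lambda)}^{\mathbf{B}}\cup O(\lambda,-)$ and $V_{\{\lambda,-\}}^{\mathbf{D}}=V_{(\lambda,\lambda)}^{\mathbf{B}}\cup O(\lambda,+)$ by \cref{prop:Specht varieties I}(2). If $\Theta$ is generic, $\{\theta,\omega\}\not\preccurlyeq_D\{\lambda,\lambda\}$ gives, via \cref{thm:SnBnEquivalence}, an $\mathbf{B}$-orbit type $\psi\unlhd_B(\lambda,\lambda)$ with $\psi\not\unlhd_B(\theta,\omega),(\omega,\theta)$; if $\psi=(\emptyset,\lambda\uplus\lambda)$ I replace $O(\psi)$ by the refined piece $O(\lambda,-)$ (respectively $O(\lambda,+)$), which lies in $V_\Lambda^{\mathbf{D}}$, and in either case the piece witnesses non-inclusion. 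If $\Theta=\{\theta,\pm\}$ with $\theta\neq\lambda$, then $\Theta\unlhd_D\Lambda$ is equivalent to $(\theta,\theta)\unlhd_B(\lambda,\lambda)$ regardless of the signs, so this is the generic--generic argument applied to $(\lambda,\lambda)$ and $(\theta,\theta)$, again using $(\emptyset,\theta\uplus\theta)\unlhd_B(\theta,\theta)$. The genuinely new case is $\Theta=\{\lambda,\mp\}$, the opposite sign of $\Lambda$, which is incomparable to $\Lambda$ by \cref{def:DnPoset}; here non-inclusion is immediate, since the nonempty piece $O(\lambda,\mp)$ lies in $V_\Lambda^{\mathbf{D}}$ but meets neither $V_{(\lambda,\lambda)}^{\mathbf{B}}$ nor $O(\lambda,\pm)$, hence lies outside $V_\Theta^{\mathbf{D}}$.

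\textbf{Main obstacle.} The hard part is not any single estimate but the bookkeeping forced by \cref{prop:Specht varieties I}(2): keeping track of exactly which refined pieces (ordinary $\mathbf{B}$-orbit sets versus the sign-split pieces $O(\nu,\pm)$) belong to each variety, so that the cases involving $\{\lambda,\pm\}$ are faithfully translated into $\mathbf{B}$-level statements about $(\lambda,\lambda)$; a secondary technical nuisance is that bipartitions such as $(\theta,\omega)$ and $(\theta,\theta)$ need not have non-empty $\mathbf{B}$-orbit sets, so at several points one must first pass to an orbit type maximal below them (their cut) before applying \cref{thm:SnBnEquivalence}. Once these translations are in place, each case is closed by exhibiting one explicit piece, exactly as above.
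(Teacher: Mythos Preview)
Your contrapositive strategy via \cref{prop:Specht varieties I} is sound and genuinely different from the paper's argument. The paper proceeds \emph{directly}: assuming $V_\Lambda^{\mathbf{D}}\subseteq V_\Theta^{\mathbf{D}}$, it writes down three explicit test points $P,Q,R$ (built from the row sizes of $\theta,\omega$), checks by hand that they lie outside $V_\Theta^{\mathbf{D}}$, hence outside $V_\Lambda^{\mathbf{D}}$, and reads off from the existence of non-vanishing fillings of $(\lambda,\mu)$ or $(\mu,\lambda)$ precisely the row-sum inequalities defining $\Theta\unlhd_D\Lambda$. Your approach instead packages everything into the refined orbit-set partition and picks a single witnessing piece; this is cleaner conceptually but leans on an external ingredient (see below), whereas the paper's explicit points make the proof entirely self-contained.

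Two points need repair. First, in your paragraph on $\Lambda=\{\lambda,\pm\}$ with $\Theta$ generic, the inequalities are reversed: to have $O(\psi)\subseteq V_{(\lambda,\lambda)}^{\mathbf{B}}\subseteq V_\Lambda^{\mathbf{D}}$ you need $\psi\not\unlhd_B(\lambda,\lambda)$, and to have $O(\psi)\not\subseteq V_\Theta^{\mathbf{D}}$ you need $\psi\unlhd_B(\theta,\omega)$ (or $(\omega,\theta)$), not the other way around. Once corrected, this case is actually immediate from $V_{(\lambda,\lambda)}^{\mathbf{B}}\not\subseteq V_{(\theta,\omega)}^{\mathbf{B}}$ and needs no sign refinement at all. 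Second, and more substantively, the ``cut'' step is the crux of your argument and deserves a precise citation: you are using that for every bipartition $(\theta,\omega)$ there exists an orbit type $\psi_0$ with $O(\psi_0)\neq\emptyset$ and $V_{\psi_0}^{\mathbf{B}}=V_{(\theta,\omega)}^{\mathbf{B}}$, so that $\psi_0\unlhd_B(\gamma,\delta)\Leftrightarrow(\theta,\omega)\unlhd_B(\gamma,\delta)$ for all $(\gamma,\delta)$. This is exactly what lets a \emph{single} $\psi$ simultaneously witness $\psi\not\unlhd_B(\lambda,\mu)$ and $\psi\not\unlhd_B(\mu,\lambda)$ in the generic--generic case, and it does not follow from \cref{thm:SnBnEquivalence} alone; it is a separate structural fact from \cite{debus_moustrou_riener_verdure} (the ``cut'' construction there) and should be stated as such. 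The paper sidesteps this entirely by producing $P,Q,R$ explicitly, which is why its proof requires no auxiliary $\mathbf{B}$-combinatorics beyond \cref{thm:SnBnEquivalence}.
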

\begin{proof}
\begin{itemize}
    \item We first suppose that $\Lambda = \{\lambda,\mu\}$ and $\Theta = \{\theta,\omega\}$ for some partitions $\lambda \neq \mu$ and $\theta \neq \omega$ and let $m = \max\{\len (\theta),\len(\omega)\}$. 
We consider the following points in $\KK^n$
\begin{align*}
 P = & (\underbrace{0,\ldots,0}_{\theta_1},\underbrace{1,\ldots,1}_{\omega_1+\theta_2},\ldots,\underbrace{m,\ldots,m\}}_{\omega_{m}+\theta_{m+1}},  \quad 
 Q =  (\underbrace{0,\ldots,0}_{\omega_1},\underbrace{1,\ldots,1}_{\theta_1+\omega_2},\ldots,\underbrace{m,\ldots,m\}}_{\theta_{m}+\omega_{m+1}} \\ R =  &(\underbrace{1,\ldots,1}_{\theta_1+\omega_1},\underbrace{2,\ldots,2}_{\theta_2+\omega_2},\ldots,\underbrace{m,\ldots,m\}}_{\theta_{m}+\omega_{m}},     \end{align*}
 where $\omega_i = 0$ and $\theta_j = 0$ for $i > \len (\omega), j > \len (\theta)$. 
We have $P,Q,R \not\in V_\Theta^{\mathbf{D}}$. 
This can be justified as follows: The bidiagram of shape $(\theta,\omega)$ can be filled with $R$ such that the $i$-th row contains only $i$'s, for all $1 \leq i \leq m$. The bidiagram of shape $(\theta,\omega)$ can be filled with $P$ such that $\theta_i$ contains only $(i-1)$'s and $\omega_i$ only $i$'s, and the bidiagram of $(\omega,\theta)$ can be filled similarly with $Q$. 
Then each column is filled with pairwise distinct entries and there is never a $0$ in the right hand side diagram. 
This implies $P,Q,R \not \in V_\Lambda^{\mathbf{D}}$ by assumption. 
We deduce for all integers $k$:
\begin{itemize}
    \item Since $P \not \in  V_{(\lambda,\mu)}^{\mathbf{B}}$ or $P \not \in  V_{(\mu,\lambda)}^{\mathbf{B}}$ we have $\theta_{k+1} + \sum_{i=1}^k (\theta_i+\omega_i) \leq \lambda_{k+1} + \sum_{i=1}^k (\lambda_i+\mu_i) $ for all $k$ or $\theta_{k+1} + \sum_{i=1}^k (\theta_i+\omega_i) \leq \mu_{k+1} + \sum_{i=1}^k (\lambda_i+\mu_i)$ for all $k$ (because there is a filling of $(\lambda,\mu)$ or $(\mu,\lambda)$ with $P$ such that there is no $0$ in the right hand side tableau and each column contains pairwise distinct entries).
    \item Since $Q \not \in  V_{(\lambda,\mu)}^{\mathbf{B}}$ or $Q \not \in  V_{(\mu,\lambda)}^{\mathbf{B}}$ we have $\omega_{k+1} + \sum_{i=1}^k (\theta_i+\omega_i) \leq \lambda_{k+1} + \sum_{i=1}^k (\lambda_i+\mu_i) $ for all $k$ or $\omega_{k+1} + \sum_{i=1}^k (\theta_i+\omega_i) \leq \mu_{k+1} + \sum_{i=1}^k (\lambda_i+\mu_i) $ for all $k$ (the argumentation goes analogously).
    \item Since $R \not \in V_{(\lambda,\mu)}^{\mathbf{B}} \cap V_{(\mu,\lambda)}^{\mathbf{B}}$ we have $\sum_{i=1}^k (\theta_i+\omega_i) \leq \sum_{i=1}^k (\lambda_i+\mu_i)$ (because there is a filling of $(\theta,\omega)$ with $R$ such that each column contains pairwise distinct entries).
 \end{itemize} 
This shows $\Theta \unlhd_D \Lambda$. 

\item Next, we suppose $\Lambda = \{\lambda,\pm\}$ and $\Theta = \{\theta,\omega\}$ for some partitions $\theta \neq \omega$. 
Since $V_{(\lambda,\lambda)}^{\mathbf{B}} \subseteq V_{\{\lambda,\pm\}}^{\mathbf{D}} \subseteq V_\Theta^{\mathbf{D}}$ we use the points $P,Q,R$ above to show that $\theta_{k+1} + \sum_{i=1}^k (\theta_i+\omega_i) \leq \lambda_{k+1} + 2\sum_{i=1}^k \lambda_i $, $\omega_{k+1} + \sum_{i=1}^k (\theta_i+\omega_i) \leq \lambda_{k+1} + 2\sum_{i=1}^k \lambda_i $ and $\sum_{i=1}^k (\theta_i+\omega_i) \leq 2\sum_{i=1}^k \lambda_i $ hold for all integers $k$. 
We again have $\Theta \unlhd_D \Lambda$. 

\item If $\Lambda =  \{\lambda,\pm\}$ and $\Theta = \{\theta,\pm\}$ we have either $\Lambda = \Theta$ or $\lambda \neq \theta$ according to \cref{prop:Specht varieties I}.
In the latter case we have by \cref{prop:Specht varieties I}
\begin{align*}
 & V_{(\lambda,\lambda)}^{\mathbf{B}} \subseteq V_{\Lambda}^{\mathbf{D}} \subseteq V_{\{\theta,+\}}^{\mathbf{D}} = V_{(\theta,\theta)}^{\mathbf{B}} \cup O(\theta,-), 
 & V_{(\lambda,\lambda)}^{\mathbf{B}} \subseteq V_{\Lambda}^{\mathbf{D}} \subseteq V_{\{\theta,-\}}^{\mathbf{D}} = V_{(\theta,\theta)}^{\mathbf{B}} \cup O(\theta,+).
\end{align*} 
Since the variety $V_{(\lambda,\lambda)}^{\mathbf{B}}$ is closed under sign switching, we have $V_{(\lambda,\lambda)}^{\mathbf{B}} \subseteq V_{(\theta,\theta)}^{\mathbf{B}}$, and thus, by \cref{thm:SnBnEquivalence}, $(\lambda,\lambda) \unrhd_B (\theta,\theta)$.
This proves that indeed $\Theta \unlhd_B \Lambda$ holds.

\item Finally, we consider the case that $\Lambda = \{\lambda,\mu\}$ and $\Theta = \{\theta,\pm\}$. 
We restrict to the case $\Theta = \{\theta,+\}$. The argument goes analogously for $\{\theta,-\}$.
We consider the points $P=Q, R$ as above.
Then we have $P, R \not \in V_{\Theta}^{\mathbf{D}}$ and thus $P, R \not \in V_{\Lambda}^{\mathbf{D}}$ by assumption.
This shows $\sum_{i=1}^k 2\theta_i \leq \sum_{i=1}^k (\lambda_i + \mu_i )$ for all integers $k$, and $\theta_{k+1}+\sum_{i=1}^k 2\theta_i \leq \lambda_{k+1} \sum_{i=1}^k (\lambda_i + \mu_i )$ for all integers $k$ or $\theta_{k+1}+\sum_{i=1}^k 2\theta_i \leq \mu_{k+1} \sum_{i=1}^k (\lambda_i + \mu_i )$ for all integers $k$. 
Thus, we have $\Theta \unlhd_D \Lambda$.
\end{itemize}
\end{proof}

We can now formulate a short proof of \cref{thm:main}.
\begin{proof}[Proof of Theorem \ref{thm:main}]
The implication (A) implies (B) was proven in \cref{lem:cover implies ideal inclusion}.
That (B) implies (C) is clear.
Finally, (C) implies (B) is the statement of \cref{lem:(C) implies (A)}.
\end{proof}

In contrast to the $S_n$ and $B_n$ cases, the $\mathbf{D}$-Specht varieties cannot be described as a disjoint union of orbit sets using the partial order $\unlhd_D$.

\begin{theorem} \label{thm:nounionanalogue}
In general, there does not exist a family of subsets $O({\Lambda})\subseteq \KK^n$, indexed by $\Lambda \in \mathcal{D}_n$, forming a set partition of $\KK^n$, such that for every $\Theta \in \mathcal{D}_n$ the identity $V_\Theta^\mathbf{D} = \bigcup_{\Lambda \not \unlhd_D \Theta} O(\Lambda)$ holds.
\end{theorem}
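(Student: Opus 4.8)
The plan is to argue by contradiction. Suppose that for some even $n$ there is a family $\{O(\Lambda)\}_{\Lambda\in\mathcal D_n}$ of pairwise disjoint subsets of $\KK^n$ covering $\KK^n$ with $V_\Theta^\mathbf D=\bigcup_{\Lambda\not\unlhd_D\Theta}O(\Lambda)$ for every $\Theta\in\mathcal D_n$. Taking complements and using disjointness, this is the same as $(V_\Theta^\mathbf D)^c=\bigsqcup_{\Lambda\unlhd_D\Theta}O(\Lambda)$; in particular, for any fixed $\Lambda_0$ with $O(\Lambda_0)\neq\emptyset$ one has $O(\Lambda_0)\subseteq V_\Theta^\mathbf D$ if and only if $\Lambda_0\not\unlhd_D\Theta$. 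The goal is then to exhibit a single $n$ (I will use $n=8$) where these constraints are inconsistent.

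First I would identify the sets $O(\{\lambda,\pm\})$ for $\lambda\vdash n/2$. As recorded in the proof of \cref{lem: D_n is poset}, passing from $\widetilde{\mathcal D}_n$ to $\mathcal D_n$ replaces the node $\{\lambda,\lambda\}$ by the two incomparable nodes $\{\lambda,+\},\{\lambda,-\}$ with the same parents and children; concretely, the strict lower sets of $\{\lambda,+\}$ and of $\{\lambda,-\}$ in $(\mathcal D_n,\unlhd_D)$ coincide, say both equal $\mathcal S$. Hence, from the reformulated hypothesis, the only index lying in one lower set but not the other is $\{\lambda,+\}$ itself, so $(V_{\{\lambda,-\}}^\mathbf D)^c\setminus(V_{\{\lambda,+\}}^\mathbf D)^c=O(\{\lambda,+\})$. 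On the other hand, \cref{prop:Specht varieties I}(2) gives $V_{\{\lambda,+\}}^\mathbf D=V_{(\lambda,\lambda)}^\mathbf B\cup O(\lambda,-)$ and $V_{\{\lambda,-\}}^\mathbf D=V_{(\lambda,\lambda)}^\mathbf B\cup O(\lambda,+)$, and $O(\lambda,\pm)$ is disjoint from $V_{(\lambda,\lambda)}^\mathbf B$ (its points have no zero coordinate and $\mathbf B$-orbit type $\unlhd_B(\lambda,\lambda)$, hence lie outside $V_{(\lambda,\lambda)}^\mathbf B$ by \cref{thm:SnBnEquivalence}). Subtracting yields $V_{\{\lambda,-\}}^\mathbf D\setminus V_{\{\lambda,+\}}^\mathbf D=O(\lambda,+)$, whence $O(\{\lambda,+\})=O(\lambda,+)$; likewise $O(\{\lambda,-\})=O(\lambda,-)$. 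In characteristic zero these sets are non-empty.

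The contradiction exploits that every point of $O(\lambda,\pm)$ has the same $\mathbf B$-orbit type $(\emptyset,(2\lambda_1,\ldots,2\lambda_l))$, whereas $\{\lambda,\pm\}\unlhd_D\{\mu,\nu\}$ is governed by the strictly stronger condition $(\lambda,\lambda)\unlhd_B(\mu,\nu)$ or $(\lambda,\lambda)\unlhd_B(\nu,\mu)$ (\cref{def:DnPoset}). Concretely, take $n=8$, $\lambda=(3,1)$ and $\Theta=\{(4),(2,2)\}$. A direct verification of the bidominance inequalities shows $(\emptyset,(6,2))\unlhd_B((4),(2,2))$, yet $((3,1),(3,1))\not\unlhd_B((4),(2,2))$ and $((3,1),(3,1))\not\unlhd_B((2,2),(4))$; hence $\{(3,1),+\}\not\unlhd_D\{(4),(2,2)\}$. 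By the previous paragraph this forces $O((3,1),+)=O(\{(3,1),+\})\subseteq V_{\{(4),(2,2)\}}^\mathbf D$. But $V_{\{(4),(2,2)\}}^\mathbf D=V_{((4),(2,2))}^\mathbf B\cap V_{((2,2),(4))}^\mathbf B\subseteq V_{((4),(2,2))}^\mathbf B$, and since the points of $O((3,1),+)$ have $\mathbf B$-orbit type $(\emptyset,(6,2))$ with $(\emptyset,(6,2))\unlhd_B((4),(2,2))$, they all lie \emph{outside} $V_{((4),(2,2))}^\mathbf B$ by \cref{thm:SnBnEquivalence}. Thus $O((3,1),+)\cap V_{\{(4),(2,2)\}}^\mathbf D=\emptyset$, contradicting $O((3,1),+)\neq\emptyset$.

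I expect the main obstacle to be locating the pair $\lambda$ and $\{\mu,\nu\}$: one needs the ``collapsed'' partition $(2\lambda_1,\ldots,2\lambda_l)$ to be $\mathbf B$-dominated by $(\mu,\nu)$ while $(\lambda,\lambda)$ is dominated by \emph{neither} orientation $(\mu,\nu)$, $(\nu,\mu)$ — a short case analysis rules this out for $n\le 6$, and the first witness occurs at $n=8$. Once the triple is fixed, the remaining work is the routine verification of a handful of bidominance inequalities, together with the observation (immediate from \cref{thm:SnBnEquivalence}) that membership of a point in a $\mathbf B$-Specht variety depends only on its $\mathbf B$-orbit type, which is what licenses treating whole orbit sets at once.
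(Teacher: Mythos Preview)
Your argument is correct (up to a harmless orientation slip: in the displayed identity you should have $(V_{\{\lambda,+\}}^\mathbf D)^c\setminus(V_{\{\lambda,-\}}^\mathbf D)^c=O(\{\lambda,+\})$, not with $+$ and $-$ swapped; the subsequent computation $V_{\{\lambda,-\}}^\mathbf D\setminus V_{\{\lambda,+\}}^\mathbf D=O(\lambda,+)$ and the conclusion $O(\{\lambda,+\})=O(\lambda,+)$ are fine). The bidominance checks for $(\emptyset,(6,2))\unlhd_B((4),(2,2))$ and $((3,1),(3,1))\not\unlhd_B((4),(2,2)),((2,2),(4))$ go through, and the contradiction is genuine.

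However, your route differs substantially from the paper's. The paper works at $n=5$ with the three pairwise incomparable dipartitions $\Lambda=\{(3,2),\emptyset\}$, $\Omega=\{(2),(2,1)\}$, $\Theta=\{(1,1),(3)\}$, observes that everything strictly below $\Omega$ or $\Theta$ is also below $\Lambda$ and everything strictly above $\Lambda$ is above both $\Omega$ and $\Theta$, and then exhibits a concrete point $(a,a,a,a,b)\in V_\Lambda^\mathbf D\setminus(V_\Omega^\mathbf D\cup V_\Theta^\mathbf D)$, which would be forced to lie in $O(\Omega)\cup O(\Theta)$ yet simultaneously in both $V_\Omega^\mathbf D$ and $V_\Theta^\mathbf D$. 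Your approach instead exploits the $\pm$ dipartitions: using \cref{prop:Specht varieties I}(2) you identify $O(\{\lambda,\pm\})$ with the explicit sign-orbit set $O(\lambda,\pm)$, and then find a $\{\mu,\nu\}$ with $(\emptyset,(2\lambda_1,\ldots))\unlhd_B(\mu,\nu)$ but $(\lambda,\lambda)\not\unlhd_B(\mu,\nu),(\nu,\mu)$. The trade-off: the paper's proof is shorter, needs only odd $n=5$, and avoids the $\pm$ machinery entirely; your proof is more structural in that it actually pins down some of the hypothetical $O(\Lambda)$'s and shows the obstruction is a mismatch between the $\mathbf B$-orbit type $(\emptyset,(2\lambda_i))$ and the bipartition $(\lambda,\lambda)$ governing $\unlhd_D$, but it requires even $n$ and the first instance is $n=8$.
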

In fact, the didominance order $\unlhd_D$ is the only partial order $\preccurlyeq$ on $\mathcal{D}_n$ that could satisfy $V_\Theta^\mathbf{D} = \bigcup_{\Lambda \not \preccurlyeq \Theta} O(\Lambda)$ for all $\Theta \in \mathcal{D}_n$.
Note that $V_\Lambda^\mathbf{D} \subseteq V_\Theta^\mathbf{D}$ is equivalent to $\Lambda \unrhd_D \Theta$ by \cref{thm:main} and thus $\Omega \preccurlyeq \Theta$ is equivalent to $\Omega \unlhd_D \Lambda$.
\begin{proof}
We suppose that there is a set partition $(O({\Lambda}))_{\Lambda \in \mathcal{D}_n}$ of $\KK^n$ such that $V_{\Theta}^\mathbf{D} = \bigcup_{\Lambda \not \unlhd_D \Theta} O({\Lambda})$ for every $\Theta \in \mathcal{D}_n$. 
For $n=5$ there are three pairwise incomparable dipartitions $\Lambda= \{(3,2),\emptyset\}$, $\Omega= \{(2),(2,1)\}$ and $\Theta=\{(1,1),(3)\}$ (see \cref{fig:D5_Poset}).
We observe that any dipartition which is strictly smaller than $\Omega$ or $\Theta$ is also smaller than $\Lambda$. 
Moreover, any dipartition strictly larger than $\Lambda$ is also larger than both $\Omega$ and $\Theta$ and for non-zero integers $a, b$ with $a^2 \neq b^2$ we have $(a,a,a,a,b) \in V_\Lambda^\mathbf{D} \setminus (V_\Omega^\mathbf{D} \cup V_\Theta^\mathbf{D})$. 
Therefore, we have $(a,a,a,a,b) \in O(\Omega) \cup O(\Theta)$ since $V_\Lambda^\mathbf{D}$ is the maximal element in the poset of $\mathbf{D}$-Specht varieties with respect to inclusion which contains $(a,a,a,a,b)$ and $\Omega$ and $\Theta$ are the only dipartitions of $5$ incomparable to $\Lambda$.
But since $\Omega$ and $\Theta$ are also incomparable, we have $O(\Theta)\subseteq V_\Omega^\mathbf{D}$ and vice versa. 
This shows that no set partitions depending only on the orbit type and respecting the didominance order can describe the $\mathbf{D}$-Specht varieties. 
\end{proof}

\section{Concluding remarks}\label{sec:conclusion remarks}
We defined the didominance order $\unlhd_D$ in \cref{def:DnPoset} with the purpose of capturing $\mathbf{D}$-Specht ideal inclusions analogously to the $S_n$ and $B_n$ cases. 
An alternative intuitive definition of $\mathbf{D}$-Specht ideals might be to define them as the intersection of the corresponding $\mathbf{B}$-Specht ideals.
However, the ideals $J_{\{\lambda,\mu\}}^\mathbf{D} \coloneqq  I_{(\lambda,\mu)}^\mathbf{B} \cap I_{(\mu,\lambda)}^\mathbf{B}$ and $J_{\{\lambda,\pm\}}^\mathbf{D} \coloneqq  I_{\{\lambda,\pm\}}^\mathbf{D}$ are not necessarily distinct for distinct dipartitions.
For instance, for $n=4$ we have \[J_{\{(2),(1,1)\}}^\mathbf{D} = J_{\{(1),(2,1)\}}^\mathbf{D} = I_{((1),(2,1))}^\mathbf{B} = \left( (X_{\tau(1)}^2-X_{\tau(2)}^2)X_{\tau(1)}X_{\tau(2)}X_{\tau(3)} \, \vert \, \tau \in S_4\right).\]
While it is known that $\mathbf{S}$-Specht ideals are radical \cite{Murai2022Specht, woo2005ideals}, it is an open question whether the $\mathbf{B}$-Specht ideals are radical. 
Computational evidence for $n \leq 6$ and partially for $n=7$ using SAGE \cite{sagemath} suggests that $\mathbf{D}$-Specht ideals are also radical.
However, the radicality of the Specht ideals for $S_n$, and $B_n, D_n$ when $n$ is small might actually be a special case. Most Specht ideals of the dihedral groups are not radical (see \cref{thm:main2dihedral}).
\begin{question}
Are the ideals $I_{\Lambda}^\mathbf{D} \subseteq \KK[\underline{X}]$ radical? 
\end{question}
Furthermore, for any dipartition $\{\lambda,\mu\}$ with $\lambda \neq \mu$ the respective isotypic component in $\KK[\underline{X}]$ is contained in the $\mathbf{D}$-Specht ideal (\cref{cor:isotypic decomposition}), but for $\lambda = \mu$ the question remains open.
\begin{question}
Is the isotypic component of $\{\lambda,\pm\}$ in $\KK[\underline{X}]$ contained in $I_{\{\lambda,\pm\}}^\mathbf{D}$?
\end{question}
For the sequence of complex reflection groups of types $G(r,p,n)$ ($r,p,n \geq 1$ and $p  \mid  r$), which are groups of permutation matrices whose non-zero entries are certain powers of a primitive $r$-th root of unity, (higher) Specht polynomials have been defined in \cite{morita1998higher}.
The sequence contains $S_n$ for $r=p=1$, $B_n$ for $r=2, p=1$ and $D_n$ for $r=p=2$.
A question is therefore if Specht ideals and varieties in this general setting give analogous results to the $S_n,B_n$ and $D_n$ cases.
However, in general the varieties of distinct representations can be equal.
Consider for instance the irreducible representations of $G(r,1,n)$ which correspond to $r$-tuples of integer partitions whose sizes sum to $n$.
Already for $r=3$ and $n=2$ we find that \[
\ytableausetup{smalltableaux,centertableaux} \setlength{\delimitershortfall}{-5pt} \left( \emptyset, \emptyset, \begin{ytableau} {1} & {2}  \end{ytableau}\right) \text{ and } \left( \emptyset, \begin{ytableau} {1} & {2}  \end{ytableau}, \emptyset\right)
\]
have Specht ideal varieties $ V(X_1^2X_2^2) \text{ and } V(X_1X_2)$ which are equal although the Specht ideals are distinct. 
Thus \cref{thm:SnBnEquivalence} and \cref{thm:main} cannot hold in general for the complex reflection groups $G(r,p,n)$.
However, we think that an appropriate order can be defined such that analogous statements to $(A) \Leftrightarrow (B) \Rightarrow (C)$ in \cref{thm:main} still hold.

\section*{Acknowledgment}
We thank Thomas Jahn for fruitful discussions and helpful remarks.

\printbibliography
 \newpage

\begin{appendix} \label{Appendix}

\section{} \label{appendix}
\section*{Proof of \cref{observation1}}
We present a proof of \cref{observation1}. 
\begin{proof}[Proof of \cref{observation1}] \label{proof of Observation1}
Suppose $\spe_{(T,S),+}^{\mathbf{D}} \mapsto f = \sum_{\alpha \in \NN^n} c_\alpha X_1^{\alpha_1}\cdot \ldots \cdot X_n^{\alpha_n}$ defines a $D_n$-equivariant isomorphism. 
For $k \neq l \in [n]$ consider the map $\sigma_{k,l} : \KK[\underline{X}] \to \KK[\underline{X}]$ which only changes the signs of $X_k$ and $X_l$ and the map $\tau_{k,l}  :\KK[\underline{X}] \to \KK[\underline{X}]$ induced by the transposition $(k~l)$. 
Then
$\sigma_{k,l}(f) = f$ if $k,l \in T$ or $k,l \in S$ (Case (1)) and otherwise $\sigma_{k,l}(f)=-f$ (Case (2)), and $\tau_{k,l}(f)=-f$ if $k,l$ are in the same column of $T$ or $S$.
\begin{itemize}
    \item[(a)] First, we exploit the consequences of $\sigma_{k,l}$ which can only change the signs of monomials in $f$.
In Case (2) each monomial $\underline{X}^\alpha\coloneqq X_1^{\alpha_1}\cdot \ldots \cdot X_n^{\alpha_n}$ in $f$ must have at least one odd exponent. 
If $\alpha_i$ is odd for $i \in T$, Case (1) implies that then every $ \alpha_j$ with $j \in T$ must have odd exponent in $\underline{X}^\alpha$.
Furthermore, Case (2) shows then that every $X_j$ with $j \in S$ must have even exponent in $\underline{X}^\alpha$.
In particular, every monomial in $f$ is divisible by $\prod_{i \in T}X_i$ or $\prod_{i \in S}X_i$ and for every monomial $\underline{X}^\alpha$ either all $\alpha_i$ for $i \in T$ are odd and $\alpha_i$ for $i \in S$ are even, or vice versa. Therefore, $f$ is of the form 
\begin{align} \label{eq:form D_n equivariant}
f(\underline{X}) = h_1(\underline{X}^2)\prod_{i \in T}X_i +  h_2(\underline{X}^2)\prod_{j \in S}X_j
\end{align}
with $h_1,h_2 $ being $n$-variate polynomials with coefficients in $\KK$ and $\tau \cdot h_1(\underline{X}^2) = h_2(\underline{X}^2)$.
This is, since $\tau (f) = f$, $\tau(\prod_{i \in T}X_i)=\prod_{i \in S}X_i$ and the parity of the exponents show that by applying $\tau$ each monomial switches from $h_1$ to $h_2$ and vice versa.
\item[(b)]
Moreover, using $\tau_{k,l}$ for $k\neq l$ within the same column of $T$ or $S$ we find $\tau_{k,l}(f)+f=0$ and thus setting $X_k^2 = X_l^2$ gives $h_1(\underline{X}^2)\prod_{i \in T}X_i + h_2(\underline{X}^2)\prod_{i \in S}X_i =0$. 
Note that both $h_1(\underline{X}^2)\prod_{i \in T}X_i $ and $h_2(\underline{X}^2)\prod_{i \in S}X_i $ must be zero since the support of these polynomials has empty intersection.
Thus both $h_1(\underline{X}^2)$ and $h_2(\underline{X}^2)$ are zero whenever $k\neq l$ are within the same column of $T$ or $S$.
Then $X_k^2-X_l^2$ divides these polynomials, which shows that $h_1(\underline{X}^2)$ and $h_2(\underline{X}^2)$ are divisible by $\spe_T^{\mathbf{S}}(\underline{X}^2)\spe_S^{\mathbf{S}}(\underline{X}^2)$.
Combining the observations we find that $f$ is indeed of the form as claimed in \cref{observation1}.
\end{itemize}
An analogous argument shows that the same holds for equivariant isomorphisms $\spe_{(T,S),-}^\mathbf{D} \mapsto f \in \KK[\underline{X}]$.
\end{proof}
Moreover, any transposition $\tau_{k,l}$ defines an equivalence class on the monomials of $f$ by saying $\underline{X}^\alpha \equiv \underline{X}^\beta$ if $\tau_{k,l}(\underline{X}^\alpha) = \underline{X}^\beta$. 
Then we see that for $k\neq l$ from the same column of $T$ or $S$ the monomials $X^\alpha$ and $X^\beta$ have coefficients $c \in \KK$ and $-c \in \KK$. 
In particular, the polynomial $R \in \KK[\underline{X}]$ in \cref{observation1} cannot be arbitrary.

\section*{Calculation of $C$ in \cref{eq:C} in \cref{lem:cover implies ideal inclusion}}
\begin{proposition}\label{prop:Ccalculataion}
The coefficient $C$ in \cref{eq:C} in the proof of \cref{lem:cover implies ideal inclusion} equals $b!$.
\end{proposition}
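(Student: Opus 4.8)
The plan is to make the substitution $\underline{Y} := \underline{X}^2$ explicit and reduce \cref{eq:C} to a purely symmetric-function identity in the variables $Y_1,\dots,Y_{b+1}$ (the variables indexed by $A \cup B_1 = \{1,\dots,b+1\}$), treating the variables indexed by $B_2 = \{b+2,\dots,2b\}$ as spectators. Recall
\[
Q_2(\underline{X}) X_1 = \Delta_{B_1}(\underline{Y})\,\Delta_{A\cup B_2}(\underline{Y})\prod_{i\in A\cup B_2}X_i ,
\qquad
P(\underline{X}) = \Delta_{A\cup B_1}(\underline{Y})\,\Delta_{B_2}(\underline{Y})\prod_{i\in B_2}X_i .
\]
When $\sigma \in S_{A\cup B_1}$ acts, it permutes $\{X_1,\dots,X_{b+1}\}$ (hence $\{Y_1,\dots,Y_{b+1}\}$) and fixes everything with index in $B_2$. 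The factors $\Delta_{A\cup B_2}(\underline{Y})$ and $\prod_{i\in A\cup B_2}X_i$ are \emph{not} invariant because they involve the index $1\in A$. I would first write $\Delta_{A\cup B_2}(\underline{Y}) = \prod_{j\in B_2}(Y_1-Y_j)\cdot \Delta_{B_2}(\underline{Y})$ and $\prod_{i\in A\cup B_2}X_i = X_1\prod_{i\in B_2}X_i$, so that
\[
\sum_{\sigma\in S_{A\cup B_1}}\sgn(\sigma)\,\sigma\bigl(Q_2(\underline{X})X_1\bigr)
= \Delta_{B_2}(\underline{Y})\Bigl(\prod_{i\in B_2}X_i\Bigr)\cdot
\sum_{\sigma\in S_{A\cup B_1}}\sgn(\sigma)\,\sigma\!\left(\Delta_{B_1}(\underline{Y})\,Y_1\prod_{j\in B_2}(Y_1-Y_j)\right),
\]
and similarly $P(\underline{X}) = \Delta_{B_2}(\underline{Y})\bigl(\prod_{i\in B_2}X_i\bigr)\cdot\Delta_{A\cup B_1}(\underline{Y})$. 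Cancelling the common factor $\Delta_{B_2}(\underline{Y})\prod_{i\in B_2}X_i$, it suffices to prove the identity (now in variables $Y_1,\dots,Y_{b+1}$, with the $Y_j$ for $j\in B_2$ as parameters)
\begin{equation}\label{eq:plan-key}
\sum_{\sigma\in S_{b+1}}\sgn(\sigma)\,\sigma\!\left(\Delta_{\{2,\dots,b+1\}}(\underline{Y})\;Y_1\prod_{j\in B_2}(Y_1-Y_j)\right)
= b!\cdot \Delta_{\{1,\dots,b+1\}}(\underline{Y}).
\end{equation}

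The next step is to recognize both sides via the classical antisymmetrization/Vandermonde machinery. The operator $f\mapsto \sum_{\sigma}\sgn(\sigma)\sigma f$ is the alternation operator $\mathfrak a$ on $\KK[Y_1,\dots,Y_{b+1}]$; its image lies in $\Delta_{\{1,\dots,b+1\}}(\underline{Y})\cdot\KK[Y_1,\dots,Y_{b+1}]^{S_{b+1}}$, and more precisely $\mathfrak a(Y^{\delta}) = \Delta_{\{1,\dots,b+1\}}(\underline{Y})$ where $\delta=(b,b-1,\dots,1,0)$ is the staircase. The factor $\Delta_{\{2,\dots,b+1\}}(\underline{Y})$ is, up to sign, the Vandermonde in the variables $Y_2,\dots,Y_{b+1}$, i.e. it carries a ``staircase minus the top step'' exponent pattern $(0, b-1, b-2, \dots, 1, 0)$ on $(Y_1,\dots,Y_{b+1})$; multiplying by $Y_1$ promotes the exponent of $Y_1$ from $0$ to $1$. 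The remaining factor $\prod_{j\in B_2}(Y_1-Y_j)$ is a polynomial of degree $b-1$ in $Y_1$ alone (with coefficients that are symmetric — indeed elementary symmetric — in the spectator variables $Y_j$, $j\in B_2$, hence $S_{b+1}$-invariant and pulling out of $\mathfrak a$). So the left side of \cref{eq:plan-key} is $\mathfrak a$ applied to $\Delta_{\{2,\dots,b+1\}}(\underline{Y})$ times a polynomial that is a sum of monomials $Y_1^{1+k}$, $0\le k\le b-1$, with invariant coefficients $(-1)^k e_k$. I would expand and argue that in $\mathfrak a\bigl(\Delta_{\{2,\dots,b+1\}}(\underline{Y})\,Y_1^{1+k}\bigr)$ only the term $k=b-1$ survives: the exponent vector of $\Delta_{\{2,\dots,b+1\}}(\underline{Y})\,Y_1^{1+k}$ is $(1+k,\,b-1,\,b-2,\,\dots,\,1,\,0)$, and $\mathfrak a$ kills any monomial with a repeated exponent; the only way this vector is a permutation of the staircase $\delta=(b,b-1,\dots,1,0)$ is $1+k=b$, i.e. $k=b-1$, and otherwise $1+k\in\{1,\dots,b-1\}$ duplicates an existing entry.

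Thus the left side equals $(-1)^{b-1}e_{b-1}(Y_j: j\in B_2)\cdot \mathfrak a\bigl(Y_1^{b}Y_2^{b-1}\cdots Y_{b+1}^{0}\bigr) = (-1)^{b-1}e_{b-1}\cdot \Delta_{\{1,\dots,b+1\}}(\underline{Y})$ — wait, that would give a spurious $e_{b-1}$ factor, so I need to be more careful: the coefficient of $Y_1^{b-1}\cdot Y_1^{1}$ coming from $\prod_{j\in B_2}(Y_1-Y_j)$ times $Y_1$ is just the leading coefficient $1$ (the $Y_1^{b-1}$ term of the product has coefficient $1$), so in fact the surviving contribution carries coefficient $+1$, and the left side equals $\Delta_{\{1,\dots,b+1\}}(\underline{Y})$ — not $b!\,\Delta$. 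This discrepancy is exactly the crux I expect to be the main obstacle, and its resolution is what makes $C=b!$ rather than $C=1$: the alternating sum $\sum_{\sigma\in S_{A\cup B_1}}\sgn(\sigma)\sigma$ is \emph{not} the naive alternation operator once we remember (cf. \cref{rem:finite characteristic}) that $Q_2(\underline{X})X_1$ is already \emph{symmetric} under the subgroup $S_{B_1}$ acting on the variables $X_2,\dots,X_{b+1}$ — hmm, actually $\Delta_{B_1}(\underline{Y})$ is antisymmetric, not symmetric, under $S_{B_1}$, so $\sgn(\sigma)\sigma$ fixes $Q_2 X_1$ for $\sigma\in S_{B_1}$. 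Therefore the sum over $S_{A\cup B_1}$ is $|S_{B_1}| = b!$ times the sum over coset representatives of $S_{A\cup B_1}/S_{B_1}$, and it is the \emph{coset} sum that equals $\Delta_{\{1,\dots,b+1\}}(\underline{Y})$ (this is precisely \cref{eq:modified sum} in \cref{rem:finite characteristic}). Hence $C = b!$.

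Concretely, then, the proof I would write has three steps: (1) cancel the common spectator factor $\Delta_{B_2}(\underline{X}^2)\prod_{i\in B_2}X_i$ and reduce to \cref{eq:plan-key}; (2) show $\sgn(\sigma)\sigma\bigl(Q_2(\underline{X})X_1\bigr) = Q_2(\underline{X})X_1$ for all $\sigma\in S_{B_1}$, using that $\Delta_{B_1}(\underline{X}^2)$ is alternating in $X_2^2,\dots,X_{b+1}^2$ while all other factors of $Q_2 X_1$ are symmetric in $X_2,\dots,X_{b+1}$, so the $S_{A\cup B_1}$-sum factors as $b!$ times a sum over left coset representatives of $S_{A\cup B_1}/S_{B_1}$; (3) evaluate that coset sum: pick the $b$ representatives $\tau_0=\mathrm{id},\tau_1=(1\ 2),\dots$ more cleanly the transpositions $(1\ i)$ for $i\in\{1,\dots,b+1\}$ (with $(1\ 1):=\mathrm{id}$), expand each $\tau$-image, use the exponent/staircase argument above to see that after summing over these $b$ representatives the result collapses to a single nonzero antisymmetric polynomial, and identify it with $\Delta_{A\cup B_1}(\underline{X}^2)$ by comparing a single monomial — e.g. the coefficient of $X_1^{2b}X_2^{2(b-1)}\cdots X_{b+1}^{0}$, which is $1$ on both sides. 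Multiplying back the $b!$ from step (2) yields $C=b!$. The main obstacle is step (3): keeping the bookkeeping of signs under the $S_{A\cup B_1}$-action straight and correctly isolating the unique surviving monomial pattern; the staircase-exponent vanishing lemma ($\mathfrak a$ annihilates monomials with repeated exponents) is the tool that tames it.
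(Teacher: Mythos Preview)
Your proposal is correct and arrives at $C=b!$, but by a genuinely different route than the paper's proof.

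The paper proceeds in two stages. First it shows that $\sum_{\sigma}\sgn(\sigma)\sigma(Q_2X_1)$ is a scalar multiple of $P$ by a \emph{root-comparison} argument: after factoring out $\Delta_{B_2}(\underline{X}^2)\prod_{i\in B_2}X_i$ (your step~(1)), it checks case by case that the remaining sum vanishes whenever $\Delta_{A\cup B_1}(\underline{X}^2)$ does, and then invokes equality of degrees. Second, it identifies $C$ as the leading lex coefficient, observing that only permutations with $\sigma(1)=1$ (i.e.\ $\sigma\in S_{B_1}$) can contribute to the monomial $X_1^{2b}X_2^{2(b-1)}\cdots$, and that each such summand contributes $+1$ because the sign of $\sigma$ cancels the sign picked up from permuting the factors of $\Delta_{B_1}$.

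You instead compute the alternation directly: expand $\prod_{j\in B_2}(Y_1-Y_j)$ in powers of $Y_1$, use the staircase-vanishing lemma to kill every term except the top one (coefficient $1$), and then extract the factor $b!$ from the observation that $\sgn(\sigma)\sigma$ fixes $Q_2X_1$ for $\sigma\in S_{B_1}$. Both approaches ultimately rest on this same $S_{B_1}$-stabilizer observation; the paper uses it to count contributions to a leading monomial, you use it to rewrite the $S_{A\cup B_1}$-sum as $b!$ times a coset sum. Your route is more structural (alternation-operator machinery in place of root-chasing) and makes the content of \cref{rem:finite characteristic} --- that the coset sum equals $P$ on the nose, with no $b!$ --- drop out as a by-product rather than a separate remark.

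One organizational comment: your step~(3) as written is slightly circular, since the ``staircase argument'' applies to the full alternation $\mathfrak a$, not to a bare coset sum. The clean fix is to swap the order of (2) and (3): first compute $\mathfrak a\bigl(\Delta_{B_1}(\underline{Y})\,Y_1\prod_{j\in B_2}(Y_1-Y_j)\bigr)$ directly --- the staircase lemma kills all but the $Y_1^{b}$ term, and then $\mathfrak a(\Delta_{B_1}Y_1^{b}) = \mathfrak a\bigl(\mathfrak a_{S_{B_1}}(Y_1^{b}Y_2^{b-1}\cdots)\bigr) = b!\,\mathfrak a(Y^{\delta}) = b!\,\Delta_{A\cup B_1}$ --- and only \emph{then} read off the coset statement by dividing by $b!$. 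This avoids the false start in your middle paragraph.
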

\begin{proof}
We have
$$Q_2(\underline{X})=\Delta_{B_1}(\underline{X}^2)\Delta_{A \cup B_2}(\underline{X}^2) \prod_{i \in A \cup B_2} X_i=\Delta_{B_1}(\underline{X}^2)\cdot\Delta_{B_2}(\underline{X}^2)\cdot\prod_{i\in B_2}(X_1^2-X_i^2)\cdot X_1\cdot\prod_{i\in  B_2} X_i$$
and so 
\begin{align*}
    \sum_{\sigma \in S_{A \cup B_1}}\operatorname{sgn}(\sigma)\sigma(Q_2 X_1)&=\sum_{\sigma \in S_{A \cup B_1}}\operatorname{sgn}(\sigma)\sigma(\Delta_{B_1}(\underline{X}^2)\cdot\Delta_{B_2}(\underline{X}^2)\cdot X_1^2\cdot\prod_{i\in B_2}(X_1^2-X_i^2)\cdot\prod_{i\in  B_2} X_i)\\
    &=\Delta_{B_2}(\underline{X}^2)\cdot\prod_{i\in  B_2} X_i\cdot\sum_{\sigma \in S_{A \cup B_1}}\operatorname{sgn}(\sigma)\sigma(\Delta_{B_1}(\underline{X}^2)\cdot X_1^2\cdot\prod_{i\in B_2}(X_1^2-X_i^2)).
\end{align*}
In order to see that this is a multiple of $P(\underline{X})=\Delta_{A \cup B_1}(\underline{X}^2)\Delta_{B_2}(\underline{X}^2)\prod_{i \in B_2}X_i$ we compare the roots. $P$ is zero in the following cases:
\begin{itemize}
    \item $X_i=0$ for some $i\in B_2$. In this case our sum is 0 as well because of the factor $\prod_{i\in  B_2} X_i$.
    \item $X_i^2=X_j^2$ for some $i,j\in B_2$. 
    In this case our sum is 0 as well because of the factor $\Delta_{B_2}(\underline{X}^2)$.
    \item $X_i^2=X_j^2$ for some $i,j\in A\cup B_1$. 
    In this case, we show that our sum is 0 as well because the factor $\sum_{\sigma \in S_{A \cup B_1}}\operatorname{sgn}(\sigma)\sigma(\Delta_{B_1}(\underline{X}^2)\cdot X_1^2\cdot\prod_{k\in B_2}(X_1^2-X_k^2))$ is 0. 
    Observe that a summand $\operatorname{sgn}(\sigma)\sigma(\Delta_{B_1}(\underline{X}^2)\cdot X_1^2\cdot\prod_{k\in B_2}(X_1^2-X_k^2))$ can only be non-zero if $\sigma(1)\in\{i,j\}$ (i.e. $X_{\sigma(1)}\in\{X_i,X_j\}$), because otherwise $i,j\in\sigma(B_1)$ and therefore $\sigma(\Delta_{B_1}(\underline{X}^2))=\Delta_{\sigma(B_1)}$ is 0 on $X_i^2=X_j^2$. \\
    Let now $\sigma(1)=i$. In this case a summand might be non-zero. However, applying the transposition $(i~j)$ to the summand will change the sign but leave the rest unchanged, since $X_i$ and $X_j$ only appear quadratically and $X_i^2=X_j^2$. Adding all summands will therefore yield 0 as well.
\end{itemize}
Since $\sum_{\sigma \in S_{A \cup B_1}}\operatorname{sgn}(\sigma)\sigma Q_2 X_1 =0$ or $\deg(P)=\deg(\sum_{\sigma \in S_{A \cup B_1}}\operatorname{sgn}(\sigma)\sigma Q_2 X_1 )$, the sum has to be a scalar multiple of $P$, i.e. equal to $C\cdot P$ for some $C\in\RR$. 
With respect to the lexicographic monomial order, the polynomial $P$ is monic.
Thus, $C$ is the leading coefficient of the sum. 
We will ignore the common factor $\Delta_{B_2}(\underline{X}^2)\cdot\prod_{k\in  B_2} X_k$, as it does not change the leading coefficient. \\
In the leading monomial, the exponent of $X_1$ is maximal and only those summands contribute for which $\sigma(1)=1$, because in each summand the monomial $X_{\sigma(1)}$ has maximal exponent: \\
The variable $X_i$, for $i\in\sigma(B_1)$, appears only in $\sigma(\Delta_{B_1}(\underline{X}^2))$. 
It appears in $b-1$ factors, namely in one for each element in $\sigma(B_1)\setminus \{i\}$. 
It always appears quadratically, so its maximal exponent will be $2(b-1)$. 
On the other hand, $X_{\sigma(1)}$ appears in the factor $\sigma(X_1^2)$ and in the factor $\sigma(\prod_{k\in B_2}(X_1^2-X_k^2))$. 
$B_2$ is a set of size $b-1$, so the exponent of $X_{\sigma(1)}$ will be $2+2(b-1)=2b>2(b-1)$. \\
This means there are $\vert B_1\vert!= b!$ summands contributing to the leading monomial, namely those summands corresponding to permutations from $S_{B_1}$, because $\sigma(1)=1$ is the only determining condition. 
For example $\operatorname{id}
\in S_{B_1}$ contributes and yields the summand
$$
    \Delta_{B_1}(\underline{X}^2)\cdot X_1^2\cdot\prod_{k\in B_2}(X_1^2-X_k^2)=X_1^{2b}\cdot X_2^{2b-2}\cdot\ldots\cdot X_{b}^2\cdot X_{b+1}^0+R,
$$
isolating the leading monomial. This monomial also exists after applying $\sigma\in S_{B_1}$, possibly with different sign. 
So let us consider the leading monomial of $$\sigma(\Delta_{B_1}(\underline{X}^2)\cdot X_1^2\cdot\prod_{k\in B_2}(X_1^2-X_k^2))=\sigma(\Delta_{B_1}(\underline{X}^2))\cdot X_1^2\cdot\prod_{k\in B_2}(X_1^2-X_k^2).$$
For each factor $(X_i^2-X_j^2)$, $X_i^2$ contributes to the leading monomial if $i<j$, and otherwise $(-X_j^2)$ does. Therefore we get one factor $(-1)$ for each pair $i<j$ such that $\sigma(i)>\sigma(j)$. This is exactly the definition of $\sgn(\sigma)$. Thus, the monomial $X_1^{2b}\cdot X_2^{2b-2}\cdot\ldots\cdot X_{b}^2\cdot X_{b+1}^0$ appears in each summand of $\sum_{\sigma\in S_{B_1}}\sgn(\sigma)\cdot\sigma(\Delta_{B_1}(\underline{X}^2)\cdot X_1^2\cdot\prod_{k\in B_2}(X_1^2-X_k^2))$ with positive sign and therefore has the coefficient $C=b!$.
\end{proof}

\section{} \label{appendix:B Code}
We used the following SAGE \cite{sagemath} code to define Specht ideals and to explore the inclusion relation of the ideals and their radicality.
\begin{tiny}
\begin{lstlisting}[language=Python,caption={Code to calculate Specht ideals}]
n = 6
R = PolynomialRing(QQ,n,'x')
Sn = SymmetricGroup(range(n))

# The function returns the set of diparititions of n as a list
def dipartitions(n):
    ls = []    
    for k in range(int(n/2+1)):
        for p in list(Partitions(k)):
            for q in list(Partitions(n-k)):
                if p != q:
                    if (q,p) not in ls:
                        ls.append((p,q))
                if p == q:
                    ls.append((p,q,'+'))
                    ls.append((p,q,'-'))    
    return ls

# The function returns a S-Specht polynomial for a partition p  written as a list
def SnSpechtPolynomial(p,n=0):    
    if p == []:
        return R(1)    
    q = Partition(p).conjugate()
    f = R(1)
    r = n
    for l in range(len(q)):
        for i in range(q[l]-1): 
            for j in range(i+1,q[l]):
                f *= R.gens()[r+i]-R.gens()[r+j]
        r += q[l]    
    return f

# The function returns a B-Specht polynomial using the S-Specht polynomial function
def BnSpechtPolynomial(b):
    f = R(1)    
    if len(b[0]) > 1:
        f *= SnSpechtPolynomial(b[0])(*(i^2 for i in R.gens()))    
    if len(b[1]) > 1:
        f *= SnSpechtPolynomial(b[1],sum(b[0]))(*(i^2 for i in R.gens()))    
    for i in range(sum(b[0]),sum(b[0])+sum(b[1])):
        f *= R.gens()[i]    
    return f

# The function returns a B-Specht polynomial to (\mu,\lambda) for input (\lambda,\mu)
def BnSpechtPolynomialReverse(b):
    f = R(1)    
    if len(b[0]) > 1:
        f *= SnSpechtPolynomial(b[0])(*(i^2 for i in R.gens()))    
    if len(b[1]) > 1:
        f *= SnSpechtPolynomial(b[1],sum(b[0]))(*(i^2 for i in R.gens()))    
    for i in range(sum(b[0])):
        f *= R.gens()[i]    
    return f
    

# If \lambda \neq \mu the function returns B-Specht polynomials to 
# (\lambda,\mu) and (\mu,\lambda)
# If \lambda = \mu it the functions returns the pair of D-Specht polynomials to
# {\lambda,+} and {\lambda,-}
def DnSpechtPolynomials(b):
    if b[1] != b[0]:
        c = [b[1],b[0]]        
        f1 = BnSpechtPolynomial(b) 
        f2 = BnSpechtPolynomial(c)        
        return [f1,f2]    
    else:
        f1 = BnSpechtPolynomial(b)
        f2 = BnSpechtPolynomialReverse(b)    
        return[f1+f2,f1-f2]

# Defines the ideal of D-Specht polynomials to a dipartition b as 
# the symmetric group orbit of the previously defined D-Specht polynomials
# Return is the D-Specht ideal if \lambda \neq \mu
# Return is the pair of D-Specht ideals for \pm if \lambda = \mu
def DnSpechtIdeal(b):
    Sn = SymmetricGroup(range(sum(b[0])+sum(b[1])))
    I1 = ideal([DnSpechtPolynomials(b)[0](*i(R.gens())) for i in Sn])
    I2 = ideal([DnSpechtPolynomials(b)[1](*i(R.gens())) for i in Sn])    
    if b[1] != b[0]:    
        I = I1 + I2
        return I    
    else:  
        return [I1,I2]
\end{lstlisting}
\end{tiny}
\end{appendix}

\end{document}